\documentclass[12pt]{amsart}
\usepackage{amsmath,amsfonts,amssymb,amscd,amsthm}
\usepackage {pstricks}
\usepackage{pstricks,pst-node}
\usepackage{tikz}

\numberwithin{equation}{section}

\newtheorem{lemma}[equation]{Lemma}
\newtheorem*{theorem*}{Theorem}
\newtheorem*{lemma*}{Lemma}
\newtheorem{theorem}[equation]{Theorem}
\newtheorem{proposition}[equation]{Proposition}
\newtheorem{definition}[equation]{Definition}
\newtheorem{corollary}[equation]{Corollary}
\theoremstyle{remark}
\newtheorem{remark}[equation]{Remark}

%\numberwithin{equation}{section}
%\newcounter{subnumber}
%\numberwithin{subnumber}{equation}
%\def\Subeq#1#2{\refstepcounter{subnumber}
%                 \ifx#1*\relax\else\label{#1}\fi
%                 $$#2\leqno{(\thesubnumber})$$}

\DeclareMathOperator{\Id}{Id}
\DeclareMathOperator{\im}{im}

\DeclareMathOperator{\End}{End}

\DeclareMathOperator{\Rad}{Rad}

\newcommand{\C}{{\mathbb C}}

\newcommand{\Z}{{\mathbb Z}}

\newcommand{\CC}{{\mathcal C}}
\newcommand{\CB}{{\mathcal B}}

\newcommand{\CS}{{\mathcal S}}
\newcommand{\CT}{{\mathcal T}}

\newcommand{\fsl}{\mathfrak{sl}}

\newcommand{\Sym}{{\mathrm{Sym}}}

\newcommand{\GL}{{\text{GL}}}
\newcommand{\Or}{{\text{O}}}

\newcommand{\Sp}{{\text{Sp}}}

\newcommand{\inv}{^{-1}}

\newcommand{\lexp}[2]{\kern\scriptspace\vphantom{#2}^{#1}\kern-\scriptspace#2}

\newcommand\be{\begin{equation}}
\newcommand\ee{\end{equation}}
\newcommand\ot{\otimes}
\newcommand\ve{\varepsilon}
\newcommand\lr{{\longrightarrow\;}}

\newcommand\half{{[\frac{n+1}{2}]}}

\setlength{\textheight}{594pt}
\setlength{\textwidth}{312pt}
%\linespread{1.3}%spacing
\textwidth 14.3cm

\begin{document}

\title [Second fundamental theorem]{The second fundamental theorem of \\ invariant theory for the orthogonal group}

\author{Gustav~Lehrer and Ruibin~Zhang}
\address{School of Mathematics and Statistics,
University of Sydney, NSW 2006, Australia.}
\email{gustav.lehrer@sydney.edu.au, ruibin.zhang@sydney.edu.au}
\subjclass[2010]{20G05,16R30,17B37}
\begin{abstract}Let $V=\C^n$ be endowed with an orthogonal form and $G=\Or(V)$
be the corresponding orthogonal group. Brauer showed in 1937 that there is
a surjective homomorphism $\nu:B_r(n)\to\End_G(V^{\otimes r})$,
where $B_r(n)$ is the $r$-string Brauer
algebra with parameter $n$. However the kernel of
$\nu$ has remained elusive. In this paper we show that,
in analogy with the case of $\GL(V)$, for $r\geq n+1$, $\nu$ has kernel
which is generated by a single idempotent element $E$, and we give a simple explicit formula for $E$.
Using the theory of cellular algebras, we show how $E$ may be used to determine the
multiplicities of the irreducible representations of $\Or(V)$ in $V^{\ot r}$.
We also show how our results extend to the case where $\C$ is replaced by
an appropriate field of positive characteristic, and comment on quantum analogues of our results.
\end{abstract}
\maketitle

\section{Introduction}

Let $K$ be a field of characteristic zero, and let $V=K^n$ be an
$n$-dimensional vector space
with a non-degenerate symmetric bilinear form $(-,-)$, and assume that with respect to some basis
of $V$, the form has matrix equal to the identity matrix. Equivalently, there is a
basis $\{b_1,\dots,b_n\}$ such that $(b_i,b_j)=\delta_{ij}$; such a basis is called orthonormal.
The {\em orthogonal group} $\Or(V)$ is the isometry group of this form, defined as
$\Or(V)=\{g\in\GL(V)\mid (gv,gw)=(v,w)\;\forall\; v,w\in V\}$. In \cite{Br}, Brauer showed that
the first fundamental theorem of invariant theory for $\Or(V)$ implies that there is a
surjective map $\nu$ from the Brauer algebra $B_r(n)$ over $K$ to $\End_{\Or(V)}(V^{\otimes r})$, but
the fact \cite{DHW, RS} that $B_r(n)$ is semisimple if and only if $r\leq n+1$, has complicated the determination
of the kernel of $\nu$, and therefore limited the use of this fact.

In this work we determine $\ker(\nu)$. More specifically, we show that $\ker(\nu)$ is
generated as an ideal of $B_r(n)$ by a single
idempotent $E$, which we describe explicitly. Using the fact that $B_r(n)$ has a cellular structure
\cite{GL96}, we show how this fact may be used to illuminate the Schur-Weyl duality
between the actions of $\Or(V)$ and $B_r(n)$ on $V^{\otimes r}$, by using $E$ to describe the
radicals of the canonical forms on the relevant cell modules of $B_r(n)$.

The special case $n=3$ has been treated in \cite{LZ2,LZ3}, as has its quantum analogue
for the Birman-Murukami-Wenzl (BMW) algebra \cite{BW}. This latter work was done in the context of
the $3$-dimensional irreducible representation of $\fsl_2$. The case
of the symplectic group $\Sp_{2n}(K)$ and its quantum analogue, which seems rather
different from the present case, has been treated by Hu and Xiang in \cite{HX}.

\protect\section{The Brauer algebra}

\protect\subsection{Generalities}

Let $K$ be a field of characteristic zero and
let $\delta\in K$. For any positive integer $r$, the Brauer algebra
$B_r(\delta)$ \cite{Br} is the $K$-algebra with basis the set of diagrams with $2r$ nodes,
or vertices, labelled as in Figure \ref{fig3},
in which each node is joined to just one other one.

\begin{figure}[hb]
\begin{center}
\begin{tikzpicture}[scale=1]

%% draw filled dots at required points at bottom
\foreach \x in {1,2,3,6,7}
\filldraw(\x,0) circle (0.1cm);
%% draw filled dots at required points on top
\foreach \x in {1,2,3,6,7}
\filldraw(\x,2) circle (0.1cm);
%% draw straight lines where required
%\foreach \x in {1,4,6,9,11,14}
%\draw (\x,3)--(\x,0);

%% label the points in top row
\draw node[above] at (1,2){1};\draw node[above] at (2,2){2};\draw node[above] at (3,2){3};
\draw node[above] at (6,2){r-1};\draw node[above] at (7,2){r};

%% label the points in bottom row
\draw node[below] at (1,0){r+1};\draw node[below] at (2,0){r+2};\draw node[below] at (3,0){r+3};
\draw node[below] at (6,0){2r-1};\draw node[below] at (7,0){2r};

%% draw curved lines where required, top
%\draw(5,3) .. controls (6,2.2) and (9,2.2) .. (10,3);
%\draw(7,3) .. controls (7.5,2) and (9.5,2) .. (10,3);
%\draw(6,3) .. controls (7,1.5) and (10,1.5) .. (11,3);

%% draw curved lines where required, bottom
%\draw(5,0) .. controls (6,0.8) and (9,0.8) .. (10,0);
%\draw(7,0) .. controls (7.5,1) and (9.5,1) .. (10,0);
%\draw(6,0) .. controls (7,1.5) and (10,1.5) .. (11,0);

%% draw ... between straight lines
\draw(4.5,0) node {\large{$\cdots$}};
\draw(4.5,2) node {\large{$\cdots$}};
%\draw(12.5,1.5) node {\large{$\cdots$}};

\end{tikzpicture}
%\centerline{Figure 1}
\end{center}
\caption{ }
\label{fig3}
\end{figure}

Note that each diagram in $B_r(\delta)$ may be thought of as a graph with
vertices $\{1,\dots,2r\}$ in which each vertex is joined to just one other one.
We will often refer to the `edges' of such a diagram, and speak of `horizontal edges'
and `vertical edges' (the latter also known as `through strings') as respectively
those joining vertices in the same row, or in different rows.

The composite $D_1\circ D_2$ of two diagrams $D_1$ and $D_2$ is
obtained by concatenation
of diagrams, placing $D_1$ above $D_2$, with the intermediate nodes and any free loops being erased.
The product $D_1D_2$ in $B_r(\delta)$ is $\delta^{l(D_1,D_2)}D_1\circ D_2$, where $l(D_1,D_2)$ is
the number of deleted free loops.

We shall need to consider certain special elements of $B_r(\delta)$, which
we now describe.

For $i=1,\dots,r-1$, $s_i$ is the diagram shown in Figure \ref{fig4}.

\begin{figure}[hb]
\begin{center}
\begin{tikzpicture}[scale=1]

%% draw filled dots at required points at bottom
\foreach \x in {1,4,5,6,7,10}
\filldraw(\x,0) circle (0.1cm);
%% draw filled dots at required points on top
\foreach \x in {1,4,5,6,7,10}
\filldraw(\x,2) circle (0.1cm);
%% draw straight lines where required
\foreach \x in {1,4,7,10}
\draw (\x,2)--(\x,0);

%% draw slanted lines where required
\draw (5,2)--(6,0);
\draw (5,0)--(6,2);

%% label the points in top row
\draw node[above] at (1,2){1};\draw node[above] at (5,2){i};
\draw node[above] at (6,2){i+1};\draw node[above] at (10,2){r};

%% label the points in bottom row
\draw node[below] at (1,0){r+1};\draw node[below] at (5,0){r+i};
\draw node[below] at (6,0){r+i+1};\draw node[below] at (10,0){2r};

%% draw curved lines where required, top
%\draw(5,3) .. controls (6,2.2) and (9,2.2) .. (10,3);
%\draw(7,3) .. controls (7.5,2) and (9.5,2) .. (10,3);
%\draw(6,3) .. controls (7,1.5) and (10,1.5) .. (11,3);

%% draw curved lines where required, bottom
%\draw(5,0) .. controls (6,0.8) and (9,0.8) .. (10,0);
%\draw(7,0) .. controls (7.5,1) and (9.5,1) .. (10,0);
%\draw(6,0) .. controls (7,1.5) and (10,1.5) .. (11,0);

%% draw ... between straight lines
\draw(2.5,1) node {\large{$\cdots$}};
\draw(8.5,1) node {\large{$\cdots$}};
%\draw(12.5,1.5) node {\large{$\cdots$}};

\end{tikzpicture}
%\centerline{Figure 2}
\end{center}
\caption{ }
\label{fig4}
\end{figure}

For each pair $i,j$ with $1\leq i<j\leq r$ define the diagram $e_{i,j}$
as depicted in Figure \ref{fig5}.

\begin{figure}[hb]
\begin{center}
\begin{tikzpicture}[scale=1]

%% draw filled dots at required points at bottom
\foreach \x in {1,4,5,6,9,10,11,14}
\filldraw(\x,0) circle (0.1cm);
%% draw filled dots at required points on top
\foreach \x in {1,4,5,6,9,10,11,14}
\filldraw(\x,3) circle (0.1cm);
%% draw straight lines where required
\foreach \x in {1,4,6,9,11,14}
\draw (\x,3)--(\x,0);

%% label the points in top row
\draw node[above] at (1,3){1};\draw node[above] at (4,3){i-1};\draw node[above] at (5,3){i};
\draw node[above] at (6,3){i+1};\draw node[above] at (9,3){j-1};\draw node[above] at (10,3){j};
\draw node[above] at (11,3){j+1};\draw node[above] at (14,3){r};

%% label the points in bottom row
\draw node[below] at (1,0){r+1};\draw node[below] at (5,0){r+i};\draw node[below] at (10,0){r+j};
\draw node[below] at (14,0){2r};

%% draw curved lines where required, top
\draw(5,3) .. controls (6,2.2) and (9,2.2) .. (10,3);
%\draw(7,3) .. controls (7.5,2) and (9.5,2) .. (10,3);
%\draw(6,3) .. controls (7,1.5) and (10,1.5) .. (11,3);

%% draw curved lines where required, bottom
\draw(5,0) .. controls (6,0.8) and (9,0.8) .. (10,0);
%\draw(7,0) .. controls (7.5,1) and (9.5,1) .. (10,0);
%\draw(6,0) .. controls (7,1.5) and (10,1.5) .. (11,0);

%% draw ... between straight lines
\draw(2.5,1.5) node {\large{$\cdots$}};
\draw(7.5,1.5) node {\large{$\cdots$}};
\draw(12.5,1.5) node {\large{$\cdots$}};

\end{tikzpicture}
%\centerline{Figure 3}
\end{center}
\caption{ }
\label{fig5}
\end{figure}

The following facts are all well known.
\begin{lemma}\label{lem:brprops}
\begin{enumerate}
\item The elements $s_1,\dots,s_{r-1}$ generate a subalgebra of $B_r(\delta)$, isomorphic to
the group algebra $K\Sym_r$ of the symmetric group $\Sym_r$.

\item The elements $e_{i,j}$ satisfy $e_{i,j}^2=\delta e_{i,j}$, and if
$i,j,k$ and $\ell$ are distinct, $e_{i,j}$ commutes
with $e_{k,\ell}$.

\item If we write $e_i=e_{i,i+1}$ for $i=1,\dots,r-1$,
$B_r(\delta)$ has a presentation as $K$-algebra with generators
$\{s_1,\dots,s_{r-1};e_1,\dots,e_{r-1}\}$, and relations
$s_i^2=1,\; e_i^2=\delta e_i,\; s_ie_i=e_is_i=e_i$ for all $i$,
$s_is_j=s_js_i,\;s_ie_j=e_js_i,\;e_ie_j=e_je_i$ if $|i-j|\geq 2$,
and $s_is_{i+1}s_i=s_{i+1}s_is_{i+1},\; e_ie_{i\pm 1}e_i=e_i$ and
$s_ie_{i+1}e_i=s_{i+1}e_i,\; e_{i+1}e_is_{i+1}=e_{i+1}s_{i}$
and $e_is_{i\pm 1}e_i=e_i$ for all applicable $i$.
\end{enumerate}
\end{lemma}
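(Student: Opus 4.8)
The plan is to deduce all three parts from the diagram calculus described above, the only genuine work being the completeness half of part (iii). For part (i) I would observe that a diagram all of whose edges are vertical is exactly the graph of a permutation of $\{1,\dots,r\}$, and that concatenating two such diagrams never produces a free loop; hence these diagrams span a subalgebra on which multiplication is ordinary composition of permutations, and, $s_i$ being the graph of the transposition $(i,i+1)$, the assignment $(i,i+1)\mapsto s_i$ extends to an algebra map $K\Sym_r\to B_r(\delta)$ which is a linear isomorphism onto the span of the permutation diagrams, since distinct permutations have distinct graphs. For part (ii), stacking $e_{i,j}$ on a copy of itself closes the lower horizontal edge of the upper diagram and the upper horizontal edge of the lower diagram into one free loop while the remaining edges reproduce $e_{i,j}$, giving $e_{i,j}^2=\delta e_{i,j}$; and when $i,j,k,\ell$ are pairwise distinct the horizontal strings of $e_{i,j}$ and $e_{k,\ell}$ occupy disjoint columns, so $e_{i,j}\circ e_{k,\ell}$ and $e_{k,\ell}\circ e_{i,j}$ are the same diagram with no loop deleted. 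Both assertions are immediate from the pictures.

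For part (iii) the argument has an easy half and a substantive half. The easy half is to verify that the diagrams $s_i$ and $e_i:=e_{i,i+1}$ satisfy every listed relation by drawing both sides and matching edges while recording deleted loops: $s_i^2=1$, $s_ie_i=e_is_i=e_i$ and the various commutations are visible at a glance, $e_i^2=\delta e_i$ is the case $j=i+1$ of (ii), and the braid relation together with $e_ie_{i\pm1}e_i=e_i$, $s_ie_{i+1}e_i=s_{i+1}e_i$, $e_{i+1}e_is_{i+1}=e_{i+1}s_i$ and $e_is_{i\pm1}e_i=e_i$ are each a short picture computation. Since $\{s_i,e_i\}$ generates $B_r(\delta)$ (every diagram being obtained by composing the $e_i$ and the $s_i$ row by row, a standard reduction), this produces a surjective algebra homomorphism $\pi\colon A\twoheadrightarrow B_r(\delta)$ from the abstractly presented algebra $A$.

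The substantive half, which I expect to be the main obstacle, is injectivity of $\pi$, equivalently $\dim_K A\le\dim_K B_r(\delta)=(2r-1)!!$. The plan is to exhibit, using only the defining relations of $A$, a spanning set of that cardinality: one pushes the $e_i$ into a standard position with the braid and far-commutation relations, collapses products of $e$'s with $e_ie_{i\pm1}e_i=e_i$ and the mixed relations $s_ie_{i+1}e_i=s_{i+1}e_i$, $e_{i+1}e_is_{i+1}=e_{i+1}s_i$, and thereby rewrites an arbitrary word as a monomial determined by a pair of partial matchings of $\{1,\dots,r\}$ together with a distinguished coset representative in $\Sym_r$ — exactly $(2r-1)!!$ possibilities, matching the number of Brauer diagrams. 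The delicate point is to show this rewriting always terminates and loses no element, i.e. that the given relations really suffice. As all of this is classical, I would present the bookkeeping only in outline, referring for the full verification to the standard treatments (\cite{Br}, \cite{GL96}).
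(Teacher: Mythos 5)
The paper offers no argument at all for this lemma: it is stated as a collection of well-known facts and used without proof, so there is no ``paper's route'' to compare yours against. Your outline is correct and is the standard one. Parts (i) and (ii) are indeed immediate from the diagram calculus exactly as you say (permutation diagrams compose without creating loops, stacking $e_{i,j}$ on itself creates exactly one loop, and disjoint columns give commutation). For (iii), your division into the easy half (checking that the diagrams $s_i$, $e_i$ satisfy the listed relations, giving a surjection $\pi\colon A\twoheadrightarrow B_r(\delta)$ once one knows the diagrams generate) and the substantive half (the bound $\dim_K A\le (2r-1)!!$) is the right structure, and rewriting an arbitrary word in the abstract algebra into a normal form indexed by a pair of partial matchings plus a permutation of the unmatched points is the standard way to get that bound.

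The one thin spot is the one you flag yourself: the termination and completeness of the rewriting, i.e.\ that the listed relations really suffice to reach the normal form, is the entire mathematical content of (iii), and your citations do not quite cover it --- \cite{Br} defines the algebra diagrammatically and proves no presentation, and \cite{GL96} works with the diagram basis rather than generators and relations. Since the paper itself treats the presentation as folklore, deferring this bookkeeping is defensible, but if you want the proof to be self-contained you would either have to carry out the induction (typically on the number of $e_i$'s in a word, pushing occurrences of $e$'s leftward and invoking $e_ie_{i\pm1}e_i=e_i$ and the mixed relations to cancel excess ones) or cite a source that actually proves the presentation of the Brauer algebra by these generators and relations rather than one that merely defines the algebra.
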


\subsection{Some special notation}\label{ss:alt}

For positive integers $k,l$ with $k\leq l$ define $[k,l]:=\{k,k+1,k+2,\dots,l\}$.
For any subset $S\subseteq[1,r]$, $\Sym(S)$ is the subgroup of $\Sym_r$ which fixes
each element of $[1,r]\setminus S$. For any subgroup $H\leq\Sym_r$, define
$a(H)=\sum_{h\in H}\ve(h)h\in B_r(\delta)$, where $\ve$ is the alternating character of
$\Sym_r$. This is referred to as the `alternating element'
of $KH$. The following elementary observation is well known but very useful.

\begin{lemma}\label{lem:capzero}
Suppose the subgroup $H\leq\Sym_r$ contains the simple transposition $s_{ij}$
which interchanges $i$ and $j$.
If $e_{i,j}$ is the element defined above, then $e_{i,j}a(H)=a(H)e_{i,j}=0$.
\end{lemma}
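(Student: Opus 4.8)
The plan is to reduce the identity to the single relation $s_{ij}\,a(H)=a(H)\,s_{ij}=-a(H)$, which holds because $s_{ij}\in H$ and $\ve(s_{ij})=-1$: right-multiplication of the alternating element $a(H)=\sum_{h\in H}\ve(h)h$ by an element of $H$ permutes the summands and picks up the sign of that element, and similarly on the left. So the only genuine input needed from the Brauer algebra structure is the pair of relations $e_{i,j}s_{ij}=e_{i,j}$ and $s_{ij}e_{i,j}=e_{i,j}$, i.e. that attaching a transposition between the two nodes joined by a horizontal cup (or cap) does nothing. In the generator--relation language of Lemma \ref{lem:brprops}(iii) this is exactly $s_ie_i=e_is_i=e_i$ when $j=i+1$; for general $i<j$ it follows by conjugating by a permutation carrying $\{i,i+1\}$ to $\{i,j\}$, or directly from the diagram calculus, since composing the diagram $e_{i,j}$ with the transposition diagram $s_{ij}$ merely swaps the two endpoints of a horizontal edge and reconnects them to the same pair, producing $e_{i,j}$ with no closed loops.

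Granting those two relations, the computation is immediate:
\begin{equation}
e_{i,j}\,a(H)=e_{i,j}\,s_{ij}\,a(H)=e_{i,j}\,\bigl(-a(H)\bigr)=-\,e_{i,j}\,a(H),
\end{equation}
so $2\,e_{i,j}\,a(H)=0$, and since $K$ has characteristic zero we conclude $e_{i,j}\,a(H)=0$. The identity $a(H)\,e_{i,j}=0$ is obtained symmetrically, using $a(H)\,s_{ij}=-a(H)$ together with $s_{ij}\,e_{i,j}=e_{i,j}$.

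There is essentially no obstacle here; the statement is elementary, as the paper says. The only point requiring a word of care is the verification that $e_{i,j}s_{ij}=s_{ij}e_{i,j}=e_{i,j}$ for all $i<j$ rather than merely for $j=i+1$; I would dispose of this by the diagram-concatenation description of the product in $B_r(\delta)$, noting that $s_{ij}$ is the diagram with a through-string for every vertex except $i$ and $j$, whose through-strings cross, and that stacking it above or below $e_{i,j}$ re-routes the horizontal edge of $e_{i,j}$ to join the same two top (resp. bottom) nodes, with no free loops deleted, hence the coefficient $\delta^{0}=1$. Everything else is the one-line sign argument above.
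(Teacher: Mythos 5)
Your proposal is correct and follows essentially the same route as the paper: both arguments rest on the two facts $s_{ij}a(H)=a(H)s_{ij}=-a(H)$ and $e_{i,j}s_{ij}=s_{ij}e_{i,j}=e_{i,j}$, the paper packaging them as $a(H)=\tfrac12(1-s_{ij})a(H)=\tfrac12 a(H)(1-s_{ij})$ while you phrase it as $2\,e_{i,j}a(H)=0$ and divide by $2$ (the same use of characteristic zero). Your extra verification of $e_{i,j}s_{ij}=e_{i,j}$ for general $i<j$ via diagram concatenation is a harmless elaboration of what the paper simply asserts.
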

\begin{proof}
Since $s_{ij}a(H)=-a(H)=a(H)s_{ij}$, we have $a(H)=\frac{1}{2}(1-s_{ij})a(H)
=\frac{1}{2}a(H)(1-s_{ij})$. But $e_{i,j}s_{ij}=s_{ij}e_{i,j}=e_{i,j}$, and the
result is clear.
\end{proof}

\section{The fundamental theorems of invariant theory for $O(n)$}

\subsection{First (linear) formulation}

For any positive integer $t$ the space $V^{\otimes t}$ (also denoted $T^t(V)$)
is an $\Or(V)$-module in the usual way: $g(v_1\ot\dots\ot v_t)=gv_1\ot gv_2\ot\dots\ot gv_t$.
Moreover the given form on $V$ provides a non-degenerate symmetric bilinear form $[-,-]$
on $V^{\ot t}$, given by $[v_1\ot\dots\ot v_t,w_1\ot\dots\ot w_t]:=\prod_{i=1}^t(v_i,w_i)$,
which permits the identification of $V^{\ot t}$ with its dual space ${V^{\ot t}}^*$.

The space of invariants $({{V^{\ot t}}^*})^{\Or(V)}$ is defined as the space of linear functions
which are constant on $\Or(V)$ orbits.
One formulation of the first fundamental theorem of invariant theory for $\Or(V)$ \cite{W, ABP}
is as follows (see \cite{Lo,LZZ,P1,P2}, \cite[Proposition 21]{Ri}).

\begin{theorem}\label{thm:orthfft} The space $({{V^{\ot t}}^*})^{\Or(V)}$ is zero if $t$ is odd.
If $t=2r$ is even, then
any element of $({{V^{\ot t}}^*})^{\Or(V)}$ is a linear combination of maps of the form
$$
\gamma_D:v_1\ot\dots\ot v_{2r}\mapsto \prod_{(i,j)\text{ is an edge of $D$}}(v_{i},v_j),
$$
where $D$ is a diagram in $B_r(n)$.
\end{theorem}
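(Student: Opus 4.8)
\emph{Proof strategy.} The odd case is immediate: $-\Id\in\Or(V)$ acts on $V^{\ot t}$ as $(-1)^{t}$, so when $t$ is odd every invariant functional equals its own negative and hence vanishes. Assume then that $t=2r$. By the universal property of the tensor product, an $\Or(V)$-invariant element of $({V^{\ot 2r}})^{*}$ is the same thing as an $\Or(V)$-invariant $K$-multilinear form $f\colon V^{2r}\to K$, and such an $f$, viewed as a regular function on $V^{\oplus 2r}$, is a polynomial invariant of multidegree $(1,1,\dots,1)$ in the vector variables $v_{1},\dots,v_{2r}$. Since $\Or(V)$ acts diagonally it preserves this multigrading, so the theorem follows from the \emph{polynomial} first fundamental theorem: for every $p$, the invariant ring $K[V^{\oplus p}]^{\Or(V)}$ is generated by the quadratic invariants $q_{ij}\colon(v_{1},\dots,v_{p})\mapsto(v_{i},v_{j})$, $1\le i\le j\le p$. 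Granting this with $p=2r$ and extracting from a polynomial in the $q_{ij}$ its component of multidegree $(1,\dots,1)$, one is left with a $K$-linear combination of monomials $\prod_{k=1}^{r}q_{i_{k}j_{k}}$ in which the pairs $\{i_{k},j_{k}\}$ form a perfect matching of $\{1,\dots,2r\}$; such a monomial is exactly the functional $\gamma_{D}$ attached to the Brauer diagram $D\in B_{r}(n)$ whose edges are those pairs, and each $\gamma_{D}$ is manifestly $\Or(V)$-invariant. (One may assume $K$ algebraically closed: the $\gamma_{D}$ are finite in number and defined over $K$, and $\Or(V)(K)$ is Zariski dense in $\Or(V)(\overline K)$ --- $\overline K$ is infinite and both components of $\Or(V)$ are unirational with $K$-points --- so it suffices to show that the $\gamma_{D}$ span the $\Or(V)(\overline K)$-invariants of $(V\otimes\overline K)^{\ot 2r}$, the descent to $K$ being routine linear algebra.)

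To prove the polynomial statement, identify $V^{\oplus p}$ with the space $M$ of $n\times p$ matrices whose columns are $v_{1},\dots,v_{p}$, let $\CS$ denote the affine space of symmetric $p\times p$ matrices, and consider the \emph{Gram map} $\Phi\colon M\to\CS$, $A\mapsto A^{\top}A$. Its coordinate functions are precisely the $q_{ij}$ and it is constant on $\Or(V)$-orbits, so it factors as $\Phi=\psi\circ\pi$ through the categorical quotient $\pi\colon M\to M/\!\!/\Or(V)=\mathrm{Spec}\,K[M]^{\Or(V)}$, which exists because $\Or(V)$ is linearly reductive in characteristic zero. Now I would establish two facts. (i) The image of $\Phi$ is the closed subvariety $Z\subseteq\CS$ of symmetric matrices of rank $\le\min(n,p)$: the inclusion $\subseteq$ is clear from $\rank(A^{\top}A)\le\rank A$, and conversely over $K=\overline K$ a symmetric matrix of rank $s$ is congruent to $D:=\mathrm{diag}(I_{s},0)$, hence equals $(DC)^{\top}(DC)$ for a suitable matrix. (ii) $\Phi$ separates closed $\Or(V)$-orbits: by a standard Hilbert--Mumford computation the closed orbits in $M$ are exactly those of tuples spanning a non-degenerate subspace of $V$, and two such tuples with equal Gram matrix are carried one onto the other by an element of $\Or(V)$, by Witt's extension theorem. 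From (i) and (ii), $\psi\colon M/\!\!/\Or(V)\to Z$ is a bijective morphism; since $Z$ is a symmetric determinantal variety, hence normal, Zariski's main theorem (in characteristic zero) shows $\psi$ is an isomorphism. Therefore $K[M]^{\Or(V)}=\psi^{*}K[Z]$ is generated by the restrictions of the coordinate functions, i.e.\ by the $q_{ij}$, which is the polynomial first fundamental theorem.

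The main obstacle is this last step --- upgrading the orbit-theoretic statements (i) and (ii) to the ring equality $K[M]^{\Or(V)}=K[Z]$. The delicate point is the behaviour of $\Phi$ over a degenerate symmetric matrix $S$: there the $v_{i}$ span a subspace of $V$ on which the form degenerates, and $\Or(V)$ no longer acts transitively on $\Phi^{-1}(S)$ but only with a unique closed orbit inside it, so a naive "constant on fibres'' argument does not directly bound the invariants. The proof above absorbs this into two classical inputs: the linear reductivity of $\Or(V)$ and the normality of the determinantal variety $Z$. An alternative and equally classical route to the polynomial first fundamental theorem --- and hence to Theorem~\ref{thm:orthfft} --- is the Weyl unitarian trick: reduce $\Or_{n}(\C)$-invariants to $\Or_{n}(\R)$-invariants by Zariski density and compute the latter by averaging over the compact group $\Or_{n}(\R)$, where the required transitivity and spanning assertions become transparent.
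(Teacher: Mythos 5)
Your argument is essentially correct, but note that the paper does not prove Theorem \ref{thm:orthfft} at all: it is quoted as a known result with references to \cite{W,ABP,P1,P2,Ri}, so there is no ``paper proof'' to compare against line by line. What you have written is a faithful rendition of one of the standard classical proofs, namely the one in Procesi's book \cite{P2}: reduce the multilinear statement to the polynomial first fundamental theorem by restricting to the multidegree $(1,\dots,1)$ component (legitimate, since each $q_{ij}$ is multihomogeneous, so the subalgebra they generate is multigraded), and prove the polynomial statement by showing the Gram map induces a bijective morphism from the categorical quotient onto the symmetric determinantal variety $Z$ and invoking normality of $Z$ plus Zariski's main theorem. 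The individual steps check out: the odd case via $-\Id$, the identification of closed orbits with tuples of nondegenerate span, the Witt-extension argument (including the point that equal Gram matrices plus nondegeneracy force corresponding linear relations among the two tuples), and the Galois-descent/Zariski-density reduction to $\overline K$. The only substantive external inputs are the normality of symmetric determinantal varieties and the classification of closed orbits, both of which you correctly flag as the load-bearing classical facts; the alternative you mention (Weyl's unitarian trick, which is essentially the route of \cite{W,ABP}) is equally acceptable. In short: the proposal is a complete and correct proof of a statement the paper merely cites.
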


The second fundamental theorem provides a description of all linear relations among these
functions $\gamma_D$. Let us begin by describing some obvious linear
relations. Suppose $r\geq n+1$ (recall that $\dim V=n$).

Let $S$ and $S'$ be disjoint subsets of $[1,2r]$ such that
$|S|=|S'|=n+1$ and $S\cap S'=\emptyset$, and let $\beta$ be any pairing
of the vertices $\{1,\dots,2r\}\setminus (S\amalg S')$.

\begin{definition}\label{def:dpi}
For $\pi\in\Sym_{n+1}$, $S=\{i_1,\dots,i_{n+1}\}$, $S'=\{j_1,\dots,j_{n+1}\}$,
let $D_\pi(S,S',\beta)$ be the Brauer diagram with
edges $\{(i_k,j_{\pi(k)})\mid k=1,2,\dots,n+1\}\amalg\beta$, and denote by
$\gamma_{D_\pi(S,S',\beta)}$ the corresponding linear functional on $V^{\ot 2r}$
as above.

Define $\gamma(S,S',\beta):=\sum_{\pi\in\Sym_{n+1}}\ve(\pi)\gamma_{D_\pi(S,S',\beta)}$.
\end{definition}

The next statement describes some obvious linear relations among the $\gamma_D$.

\begin{lemma}\label{lem:gammainker}
We have, for each $S,S',\beta$ as above, $\gamma(S,S',\beta)=0(\in(V^{\ot 2r})^*$.
\end{lemma}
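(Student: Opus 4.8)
The plan is to evaluate $\gamma(S,S',\beta)$ on an arbitrary decomposable tensor and show the result is always $0$; by linearity this gives $\gamma(S,S',\beta)=0$ in $(V^{\ot 2r})^*$. First I would unwind Definition \ref{def:dpi}: the edge set of $D_\pi(S,S',\beta)$ is the disjoint union of the fixed pairing $\beta$ with the $\pi$-dependent set $\{(i_k,j_{\pi(k)})\mid k=1,\dots,n+1\}$, so
\[
\gamma_{D_\pi(S,S',\beta)}(v_1\ot\dots\ot v_{2r})=\Bigl(\prod_{(a,b)\in\beta}(v_a,v_b)\Bigr)\prod_{k=1}^{n+1}(v_{i_k},v_{j_{\pi(k)}}).
\]
Summing over $\pi$ weighted by $\ve(\pi)$, the $\beta$-factor does not depend on $\pi$ and pulls out, leaving $\sum_{\pi\in\Sym_{n+1}}\ve(\pi)\prod_{k=1}^{n+1}(v_{i_k},v_{j_{\pi(k)}})$, which by the Leibniz expansion is exactly $\det M$, where $M$ is the $(n+1)\times(n+1)$ matrix with entries $M_{k,l}=(v_{i_k},v_{j_l})$.

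The second step is to see that $\det M=0$. The matrix $M$ is a ``mixed Gram matrix'' of the two ordered $(n+1)$-tuples $(v_{i_1},\dots,v_{i_{n+1}})$ and $(v_{j_1},\dots,v_{j_{n+1}})$ of vectors of $V$. Since $\dim V=n<n+1$, the vectors $v_{j_1},\dots,v_{j_{n+1}}$ are linearly dependent, say $\sum_l c_l v_{j_l}=0$ with the $c_l$ not all zero. Bilinearity of $(-,-)$ then gives $\sum_l c_l M_{k,l}=(v_{i_k},\sum_l c_l v_{j_l})=0$ for every $k$, so the columns of $M$ are linearly dependent and $\det M=0$. (Equivalently, expanding all the vectors in the orthonormal basis $\{b_1,\dots,b_n\}$, one gets $M=A^\top B$ with $A,B\in K^{n\times(n+1)}$, whence $\rank M\le n$.)

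Putting the two steps together, $\gamma(S,S',\beta)(v_1\ot\dots\ot v_{2r})=0$ for every decomposable tensor, and hence $\gamma(S,S',\beta)=0$ on all of $V^{\ot 2r}$. I do not anticipate a genuine obstacle here: the only points needing a little care are bookkeeping — confirming that the $\beta$-part of $D_\pi(S,S',\beta)$ is truly independent of $\pi$, so that it factors out of the alternating sum, and matching the row/column indexing of $M$ (by $S$ and $S'$) against the signs in the Leibniz formula. The conceptual content is entirely the classical fact that the determinant of the Gram-type pairing of $n+1$ vectors lying in an $n$-dimensional space vanishes.
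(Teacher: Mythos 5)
Your proof is correct and follows essentially the same route as the paper: both reduce the alternating sum to the determinant of the mixed Gram matrix $\bigl((v_{i_k},v_{j_l})\bigr)$ times the $\pi$-independent $\beta$-factor, and both conclude the determinant vanishes because $n+1$ vectors in an $n$-dimensional space are linearly dependent. Your write-up just spells out the Leibniz expansion and the factoring of the $\beta$-part, which the paper leaves implicit.
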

\begin{proof}
If $S=i_1<i_2<\dots<i_{n+1}$ and $S'=j_1<j_2<\dots<j_{n+1}$,
then for any $v_1\ot\dots\ot v_{2r}\in V^{\ot 2r}$,
clearly the $(n+1)\times(n+1)$ matrix with $k,l$ entry $(v_{i_k},v_{j_l})$ is singular,
since the rows are linearly dependent, as by dimension, there is a linear relation among the
$v_{i_k}$. The lemma follows by observing that $\gamma(S,S',\beta)$ is a multiple of the
function $v_1\ot\dots\ot v_{2r}\mapsto \det(v_{i_k},v_{j_l})$, which is zero.
\end{proof}

The second fundamental theorem for $\Or(V)$ may be stated as follows \cite[Prop. 21]{Ri}.

\begin{theorem}\label{thm:orthsft} If $r\leq n$, the $\gamma_D$ form a basis
of the space of $\Or(V)$-invariants on $(V^{\ot 2r})^*$. If $r\geq n+1$, then
any linear relation among the functionals $\gamma_D$ is a linear consequence
of the relations in Lemma \ref{lem:gammainker}.
\end{theorem}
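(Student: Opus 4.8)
The plan is to reduce the statement about linear relations among the functionals $\gamma_D$ to a statement about $\Or(V)$-invariants inside a tensor space, and then to apply classical invariant theory (the classification of covariants, or equivalently the first fundamental theorem in its full generality). First I would dispose of the easy direction: when $r\leq n$ the $\gamma_D$ are linearly independent. For this one exhibits, for a given diagram $D$, a tuple $v_1\ot\dots\ot v_{2r}$ of basis vectors of $V$ which is ``adapted'' to $D$ in the sense that $\gamma_D$ does not vanish on it while $\gamma_{D'}$ does for $D'\neq D$ in a suitable partial order; since $r\le n$ there are enough basis vectors to make the two endpoints of each edge carry matching labels and distinct edges carry distinct labels. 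A triangularity argument then gives linear independence, and combined with Theorem \ref{thm:orthfft} this shows the $\gamma_D$ form a basis of $((V^{\ot 2r})^*)^{\Or(V)}$.

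For the main case $r\ge n+1$, I would set up the space of relations as follows. The assignment $D\mapsto\gamma_D$ extends to a linear surjection $\Phi$ from the free $K$-vector space $B$ on the set of Brauer diagrams onto $((V^{\ot 2r})^*)^{\Or(V)}$; its kernel $\ker\Phi$ is precisely the space of linear relations we wish to describe. Let $R\subseteq\ker\Phi$ be the subspace spanned by all the elements $\gamma(S,S',\beta)$ of Definition \ref{def:dpi} (more precisely, their preimages $\sum_\pi\ve(\pi)D_\pi(S,S',\beta)$ in $B$); Lemma \ref{lem:gammainker} gives $R\subseteq\ker\Phi$, and we must prove equality. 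The key is a dimension count: both $\dim(B/R)$ and $\dim((V^{\ot 2r})^*)^{\Or(V)}$ can be computed independently, and if they agree the surjection $B/R\twoheadrightarrow((V^{\ot 2r})^*)^{\Or(V)}$ induced by $\Phi$ must be an isomorphism, forcing $R=\ker\Phi$.

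The right-hand dimension, $\dim((V^{\ot 2r})^*)^{\Or(V)}=\dim\End_{\Or(V)}(V^{\ot r})$, is known by Schur--Weyl duality: it equals $\sum_\lambda m_\lambda^2$ where $m_\lambda$ is the multiplicity of the irreducible $\Or(V)$-module labelled by $\lambda$ in $V^{\ot r}$, and these multiplicities are classically computable (for instance via the branching rules, or as the image dimension of the Brauer algebra $B_r(n)$). For the left-hand side I would work with the relations $R$ combinatorially: the elements $\sum_\pi\ve(\pi)D_\pi(S,S',\beta)$ are exactly the images of the ``straightening'' or antisymmetrisation relations, and a standard basis argument — using, say, the $\Or(V)$-analogue of semistandard tableaux or the cell-module combinatorics of $B_r(n)$ from \cite{GL96} — identifies $B/R$ with a space whose dimension is again $\sum_\lambda m_\lambda^2$. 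The cleanest route is probably to observe that $R$ is the kernel of the natural map to the quotient of $B_r(n)$ by the ideal generated by a suitable antisymmetriser on $n+1$ strings, and then to invoke the known structure of that quotient; alternatively one cites \cite[Prop.~21]{Ri} directly, since the excerpt attributes the theorem there.

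The main obstacle is the dimension matching on the side of the relations: one must show that the span $R$ of the elements $\gamma(S,S',\beta)$ is not merely contained in, but exactly equal to, the space of relations, and \emph{a priori} these generators are highly redundant and not obviously controllable. Concretely, the difficulty is to produce a spanning set of $B/R$ of the expected size --- i.e. to show that every Brauer diagram can be rewritten, modulo $R$, as a combination of ``reduced'' diagrams (those with at most $n$ pairwise-distinct-label through-strings in an appropriate normal form), and that the reduced diagrams are then linearly independent in $B/R$. The first half is a straightening algorithm (repeatedly applying an antisymmetrisation relation to any configuration of $n+1$ vertices whose attached vectors are forced to be dependent), which terminates by a monovariant argument; the second half is the genuinely hard linear-independence statement, and it is here that one needs either an explicit pairing with a family of generic tensors or the full force of the cellular structure of $B_r(n)$. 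I expect to handle termination and the upper bound by elementary means, and to obtain the matching lower bound either by the tensor-evaluation argument sketched above for $r\le n$, suitably adapted, or by quoting the representation-theoretic computation of the $m_\lambda$.
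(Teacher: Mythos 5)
The paper does not prove this theorem at all: it is quoted as an input, with the attribution ``\cite[Prop.~21]{Ri}'' appearing immediately before the statement, and everything in the paper is built on top of it. So your fallback option --- ``alternatively one cites \cite[Prop.~21]{Ri} directly'' --- is in fact exactly what the authors do, and to that extent your proposal is consistent with the paper.

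As an independent proof, however, your sketch has a genuine gap precisely where you flag it. Your first part ($r\le n$) is fine and is in fact cleaner than you suggest: labelling the $r$ edges of $D$ by distinct basis vectors gives a tensor $v_D$ with $\gamma_{D'}(v_D)=\delta_{D,D'}$, so the $\gamma_D$ are dual to an explicit family of tensors and no triangularity is needed. For $r\ge n+1$ the dimension-count strategy is legitimate and is essentially how the classical proofs (Weyl, Richman) are organised, but the entire content of the theorem sits in the step you defer: producing, by a straightening algorithm modulo $R$, a spanning set of $B/R$ whose cardinality equals $\dim\End_{\Or(V)}(V^{\ot r})=\sum_\lambda m_\lambda^2$. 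You do not say what the ``reduced'' diagrams are, why the rewriting terminates, or why their number matches the target dimension; and your worry about proving their linear independence in $B/R$ is actually a red herring --- once the spanning set has the right size, independence is automatic from the surjection onto the invariants. The genuinely hard and unaddressed point is the upper bound, i.e.\ the combinatorial identification of the quotient $B/R$. Without that (or an alternative such as the Capelli-type arguments in \cite{Ri}), the argument does not close, so the proposal as written is an outline with the key lemma missing rather than a proof.
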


\subsection{Second formulation}

Our objective is to reinterpret the first and second
fundamental theorems in terms of the Brauer algebra $B_r(n)$, which is the algebra
described above, with $\delta$ replaced by $n=\dim V$. For this purpose we consider
some maps which we now define. There is a canonical map $\xi:V\ot V\to \End(V)$ given by
$\xi(v\ot w):x\mapsto (w,x)v$ (where $v,w,x\in V$).
Define $A:V^{\ot 2r}\to \End(V^{\ot r})(\simeq (\End(V))^{\ot r})$
by
$$
A(v_1\ot\dots\ot v_{2r})=\xi(v_1\ot v_{r+1})\ot\xi(v_2\ot v_{r+2})\ot\dots\ot
\xi(v_r\ot v_{2r}).
$$
This map respects the action of $\Or(V)$ on its domain and range,
where $\Or(V)$ acts on $\End(V)^{\ot r}$ by conjugation: for $g\in \Or(V)$ and
$\alpha_1\ot\dots\ot\alpha_r\in\End(V)^{\ot r}$, $g\cdot(\alpha_1\ot\dots\ot\alpha_r):=
g\alpha_1g\inv\ot\dots\ot g\alpha_rg\inv$.

Next observe that if $b_1,\dots,b_n$ is an orthonormal basis of $V$, the element
$\gamma_0:=\sum_{i=1}^nb_i\ot b_i$ is $\Or(V)$-invariant, and independent of the basis.
Hence the linear map $\phi:V\ot V\to V\ot V$ defined by $\phi(v\ot w)=(v,w)\gamma_0$
commutes with $\Or(V)$, i.e. $\phi\in\End_{\Or(V)}V^{\ot 2}$. This map $\phi$
is called the contraction map. For $i=1,2,\dots r-1$, define $\phi_i\in\End(V^{\ot r})$
as the endomorphism which is the contraction $\phi$ on the tensor product of the
$i^{\text th}$ and $(i+1)^{\text st}$ factors, and is the identity on all other factors.
It was proved by Brauer \cite{Br} that if $\Sym_r\subset B_r(n)$ acts via place permutations
on $V^{\ot r}$, i.e. if we define, for $\sigma\in\Sym_r$, $\nu(\sigma)\cdot (v_1\ot\dots\ot v_r)
=v_{\sigma\inv(1)}\ot\dots\ot v_{\sigma\inv(r)}$, then $\nu$ extends to a homomorphism
of associative algebras
$$
\nu: B_r(n)\to \End_{\Or(V)}(V^{\ot r})\subseteq \End(V^{\ot r})
$$
by defining $\nu(e_i)=\phi_i$.
Note that if $\sigma\in\Sym_r$ the diagram representing $\sigma$ in $B_r(n)$
has edges $(\sigma(i),r+i)$, for $i=1,2,\dots,r$

Finally, we define a linear map $\tau:B_r(n)\to V^{\ot 2r}$ as follows. For any diagram
$D\in B_r(n)$, define $\tau(D)=t_D$, where
$$
t_D=\sum_{\substack{i_1,\ldots,i_{2r}=1\\i_j=i_k\;
\text{if $(j,k)$ is an edge of $D$}}}^n b_{i_1}\otimes \cdots\otimes b_{i_{2r}},
$$
where $b_1,\dots,b_n$ is an orthonormal basis of $V$. Notice that $t_D$ is the unique element
of $V^{\ot 2r}$ such that in the notation above,
$$
[-,t_D]=\gamma_D.
$$
Now consider the following diagram of linear maps.

\be\label{diag:triangle}
\psmatrix[colsep=14mm,rowsep=10mm]
%K\Sym_r & T^r(V)\ot T^r(V^*)\cong(T^r(V)\ot T^r(V^*))^*\\
 & B_r(n)\\
 V^{\ot 2r} & &\End(V^{\ot r})
\psset{nodesep=2pt}\everypsbox{\scriptstyle}
\ncline{->}{1,2}{2,1}\tlput{\tau}
%  \ncline%[linestyle=dashed]
\ncline{->}{1,2}{2,3}\trput{\nu}%^{\hskip -.2cm\mu}
\ncline{->}{2,1}{2,3}^A
\endpsmatrix
\ee
The next statement is crucial for understanding the second fundamental theorem in the context of the
Brauer algebra.
\begin{proposition}\label{prop:commute}
The diagram \eqref{diag:triangle} commutes.
\end{proposition}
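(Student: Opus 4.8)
The plan is to verify the identity $A \circ \tau = \nu$ on the natural $K$-basis of $B_r(n)$ given by the Brauer diagrams $D$, since all three maps are $K$-linear. Fix a diagram $D$, so that $\tau(D) = t_D = \sum b_{i_1} \otimes \cdots \otimes b_{i_{2r}}$, the sum being over all tuples $(i_1,\dots,i_{2r}) \in [1,n]^{2r}$ that are constant along the edges of $D$. Applying $A$, each summand becomes $\xi(b_{i_1}\otimes b_{i_{r+1}}) \otimes \cdots \otimes \xi(b_{i_r}\otimes b_{i_{2r}})$, so I must show that summing these produces exactly the endomorphism $\nu(D) \in \End(V^{\otimes r})$. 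To compare the two sides concretely, I would evaluate both on an arbitrary basis vector $b_{k_1}\otimes\cdots\otimes b_{k_r}$ of $V^{\otimes r}$ and read off the coefficient of $b_{l_1}\otimes\cdots\otimes b_{l_r}$, i.e.\ compute the matrix entries in the orthonormal tensor basis.

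For the left-hand side, note that $\xi(b_i\otimes b_j)$ sends $b_k \mapsto \delta_{jk} b_i$, so $A(t_D)$ applied to $b_{k_1}\otimes\cdots\otimes b_{k_r}$ is nonzero on a given summand only when $i_{r+m} = k_m$ for all $m=1,\dots,r$; when this holds the output is $b_{i_1}\otimes\cdots\otimes b_{i_r}$. Thus the $(l_\bullet, k_\bullet)$-matrix entry of $A(t_D)$ counts the tuples $(i_1,\dots,i_{2r})$ constant along the edges of $D$ with $i_{r+m}=k_m$ and $i_m = l_m$ for all $m$; since the constancy condition determines each $i_j$ by the value at the other end of its edge, this entry is $1$ precisely when the assignment ``top vertex $m \mapsto l_m$, bottom vertex $r+m \mapsto k_m$'' is consistent along every edge of $D$ (equal labels at the two ends of each edge, both horizontal and vertical), and $0$ otherwise. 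This is exactly the standard combinatorial description of the matrix of $\nu(D)$: for a through-string from top $m$ to bottom $r+m'$ one needs $l_m = k_{m'}$; for a horizontal top-edge between $m$ and $m'$ one needs $l_m = l_{m'}$ contributing a factor $(b_{l_m},b_{l_{m'}})=\delta_{l_m l_{m'}}$ from the contraction $\phi$, and similarly for a horizontal bottom-edge.

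The remaining point is to confirm that this combinatorial matrix really is $\nu(D)$. Rather than re-deriving it for all diagrams, I would use the presentation of Lemma \ref{lem:brprops}(iii): it suffices to check the claim for the generators $s_i$ and $e_i$, and that $A\circ\tau$ is compatible with the diagram calculus, i.e.\ $A(t_{D_1 \circ D_2})$ matches the product of the matrices when free loops contribute the factor $n$. For $s_i$ the diagram has edges $(m, r+m)$ for $m\neq i,i+1$ and $(i,r+i+1),(i+1,r+i)$, and the consistency condition above forces the transposition of the $i$-th and $(i+1)$-th tensor slots, which is $\nu(s_i)$ by definition; for $e_i = e_{i,i+1}$ the diagram has a horizontal top-edge $(i,i+1)$ and bottom-edge $(r+i,r+i+1)$, giving the matrix $b_{k_1}\otimes\cdots\otimes b_{k_r} \mapsto \delta_{k_i k_{i+1}} \sum_{j} b_{k_1}\otimes\cdots\otimes b_j \otimes b_j \otimes \cdots \otimes b_{k_r}$ in slots $i,i+1$, which is exactly $\phi_i = \nu(e_i)$. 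The multiplicativity check — that concatenating diagrams and deleting a free loop corresponds to composing the associated endomorphisms with the scalar $n$ appearing once per loop — amounts to the observation that summing an index over a closed cycle of forced-equal labels contributes $\sum_{j=1}^n 1 = n$. I expect the main obstacle to be purely bookkeeping: tracking signs and index identifications carefully enough when a diagram $D$ has horizontal edges in both rows simultaneously, so that the count of consistent labelings genuinely coincides with the product of $\delta$'s and the loop factors in $\nu(D)$; once the generator and multiplicativity cases are in hand, the general diagram follows since every $D$ is a product of $s_i$'s and $e_i$'s.
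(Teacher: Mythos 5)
Your argument is correct, but it takes a genuinely different route from the paper's. You compute $A(t_D)$ for an arbitrary diagram $D$ directly in the orthonormal tensor basis, obtaining the $0$--$1$ ``consistency'' matrix, and then identify this with $\nu(D)$ by checking agreement on the generators $s_i,e_i$ of Lemma \ref{lem:brprops}(iii) together with multiplicativity of the assignment $D\mapsto A(t_D)$ under concatenation of diagrams, with each deleted loop contributing a factor $n$. The paper instead exploits the action of $\Sym_r\times\Sym_r$ on all three spaces of \eqref{diag:triangle}: after observing that $\nu$, $\tau$ and $A$ are all equivariant, every diagram is reduced to a normal form $l(s)$ (a permutation composed with nested horizontal arcs), and the single identity $A\tau(l(s))=\nu(l(s))$ is verified directly from $\sum_i\xi(b_i\ot b_i)=\Id_V$ and $\sum_{i,j}\xi(b_i\ot b_j)\ot\xi(b_i\ot b_j)=\phi$. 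The equivariance argument spares the paper your multiplicativity step, which is essentially a re-derivation of Brauer's original construction of the representation $\nu$; your version, on the other hand, yields an explicit closed-form matrix description of $A(t_D)$ for every $D$ at once, and the multiplicativity check it requires (a free index summed around a closed cycle gives $n$) is classical and unproblematic. Both are complete proofs of the proposition.
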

\begin{proof}
We start by observing that the group $\Sym_r\times\Sym_r$ acts on each of the
three spaces in the diagram
as follows. Let $(\sigma_1,\sigma_2)\in\Sym_r\times\Sym_r$.
Then for a diagram $D\in B_r(n)$,
$(\sigma_1,\sigma_2)\cdot D:=\sigma_1 D\sigma_2\inv$; for
$v_1\ot\dots\ot v_r\ot w_1\ot\dots\ot w_r\in V^{\ot 2r}$,
$(\sigma_1,\sigma_2)\cdot(v_1\ot\dots\ot v_r\ot w_1\ot\dots\ot w_r):=
\nu(\sigma_1)(v_1\ot\dots\ot v_r)\ot
\nu(\sigma_2)(w_1\ot\dots\ot w_r)$, and for $T\in\End(V^{\ot r})$,
$(\sigma_1,\sigma_2).T:=\nu(\sigma_1)T\nu(\sigma_2\inv)$.

Next, a straightforward computation shows that each of the maps
$\nu, \tau$ and $A$ respects the action
of $\Sym_r\times\Sym_r$.

Now each diagram $D\in B_r(n)$ may be written in the form $D=\sigma_1 l(s) \sigma_2\inv$ for some
$(\sigma_1,\sigma_2)\in \Sym_r\times\Sym_r$ and some diagram $l(s)$, where
$s\in\{1,2,\dots,[\frac{n+1}{2}]\}$,
and the diagram $l(s)$ is as shown in Figure \ref{fig6}.

\begin{figure}[hb]
\begin{center}
\begin{tikzpicture}[scale=1]

%% draw filled dots at required points at bottom
\foreach \x in {1,2,3,4,7,8,9,10,13}
\filldraw(\x,0) circle (0.1cm);
%% draw filled dots at required points on top
\foreach \x in {1,2,3,4,7,8,9,10,13}
\filldraw(\x,3) circle (0.1cm);
%% draw straight lines where required
\foreach \x in {9,10,13}
\draw (\x,3)--(\x,0);

%% label the points in top row
\draw node[above] at (1,3){1};\draw node[above] at (2,3){2};\draw node[above] at (3,3){3};
\draw node[above] at (4,3){4};\draw node[above] at (7,3){2s-1};\draw node[above] at (8,3){2s};
\draw node[above] at (9,3){2s+1};\draw node[above] at (10,3){2s+2};\draw node[above] at (13,3){r};

%% label the points in bottom row
\draw node[below] at (1,0){r+1};\draw node[below] at (2,0){r+2};\draw node[below] at (9,0){2s+r+1};
\draw node[below] at (13,0){2r};

%% draw curved lines where required, top
\draw(1,3) .. controls (1.5,2)  .. (2,3);
\draw(3,3) .. controls (3.5,2)  .. (4,3);
\draw(7,3) .. controls (7.5,2)  .. (8,3);
%\draw(7,3) .. controls (7.5,2) and (9.5,2) .. (10,3);
%\draw(6,3) .. controls (7,1.5) and (10,1.5) .. (11,3);

%% draw curved lines where required, bottom
\draw(1,0) .. controls (1.5,1)  .. (2,0);
\draw(3,0) .. controls (3.5,1)  .. (4,0);
\draw(7,0) .. controls (7.5,1)  .. (8,0);
%\draw(5,0) .. controls (6,0.8) and (9,0.8) .. (10,0);
%\draw(7,0) .. controls (7.5,1) and (9.5,1) .. (10,0);
%\draw(6,0) .. controls (7,1.5) and (10,1.5) .. (11,0);

%% draw ... between straight lines
\draw(5.5,1.5) node {\large{$\cdots$}};
\draw(11.5,1.5) node {\large{$\cdots$}};
%\draw(12.5,1.5) node {\large{$\cdots$}};

\end{tikzpicture}
%\centerline{Figure 4}
\end{center}
\caption{ }
\label{fig6}
\end{figure}

%\begin{topinsert}
%\botcaption{Figure 6}
%\endcaption
%\end{insert}

Hence if we were able to prove that for each $s$, we have
\be\label{eq:reduction}
A\tau(l(s))=\nu(l(s)),
\ee
we would have, for $(\sigma_1,\sigma_2)\in\Sym_r\times\Sym_r$,
$A\tau(\sigma_1 l(s)\sigma_2\inv)=(\sigma_1,\sigma_2)\cdot A\tau(l(s))
=(\sigma_1,\sigma_2)\cdot\nu(l(s))=\nu(\sigma_1 l(s)\sigma_2\inv)$, and the
Proposition would follow. Hence it remains to prove \eqref{eq:reduction},
and this may be checked directly, given the identities
$$
\sum_{i=1}^n\xi(b_i\ot b_i)=\Id_V
$$
and
$$
\sum_{i,j=1}^n\xi(b_i\ot b_j)\ot\xi(b_i\ot b_j)=\phi:V\ot V\to V\ot V,
$$
where $\phi$ is the contraction introduced above.
\end{proof}

\begin{corollary}\label{cor:fftbrauer}
The map $\nu$ maps $B_r(n)$ surjectively to $\End_{\Or(V)}(V^{\ot r})$,
and $\ker (\nu)=\ker(\tau)$.
\end{corollary}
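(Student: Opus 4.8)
The plan is to deduce Corollary \ref{cor:fftbrauer} directly from Proposition \ref{prop:commute} together with the two classical fundamental theorems, Theorems \ref{thm:orthfft} and \ref{thm:orthsft}. First I would observe that the map $\tau:B_r(n)\to V^{\ot 2r}$ is injective, since the diagrams $D$ are distinct and the basis vectors $b_{i_1}\ot\cdots\ot b_{i_{2r}}$ appearing in distinct $t_D$ are not linearly dependent in a way that would cause a collapse --- more precisely, the coefficient of $b_{i_1}\ot\cdots\ot b_{i_{2r}}$ in $t_D$ is $1$ precisely when the tuple is constant on each edge of $D$, so a non-trivial relation $\sum_D c_D t_D=0$ would force all $c_D=0$ by choosing, for each $D$, a tuple adapted to $D$ but not to any other diagram (possible since $n\geq 1$; one uses that the diagrams are combinatorially distinct). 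Hence $\tau$ identifies $B_r(n)$ with its image $\tau(B_r(n))\subseteq V^{\ot 2r}$, and under the identification $V^{\ot 2r}\cong(V^{\ot 2r})^*$ via $[-,-]$, we have $[-,t_D]=\gamma_D$ by construction, so $\tau(B_r(n))$ corresponds to the span of the invariant functionals $\gamma_D$.

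Next I would invoke Theorem \ref{thm:orthfft}: every $\Or(V)$-invariant functional on $V^{\ot 2r}$ is a linear combination of the $\gamma_D$. Combined with the previous paragraph, this shows that $\tau$ maps $B_r(n)$ isomorphically onto the space of $\Or(V)$-invariants in $(V^{\ot 2r})^*$, equivalently onto $((V^{\ot 2r})^*)^{\Or(V)}\cong (V^{\ot 2r})^{\Or(V)}$. Now for the surjectivity of $\nu$: by Proposition \ref{prop:commute} we have $\nu=A\circ\tau$, so $\im(\nu)=A(\tau(B_r(n)))=A((V^{\ot 2r})^{\Or(V)})$. It therefore suffices to check that $A$ carries the full space of $\Or(V)$-invariants in $V^{\ot 2r}$ onto $\End_{\Or(V)}(V^{\ot r})$. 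Since $A$ respects the $\Or(V)$-actions (as noted in the setup of the diagram), it maps invariants to invariants, i.e. into $\End_{\Or(V)}(V^{\ot r})$. For the reverse inclusion one uses the canonical isomorphism $\End(V^{\ot r})\cong(V^{\ot r})\ot(V^{\ot r})^*\cong(V^{\ot r})\ot(V^{\ot r})\cong V^{\ot 2r}$ (the middle step being the form), under which the conjugation action of $\Or(V)$ on $\End(V^{\ot r})$ matches the tensor action on $V^{\ot 2r}$, so that $\End_{\Or(V)}(V^{\ot r})\cong(V^{\ot 2r})^{\Or(V)}$; one then checks that $A$, composed with the inverse of this isomorphism, is (up to the reordering of tensor factors $v_1\ot\cdots\ot v_r\ot v_{r+1}\ot\cdots\ot v_{2r}\mapsto(v_1\ot v_{r+1})\ot\cdots\ot(v_r\ot v_{2r})$) an $\Or(V)$-equivariant isomorphism, hence an isomorphism on invariants. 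This gives surjectivity of $\nu$, recovering Brauer's first fundamental theorem in this form.

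Finally, the statement $\ker(\nu)=\ker(\tau)$ follows formally: since $\nu=A\circ\tau$ we certainly have $\ker(\tau)\subseteq\ker(\nu)$. Conversely, $\tau$ is injective by the first paragraph, so $\ker(\tau)=0$, whence $\ker(\nu)$ is contained in --- in fact equal to --- $\ker(\tau)$ as soon as we know $\ker(\nu)=0$. But wait: we do \emph{not} expect $\ker(\nu)=0$ for $r\geq n+1$, so I must be more careful. The correct argument is: $A$ restricted to $\tau(B_r(n))=((V^{\ot 2r})^*)^{\Or(V)}$ need not be injective, and indeed $\ker(\nu)=\tau\inv(\ker(A|_{\tau(B_r(n))}))$; since $\tau$ is injective, this identifies $\ker(\nu)$ with $\tau(B_r(n))\cap\ker(A)$, and the point is simply that $\ker(\nu)=\ker(A\tau)=\tau\inv(\ker A)$ while $\ker(\tau)=0$ — so the equality $\ker(\nu)=\ker(\tau)$ as stated would be false. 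Let me reconsider: the intended reading must be that $\tau$ is \emph{not} asserted injective, and in fact the honest statement is that $\ker(\nu)$ and $\ker(\tau)$ coincide as ideals of $B_r(n)$, which forces me instead to argue that $A$ is injective on $\im(\tau)$. I expect this to be the main obstacle, and the resolution is the equivariant isomorphism $V^{\ot 2r}\xrightarrow{\sim}\End(V^{\ot r})$ from the previous paragraph restricted to invariants: under that isomorphism $A$ (after the factor-reordering, which is a bijection) \emph{is} the identity, hence injective everywhere, so $\ker(A)=0$ and therefore $\ker(\nu)=\ker(A\tau)=\ker(\tau)$. Thus the whole corollary reduces to verifying that the natural map $V^{\ot 2r}\to\End(V^{\ot r})$ built from $\xi$ is an $\Or(V)$-equivariant \emph{isomorphism} — a dimension count ($\dim V^{\ot 2r}=n^{2r}=\dim\End(V^{\ot r})$) plus injectivity of $\xi:V\ot V\to\End(V)$ (clear since $(w,-)v$ determines $v\ot w$ when the form is non-degenerate) — and that is the step I would write out with care.
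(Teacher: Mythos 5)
Your final argument is the paper's, but your first paragraph asserts something false, and the falsehood is precisely the content of the whole paper. The map $\tau$ is \emph{not} injective when $r\geq n+1$: by Lemma \ref{lem:gammainker} and the non-degeneracy of $[-,-]$, the elements $t(S,S',\beta)=\tau(b(S,S',\beta))$ vanish, so $\ker(\tau)$ contains the nonzero ideal that Theorem \ref{thm:main} is devoted to describing. Your separating-tuple argument breaks down exactly here: a Brauer diagram on $2r$ vertices has $r$ edges, so a tuple $(i_1,\dots,i_{2r})$ adapted to $D$ and to no other diagram would need the $r$ edges of $D$ to carry pairwise distinct colours, which requires $n\geq r$. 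When $r>n$, every tuple adapted to $D$ must repeat a colour on two edges, and is then also adapted to the diagram obtained by re-pairing those four vertices; correspondingly $\dim B_r(n)=(2r-1)!!$ strictly exceeds $\dim(V^{\ot 2r})^{\Or(V)}$ by Theorem \ref{thm:orthsft}. The same error propagates into your second paragraph, where ``isomorphically onto'' should read ``onto''.

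To your credit you notice the contradiction in the final paragraph and, after some back-and-forth, land on the correct mechanism, which is the paper's: the relevant injectivity is that of $A$, not of $\tau$. Concretely, $\xi:V\ot V\to\End(V)$ is an isomorphism (non-degeneracy of the form plus a dimension count), hence $A$ is an $\Or(V)$-equivariant isomorphism $V^{\ot 2r}\to\End(V^{\ot r})$ and restricts to an isomorphism on invariants. Since $\im(\tau)=(V^{\ot 2r})^{\Or(V)}$ by Theorem \ref{thm:orthfft} (each $t_D$ is invariant, and the $\gamma_D=[-,t_D]$ span the invariant functionals), commutativity of \eqref{diag:triangle} gives $\im(\nu)=A\bigl((V^{\ot 2r})^{\Or(V)}\bigr)=\End_{\Or(V)}(V^{\ot r})$, and injectivity of $A$ gives $\ker(\nu)=\ker(A\tau)=\ker(\tau)$. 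Injectivity of $\tau$ is never needed anywhere; strike the first paragraph and the proof is correct and coincides with the paper's.
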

\begin{proof} Since $A$ is an $\Or(V)$-equivariant isomorphism, it restricts to an
isomorphism from the $\Or(V)$-invariants of $V^{\ot 2r}$ to those of $\End(V^{\ot r})$.
It follows from Proposition \ref{prop:commute} that since $\tau$ has image
$(V^{\ot 2r})^{\Or(V)}$,
$\im(\nu)=\End(V^{\ot r})^{\Or(V)}=\End_{\Or(V)}(V^{\ot r})$.
The fact that $\ker(\nu)=\ker(\tau)$ is clear from the commutativity of the diagram.
\end{proof}

\section{Statement of the main result}

\subsection{Overview}

We shall translate Theorem \ref{thm:orthsft}, which uses only the linear
structure, into an explicit description of the ideal $\ker(\nu)$ above.
We start with the following easy observation.
\begin{lemma}\label{lem:ker1} For each triple $(S,S',\beta)$ as in Definition
\ref{def:dpi}, define the element
$$
b(S,S',\beta)=\sum_{\pi\in\Sym_{n+1}}\ve(\pi)D_\pi(S,S',\beta)\in B_r(n).
$$
Then the elements $b(S,S',\beta)$ span $\ker(\nu)$.
\end{lemma}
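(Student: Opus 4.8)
The plan is to reduce everything to the linear form of the second fundamental theorem (Theorem \ref{thm:orthsft}) by way of Corollary \ref{cor:fftbrauer}. By that Corollary, $\ker(\nu)=\ker(\tau)$, so it suffices to prove that the elements $b(S,S',\beta)$ span $\ker(\tau)$. First I would make the dictionary between the Brauer algebra and the space of invariant functionals precise: since $B_r(n)$ has the set of diagrams as a $K$-basis, $\tau$ is the linear map sending the basis vector $D$ to $t_D\in V^{\ot 2r}$, and the non-degenerate form $[-,-]$ identifies $V^{\ot 2r}$ with ${(V^{\ot 2r})}^*$ so that $t_D\leftrightarrow\gamma_D$. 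Consequently, for $x=\sum_D c_D D\in B_r(n)$ one has $x\in\ker(\tau)$ if and only if $\sum_D c_D\gamma_D=0$ in ${(V^{\ot 2r})}^*$; that is, $\ker(\tau)$ is exactly the space of linear relations among the functionals $\gamma_D$, transported into $B_r(n)$ via the diagram basis.

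Next I would record that each $b(S,S',\beta)$ lies in $\ker(\tau)$. Indeed, by definition $\tau(b(S,S',\beta))=\sum_{\pi\in\Sym_{n+1}}\ve(\pi)\,t_{D_\pi(S,S',\beta)}$, whose image under $[-,-]$ is $\sum_{\pi}\ve(\pi)\gamma_{D_\pi(S,S',\beta)}=\gamma(S,S',\beta)$, and this vanishes by Lemma \ref{lem:gammainker}. Hence $b(S,S',\beta)\in\ker(\tau)=\ker(\nu)$ for every admissible triple, giving one inclusion.

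It then remains to prove the reverse inclusion: every element of $\ker(\tau)$ is a $K$-linear combination of the $b(S,S',\beta)$. Under the dictionary above this is precisely the second assertion of Theorem \ref{thm:orthsft}: for $r\geq n+1$, every linear relation among the $\gamma_D$ is a linear consequence of the relations $\gamma(S,S',\beta)=0$ of Lemma \ref{lem:gammainker}; equivalently, the corresponding relation vectors span the space of all relations, and these relation vectors are exactly the $b(S,S',\beta)$ once a relation is identified with an element of $B_r(n)$ through the diagram basis. (If $r\leq n$ there are no admissible triples, while $\ker(\tau)=0$ by the first assertion of Theorem \ref{thm:orthsft}, so the statement holds trivially.) Combining the two inclusions yields $\ker(\nu)=\ker(\tau)=\mathrm{span}_K\{b(S,S',\beta)\}$.

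I do not anticipate a genuine obstacle: the one point requiring care is the translation of the phrase ``linear consequence of the relations in Lemma \ref{lem:gammainker}'' into the assertion that the $b(S,S',\beta)$ span $\ker(\tau)$, which amounts to fixing the identification of $B_r(n)$ with the free vector space on diagrams together with the identification $V^{\ot 2r}\simeq{(V^{\ot 2r})}^*$ afforded by $[-,-]$. All the real content is already contained in Theorem \ref{thm:orthsft} and Corollary \ref{cor:fftbrauer}.
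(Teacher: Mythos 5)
Your proposal is correct and follows essentially the same route as the paper: show each $b(S,S',\beta)$ lies in $\ker(\tau)=\ker(\nu)$ via Lemma \ref{lem:gammainker} and the identification $t_D\leftrightarrow\gamma_D$, then invoke Theorem \ref{thm:orthsft} to get the spanning statement. You merely spell out more explicitly the dictionary between $\ker(\tau)$ and the space of linear relations among the $\gamma_D$, which the paper leaves implicit.
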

\begin{proof}
Writing $t(S,S',\beta)=\tau(b(S,S',\beta))$, it is clear that the functional
$$
x\mapsto [x,t(S,S',\beta)]\;\;(x\in V^{\otimes r})
$$
on $V^{\ot 2r}$
is equal to $\gamma(S,S',\beta)$, which is
zero by Lemma \ref{lem:gammainker}. Hence $b(S,S',\beta)\in\ker(\tau)=\ker(\nu)$.
It follows from Theorem \ref{thm:orthsft} that these elements span the kernel.
\end{proof}

Next, we identify a small subset of the elements $b(S,S',\beta)$
of $B_r(n)$, which
are such that the ideal of $B_r(n)$ which they generate contains, for each triple $S,S',\beta$
as in Definition \ref{def:dpi}, the element $b(S,S',\beta)$.
By Lemma \ref{lem:ker1} the ideal they generate is the whole kernel. We then show that this ideal
is in fact generated by one of those elements.

\subsection{Formulation}

We begin by defining certain elements of $B_r(n)$.
For this purpose, the following
notation will be convenient. If $k,l$ are integers such that $1\leq k<l$, write
$a(k,l):=a(\Sym\{k,k+1,\dots,l\})$ (see \S\ref{ss:alt}). By convention, if $k\geq l$, $a(k,l)=1$.
\begin{definition}\label{def:br-elts} For $i=0,1,\dots,\half$ define the
following elements of $B_r(n)$
\begin{enumerate}
\item $F_i:=a(1,i)a(i+1,n+1)$, where $F_0$ is interpreted as $a(1,n+1)$.
\item For $j=0,1,2,\dots,i$, define $e_i(j)=e_{i,i+1}e_{i-1,i+2}\dots e_{i-j+1,i+j}$.
This is a diagram
with $j$ top (resp. bottom) horizontal arcs. Note that $e_i(0)=1$ by convention.
\item Using the above notation, define elements $E_i$ $(i=0,1,\dots,\half)$ as follows.
$$
E_i=\sum_{j=0}^i(-1)^jc_i(j)F_ie_i(j)F_i,
$$
where $c_i(j)=\left((i-j)!(n+1-i-j)!(j!)^2 \right)\inv$.
\end{enumerate}
\end{definition}

Note that the leading term ($j=0$) of $E_i$ is $(i!(n+1-i)!)\inv F_i^2=F_i$.

Our main result is
\begin{theorem}\label{thm:main}
In the notation of Definition \ref{def:br-elts}, write $E=E_\half$.
Then $E^2=(\half)!(n+1-\half)!E$.
If $r\leq n$ the map $\nu:B_r(n)\to \End_{\Or(V)}(V^{\ot r})$ is an isomorphism.
If $r\geq n+1$, $\ker(\nu)$ is generated as an ideal of $B_r(n)$ by $E$.
\end{theorem}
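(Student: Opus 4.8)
The plan is to separate the three assertions and handle the idempotency of $E$ by a direct computation in the Brauer algebra, while the kernel statement is deduced from the linear-algebraic Theorem~\ref{thm:orthsft} via the elements $b(S,S',\beta)$ of Lemma~\ref{lem:ker1}. First I would dispose of the case $r\leq n$: here $B_r(n)$ is semisimple of the same dimension as $\End_{\Or(V)}(V^{\ot r})$ (by the First Fundamental Theorem, $\tau$ is then injective since the $\gamma_D$ are linearly independent, and $\dim B_r(n)$ equals the number of diagrams), so the surjection $\nu$ of Corollary~\ref{cor:fftbrauer} is forced to be an isomorphism. This part is essentially bookkeeping.

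For the idempotency relation $E^2 = (\half)!(n+1-\half)!\,E$, I would work with $E_i = \sum_{j=0}^i (-1)^j c_i(j) F_i e_i(j) F_i$ and compute $F_i e_i(j) F_i \cdot F_i e_i(k) F_i$. The key structural input is Lemma~\ref{lem:capzero}: the alternating element $a(1,i)$ annihilates any $e_{p,q}$ with $p,q\in[1,i]$, and similarly for $a(i+1,n+1)$ on $[i+1,n+1]$; so $F_i$ kills the "internal" cup–cap pairs, and the surviving terms in $F_i e_i(j) F_i \cdot F_i e_i(k) F_i$ are governed by how many arcs of $e_i(j)$ and $e_i(k)$ straddle the partition $[1,i]\mid[i+1,n+1]$. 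Using $e_i e_{i\pm1} e_i = e_i$ and the other relations of Lemma~\ref{lem:brprops}(iii), together with $F_i^2 = i!(n+1-i)!\,F_i$, one extracts a recursion on the $c_i(j)$; the precise constants $c_i(j)=((i-j)!(n+1-i-j)!(j!)^2)\inv$ are exactly what makes the cross terms telescope so that $E_i^2$ is a scalar multiple of $E_i$. I expect this to be the computational heart of the argument, and the place where one must be most careful about which horizontal arcs of $e_i(j)$ connect the two halves of $[1,n+1]$. Taking $i=\half$ gives the stated scalar.

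For the kernel statement when $r\geq n+1$, I would first observe that $E = E_\half$ lies in $\ker(\nu)$: indeed $\nu(F_\half)$ involves the alternating element on $n+1$ indices acting on $V$, and since $\dim V = n$ the antisymmetrizer over $n+1$ tensor factors vanishes, so $\nu(F_\half)=0$ and hence $\nu(E)=0$; alternatively $E$ is, up to the identification $\tau$, a combination of the $b(S,S',\beta)$ of Lemma~\ref{lem:ker1}. Thus the ideal $\langle E\rangle\subseteq\ker(\nu)$. For the reverse inclusion I would show that $\langle E\rangle$ contains every $b(S,S',\beta)$. Here one uses the $\Sym_r\times\Sym_r$-action from the proof of Proposition~\ref{prop:commute}: conjugating $E$ by permutations produces the element built from any chosen $(n+1)$-subsets in place of $[1,i]$ and $[i+1,n+1]$, and composing with suitable $e_{k,\ell}$ adjusts the fixed pairing $\beta$. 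The subtlety is that $E$ is the single element $E_\half$ (with $\half$ horizontal arcs), whereas $b(S,S',\beta)$ is an antisymmetrization over $\Sym_{n+1}$ of diagrams $D_\pi$ having varying numbers of through-strings; so I would prove, by downward induction on $i$, that the ideal generated by $E_\half$ contains $E_{\half-1},\dots,E_1,E_0$, and that the $E_i$ together span (modulo the ideal) all the $b(S,S',\beta)$. The inductive step again rests on multiplying $E_i$ by appropriate $e_{p,q}$ and $s_{p,q}$ and using Lemma~\ref{lem:capzero} to collapse terms. Once all $b(S,S',\beta)\in\langle E\rangle$, Lemma~\ref{lem:ker1} gives $\ker(\nu)=\langle b(S,S',\beta)\rangle\subseteq\langle E\rangle$, completing the proof. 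The main obstacle, as indicated, is organizing the Brauer-algebra bookkeeping in the idempotency computation and in the inductive descent through the $E_i$ so that the combinatorial coefficients $c_i(j)$ come out exactly right.
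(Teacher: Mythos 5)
Your overall architecture (reduce to the elements $b(S,S',\beta)$ of Lemma \ref{lem:ker1}, show $E\in\ker(\nu)$, then descend through the $E_i$) matches the paper's, but there are concrete problems. First, your primary reason that $E\in\ker(\nu)$ is wrong: $F_\half=a(1,\half)a(\half+1,n+1)$ is a \emph{product of two} alternating elements on index sets of size at most $n$, not the full antisymmetrizer $a(1,n+1)=F_0$ on $n+1$ factors, so $\nu(F_\half)\neq 0$ in general (e.g.\ for $n=3$, $F_2=(1-s_1)(1-s_3)$ acts nontrivially on $V^{\ot r}$ with $\dim V=3$). Your fallback — that $E$ is itself one of the $b(S,S',\beta)$ — is the correct route (it is Proposition \ref{prop:einker}), but it requires the orbit-counting argument identifying the coefficients $c_i(j)$ with inverse stabilizer orders, which you do not supply. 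Second, for the quasi-idempotency you propose a direct expansion of $F_ie_i(j)F_i\cdot F_ie_i(k)F_i$; the tractable route is to first prove the annihilation statement $e_jE_i=E_ie_j=0$ (Theorem \ref{thm:ann}), whence $DE_i=0$ for every diagram $D$ with a horizontal arc, so that $E_i^2=E_iF_i=i!(n+1-i)!E_i$ with no cross terms to telescope at all. Without that intermediate result your computation is not organized in a way that visibly closes.

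The most serious gap is in the generation argument. A general $b(S,S',\beta)$ with $|S\cap[1,r]|=i$, $|S'\cap[1,r]|=j$ and $i+j>n+1$ (positive deficiency) is \emph{not} obtained from any $E_k$ by the $\Sym_r\times\Sym_r$ action together with composition by $e_{k,\ell}$'s: the paper must introduce the separate family $E_{ij}$ (and $E_{ij}^*$), prove an annihilation theorem for them, and then invoke a symmetric-group fact (Lemma \ref{lem:lr}, via the Pieri rule: the ideal of $K\Sym_{i+j}$ generated by $F_{ij}$ sits inside that generated by $F_k$) to produce $x,y\in K\Sym_{i+j}$ with $xF_ky=F_{ij}$, hence $E_{ij}\in\langle E_k\rangle$. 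Your proposal omits this entirely. Moreover your proposed mechanism for the descent $E_{i-1}\in\langle E_i\rangle$ — ``multiplying $E_i$ by appropriate $e_{p,q}$ and $s_{p,q}$'' — cannot work as stated, precisely because of the annihilation theorem: any product $e_{p,q}E_i$ or $DE_i$ with $D$ a diagram in $B_{n+1}(n)$ having a horizontal arc is zero. The correct step multiplies by group-algebra elements $\alpha_i,\beta_i$ with $F_{i-1}=\alpha_iF_i\beta_i$ (again from Lemma \ref{lem:lr}) and then uses the annihilation property to collapse $E_{i-1}\alpha_iE_i\beta_i$ to $E_{i-1}F_{i-1}$, a nonzero multiple of $E_{i-1}$. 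So the representation-theoretic input from $K\Sym_{n+1}$ is not optional bookkeeping; it is the missing idea.
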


We begin with the following result, whose proof will require arguments involving
the geometry of Brauer diagrams.

\begin{proposition}\label{prop:einker}
Assume $r\geq n+1$. For each $i=0,1,\dots,\half$, $E_i\in\ker(\nu)$.
\end{proposition}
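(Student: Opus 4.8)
\textbf{Proof proposal for Proposition \ref{prop:einker}.}

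The plan is to show that each $E_i$ lies in $\ker(\nu)=\ker(\tau)$ by exhibiting $E_i$ as a linear combination of elements $b(S,S',\beta)$ of the type in Lemma \ref{lem:ker1}, all of which are already known to lie in the kernel. The natural candidate is to take $S=[1,n+1]$ and $S'=[r+1,r+n+1]$ (assuming $r\geq n+1$, both sets fit in the appropriate rows), and for $\beta$ to pair up the remaining vertices vertically, i.e. $\beta=\{(k,r+k)\mid n+2\leq k\leq r\}$. Then $D_\pi(S,S',\beta)$ has top row $[1,n+1]$ joined to bottom row $[r+1,r+n+1]$ by the permutation $\pi\in\Sym_{n+1}$, and the alternating sum $b(S,S',\beta)=\sum_\pi\ve(\pi)D_\pi(S,S',\beta)$ is, up to the fixed $\beta$-part, just the antisymmetrizer $a([1,n+1])$ viewed as sitting in the ``$1$-through-$n+1$'' strands of $B_r(n)$. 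So the $j=0$ term $F_i=a(1,i)a(i+1,n+1)$ is \emph{not} itself of this form (it is a product of two smaller antisymmetrizers, not one big one), which is the first thing to confront: the point of the $j>0$ correction terms $F_ie_i(j)F_i$ is precisely to convert the ``split'' antisymmetrizer $F_i$ into something proportional to the ``full'' antisymmetrizer modulo the kernel. The key identity I would establish is a branching/factorization formula expressing, inside $B_r(n)$ and modulo $\ker(\nu)$, the big antisymmetrizer $a(1,n+1)$ (or rather $b(S,S',\beta)$) as a scalar multiple of $E_i$; equivalently, showing $\tau(E_i)=\tau$(something manifestly built from $\gamma(S,S',\beta)$'s$)$.

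Concretely, the main step is a computation with the contraction maps: $\nu(e_i(j))=\phi_i\phi_{i-1}\cdots\phi_{i-j+1}$ composed appropriately, and $\nu(F_i)$ is the product of the two antisymmetrizing idempotents (times $i!(n+1-i)!$) acting on $V^{\otimes r}$. Using the explicit form $\xi$ and the two displayed identities from the proof of Proposition \ref{prop:commute} --- $\sum_i\xi(b_i\otimes b_i)=\Id_V$ and $\sum_{i,j}\xi(b_i\otimes b_j)\otimes\xi(b_i\otimes b_j)=\phi$ --- one translates $\nu(E_i)=A\tau(E_i)$ into a statement about an alternating sum of tensors $t_{D_\pi(S,S',\beta)}$. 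The combinatorial content is: inserting $j$ contraction arcs and re-antisymmetrizing, weighted by $c_i(j)=((i-j)!(n+1-i-j)!(j!)^2)^{-1}$, reproduces exactly the expansion of the determinant $\det\big((v_{i_k},v_{j_l})\big)_{k,l=1}^{n+1}$ --- i.e. of $\gamma(S,S',\beta)$ --- via a Laplace-type or Cauchy--Binet-type expansion along the ``contracted'' block. This is where the precise shape of $c_i(j)$ earns its keep: the factor $(j!)^2$ accounts for the two sets of $j$ horizontal arcs (top and bottom) each of which can be permuted, $(i-j)!$ and $(n+1-i-j)!$ for the residual antisymmetrizations on the two sides of $F_i$ after $j$ strands are tied off. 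Once $\tau(E_i)$ is identified (up to a nonzero scalar) with $\tau(b(S,S',\beta))=t(S,S',\beta)$, Lemma \ref{lem:ker1} (via Lemma \ref{lem:gammainker}) gives $\tau(E_i)=0$, hence $\nu(E_i)=0$ by Corollary \ref{cor:fftbrauer}.

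An alternative, possibly cleaner route avoids $\tau$ entirely and argues directly on $V^{\otimes r}$: show $\nu(E_i)=0$ by evaluating on an arbitrary simple tensor $v_1\otimes\cdots\otimes v_r$. The operator $\nu(F_i)$ antisymmetrizes positions $1,\dots,i$ and (separately) positions $i+1,\dots,n+1$; since $\dim V=n$, after applying the contraction arcs $\nu(e_i(j))$ (which pair position $i{-}m{+}1$ with position $i{+}m$ and insert a copy of $\gamma_0=\sum b_s\otimes b_s$) and re-antisymmetrizing via the outer $F_i$, one is effectively antisymmetrizing $n+1$ vectors in an $n$-dimensional space along a single antisymmetrizer --- which kills everything. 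Making this rigorous means showing that the alternating sum over $j$ collapses the two separate antisymmetrizers of $F_i$ into one antisymmetrizer on all $n+1$ strands; this is a local identity in $\End(V^{\otimes(n+1)})$ that can be verified by a direct expansion in the orthonormal basis. I expect \textbf{the main obstacle} to be precisely this bookkeeping: proving that the $j$-sum with coefficients $c_i(j)$ telescopes the ``split'' symmetry into the ``full'' symmetry --- in other words, the identity
$$
\sum_{j=0}^i(-1)^jc_i(j)\,F_i\,e_i(j)\,F_i \;\equiv\; \text{(nonzero scalar)}\cdot a(1,n+1) \pmod{\ker\nu},
$$
or its image-side analogue. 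This is a finite but delicate identity about antisymmetrizers and contraction operators, and it is the combinatorial heart of the argument; everything else (equivariance, reduction to $i=\half$ for $E$ itself, translating between $\tau$ and $\nu$) is formal given the results already in the excerpt.
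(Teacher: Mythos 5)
Your overall framework is right---reduce to Lemma \ref{lem:ker1} via $\ker(\nu)=\ker(\tau)$, interpret $E_i$ through a determinant-type relation, and read the coefficients $c_i(j)$ as inverse stabilizer orders---but the pivotal step is set up with the wrong pair $(S,S')$, and this creates a genuine gap. By taking $S=[1,n+1]$ entirely in the top row and $S'=[r+1,r+n+1]$ entirely in the bottom row, the element $b(S,S',\beta)$ you obtain is the single big antisymmetrizer $a(1,n+1)=E_0$, and you are then forced to prove the ``collapsing'' congruence $\sum_j(-1)^jc_i(j)F_ie_i(j)F_i\equiv c\cdot a(1,n+1)\pmod{\ker\nu}$, which you yourself flag as the main obstacle. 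That congruence is not a formal identity that can be verified ``locally in $\End(V^{\otimes(n+1)})$'' independently of $\dim V=n$: already for $n+1=2$ one has $E_1=1-e_1$ and $a(1,2)=1-s_1$, whose images $1-\phi$ and $1-P$ are not proportional once $\dim V\geq 2$. It holds only because \emph{both} sides lie in $\ker\nu$, so proving it is equivalent to the proposition itself---the route is circular unless you carry out the unexecuted basis computation. Relatedly, your claim that the weighted $j$-sum ``reproduces exactly the expansion of $\det\bigl((v_{i_k},v_{j_l})\bigr)$'' fails for your unmixed $S,S'$: every edge of a diagram $D_\pi(S,S',\beta)$ joins $S$ to $S'$, so no diagram of $E_i$ containing a horizontal arc (i.e.\ any $j\geq 1$ term) can occur in that expansion.

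The missing idea is to let $S$ and $S'$ straddle the two rows. The paper takes $S_i=\{1,\dots,i\}\cup\{r+i+1,\dots,r+n+1\}$ and $S'_i=\{i+1,\dots,n+1\}\cup\{r+1,\dots,r+i\}$, with $\beta_i$ the vertical pairing of the remaining vertices. With this choice the $(n+1)!$ diagrams $D_\pi(S_i,S'_i,\beta_i)$ include exactly the diagrams with horizontal arcs that appear in $E_i$, and under the action of $H\times H$ with $H=\Sym\{1,\dots,i\}\times\Sym\{i+1,\dots,n+1\}$ they fall into $i+1$ orbits with representatives $e_i(j)$ and stabilizers of order $c_i(j)^{-1}$, so that $E_i=b(S_i,S'_i,\beta_i)$ \emph{on the nose}. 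The proposition is then immediate from Lemma \ref{lem:ker1}, with no ``modulo $\ker\nu$'' manipulation, no operator identity, and no Laplace-type expansion needed. Your stabilizer bookkeeping for $c_i(j)$ is exactly what makes this work, so the repair is small, but as written the argument does not go through.
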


\begin{proof}
We shall show that each element $E_i$ is of the form
$b(S,S',\beta)$ for some triple $S,S',\beta$.

For this, let $S_i=\{1,2,\dots,i,i+1+r,i+2+r,\dots,n+1+r\}$,
and $S_i'=\{i+1,i+2,\dots,n+1,r+1,r+2,\dots, r+i\}$. Then $|S_i|=|S'_i|=n+1$
and $S_i\cap S'_i=\emptyset$. In Figure \ref{fig8}, the points of $S_i$ are denoted
by $\circ$, those of $S'_i$ by $*$ and the others by $\bullet$.

\begin{figure}[hb]
\begin{center}
\begin{tikzpicture}[scale=1]

%% draw filled dots at required points at bottom
\foreach \x in {1,2,5}
\draw node at (\x,0){*};
\foreach \x in {6,7,10}
\draw(\x,0) circle (0.1cm);
\foreach \x in {11,14}
\filldraw(\x,0) circle (0.1cm);
%% draw filled dots at required points on top
\foreach \x in {1,2,5}
\draw(\x,2) circle (0.1cm);
\foreach \x in {6,7,10}
\draw node at (\x,2){*};
\foreach \x in {11,14}
\filldraw(\x,2) circle (0.1cm);
%% draw straight lines where required
\foreach \x in {11,14}
\draw (\x,2)--(\x,0);

%% label the points in top row
\draw node[above] at (1,2){1};\draw node[above] at (2,2){2};\draw node[above] at (5,2){i};
\draw node[above] at (6,2){i+1};\draw node[above] at (10,2){n+1};
\draw node[above] at (11,2){n+2};\draw node[above] at (14,2){r};

%% label the points in bottom row
\draw node[below] at (1,0){r+1};\draw node[below] at (2,0){r+2};\draw node[below] at (5,0){r+i};
\draw node[below] at (6,0){i+1+r};\draw node[below] at (10,0){n+1+r};
\draw node[below] at (14,0){2r};

%% draw curved lines where required, top
%\draw(5,3) .. controls (6,2.2) and (9,2.2) .. (10,3);
%\draw(7,3) .. controls (7.5,2) and (9.5,2) .. (10,3);
%\draw(6,3) .. controls (7,1.5) and (10,1.5) .. (11,3);

%% draw curved lines where required, bottom
%\draw(5,0) .. controls (6,0.8) and (9,0.8) .. (10,0);
%\draw(7,0) .. controls (7.5,1) and (9.5,1) .. (10,0);
%\draw(6,0) .. controls (7,1.5) and (10,1.5) .. (11,0);

%% draw ... between straight lines
\draw(3.5,0) node {\large{$\cdots$}};\draw(8.5,0) node {\large{$\cdots$}};
\draw(12.5,0) node {\large{$\cdots$}};
\draw(3.5,2) node {\large{$\cdots$}};\draw(8.5,2) node {\large{$\cdots$}};
\draw(12.5,2) node {\large{$\cdots$}};
%\draw(12.5,1.5) node {\large{$\cdots$}};

\end{tikzpicture}
%\centerline{Figure 5}
\end{center}
\caption{ }
\label{fig8}
\end{figure}

With $S,S'$ as above, $\{1,\dots,2r\}\setminus(S_i\amalg S_i')=
\{n+2,n+3,\dots,r,n+2+r,n+3+r,\dots,2r\}$, and we take $\beta_i$ to be the pairing
$(n+2,n+2+r),\dots,(r,2r)$.
We shall show that
\be\label{pf:3}
E_i=b(S_i,S_i',\beta_i).
\ee

Now $b(S_i,S_i',\beta_i)$ is the alternating sum of the set $\CS_i$
of $(n+1)!$ diagrams of the form Figure \ref{fig8} above,
in which each point of $S_i$ is joined to a point of $S_i'$. Let $H=\Sym\{1,\dots,i\}
\times\Sym\{i+1,\dots,n+1\}$, regarded as a subgroup of the algebra $B_r(n)$. Clearly
$H\times H$ acts on this set $\CS_i$ of diagrams, via $(h,h').D=hD{h'}\inv$,
preserving the number of horizontal arcs.
Moreover each diagram in $\CS_i$ may be transformed by $H\times H$ into a unique diagram
$e_i(j)$ for some $j=0,1,\dots,i$. That is, $H\times H$ has $i+1$ orbits on $\CS_i$,
and the $e_i(j)$ form a set of orbit representatives. It is therefore clear that
the alternating sum of the diagrams in the orbit of $e_i(j)$ is a scalar times
$F_ie_i(j)F_i$. If the trivial diagram (in the orbit of $e_i(0)=1$) has sign
$+1$, then observing that $e_i(j+1)$ is obtained from $e_i(j)$ by a simple interchange in $S_i'$
(of $r+i-j$ with $i+j+1$), we see that $b(S_i,S_i',\beta_i)=\sum_j(-1)^jc_i(j)F_ie_i(j)F_i$, where
$c_i(j)$ is the inverse of $|\{(h,h')\in H\times H\mid he_i(j)h'=e_i(j)\}|$. This proves
\eqref{pf:3}, and we are done.
\end{proof}

\section{Some computations in the Brauer algebra}

In this section, we carry out some necessary
computations in the Brauer algebra $B_r(\delta)$, where $\delta$ is arbitrary,
and apply them to prove a key annihilation result (Theorem \ref{thm:ann} below).

\subsection{Arcs in the Brauer algebra}\label{ss:basic}

\begin{lemma}\label{lem:comp1}
(i) In the group algebra $K\Sym_r$, we have, in the notation of \S\ref{ss:alt},
\[
a(\Sym_n)=a(\Sym_{n-1})-|\Sym_{n-2}|\inv a(\Sym_{n-1})s_{n-1}a(\Sym_{n-1}).
\]
(ii) In the $K$-algebra $B_r(\delta)$, let $F=a(1,i)a(i+1,s)$ and
$F'=a(1,i-1)a(i+2,s)$. Then for $2\leq i\leq s-2$,
\[
\begin{aligned}
e_{i,i+1}Fe_{i,i+1}=&(\delta-s+2)F'e_{i,i+1}+
[(i-2)!(s-i-2)!]\inv e_{i,i+1}F'e_{i-1,i+2}F'.
\end{aligned}
\]
(iii) The statement (ii) above remains true if $i=1$, provided that $e_{0,3}$
is interpreted as $0$, and $a(k,l)=1$ whenever $k\geq l$.
\end{lemma}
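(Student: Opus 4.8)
The plan for (i) is to use the decomposition of $\Sym_n$ into $\Sym_{n-1}$--$\Sym_{n-1}$ double cosets, of which there are exactly two: $\Sym_n=\Sym_{n-1}\sqcup\Sym_{n-1}s_{n-1}\Sym_{n-1}$, the second double coset having $|\Sym_{n-1}|^2/|\Sym_{n-2}|$ elements because $\Sym_{n-1}\cap s_{n-1}\Sym_{n-1}s_{n-1}=\Sym_{n-2}$. Splitting $a(\Sym_n)=\sum_{g\in\Sym_n}\ve(g)g$ according to this decomposition, the first piece is $a(\Sym_{n-1})$, while expanding $a(\Sym_{n-1})s_{n-1}a(\Sym_{n-1})=\sum_{g,h\in\Sym_{n-1}}\ve(g)\ve(h)\,gs_{n-1}h$ and using $\ve(g)\ve(h)=-\ve(gs_{n-1}h)$ together with the fact that each element of the double coset is written as $gs_{n-1}h$ in exactly $|\Sym_{n-2}|$ ways shows that this product equals $-|\Sym_{n-2}|\sum_{x\in\Sym_{n-1}s_{n-1}\Sym_{n-1}}\ve(x)x$. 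Rearranging gives the stated identity.

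For (ii), the plan is to ``peel off'' the outermost points of the two alternating factors, using (i) to write $a(1,i)=a(1,i-1)-[(i-2)!]\inv a(1,i-1)s_{i-1}a(1,i-1)$, and its analogue under reversing the order of $[1,s]$ to write $a(i+1,s)=a(i+2,s)-[(s-i-2)!]\inv a(i+2,s)s_{i+1}a(i+2,s)$. Substituting these into $F=a(1,i)a(i+1,s)$ and expanding $e_{i,i+1}Fe_{i,i+1}$ produces four terms. Since $a(1,i-1)$ and $a(i+2,s)$ are supported on $[1,i-1]$ and $[i+2,s]$, they commute with $e_{i,i+1}$ and with each other, so in each of the four terms they may be pushed to the far left and the far right, leaving in the middle one of $e_{i,i+1}^2$, $e_{i,i+1}s_{i-1}e_{i,i+1}$, $e_{i,i+1}s_{i+1}e_{i,i+1}$, or $e_{i,i+1}s_{i-1}s_{i+1}e_{i,i+1}$. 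Using the relations $e_i^2=\delta e_i$ and $e_is_{i\pm1}e_i=e_i$ of Lemma~\ref{lem:brprops} together with the elementary identity $a(H)^2=|H|a(H)$, the first three terms each collapse to a scalar multiple of $F'e_{i,i+1}$, where $F'=a(1,i-1)a(i+2,s)$; the three scalars are $\delta$, $-(i-1)$ and $-(s-i-1)$, with sum $\delta-s+2$. For the fourth term one needs the diagrammatic identity
$$e_{i,i+1}s_{i-1}s_{i+1}e_{i,i+1}=e_{i,i+1}e_{i-1,i+2},$$
which follows from the conjugation rule $\sigma e_{a,b}\sigma\inv=e_{\sigma(a),\sigma(b)}$ applied with $\sigma=s_{i-1}s_{i+1}$ (so that $\sigma$ carries $\{i,i+1\}$ to $\{i-1,i+2\}$), the residual permutation factor being absorbed by the adjacent copy of $e_{i,i+1}$; alternatively it is checked by direct concatenation of the three diagrams. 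Restoring the $a(\cdot)$-factors then turns the fourth term into $[(i-2)!(s-i-2)!]\inv e_{i,i+1}F'e_{i-1,i+2}F'$, which completes (ii).

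For (iii), when $i=1$ the factor $a(1,1)=1$, so only the second (order-reversed) expansion above is available and the ``cross'' term involving $a(1,i-1)s_{i-1}a(1,i-1)$ simply does not occur; the remaining computation is the $i=1$ instance of the argument above, using $e_1s_2e_1=e_1$, and yields $e_{1,2}a(2,s)e_{1,2}=(\delta-s+2)\,a(3,s)\,e_{1,2}$, in accord with the conventions $a(k,l)=1$ for $k\ge l$ and $e_{0,3}=0$. I expect the main obstacle to be the bookkeeping in the fourth term of (ii): one must keep careful track of which through-strings of the two copies of $e_{i,i+1}$ and of the transpositions $s_{i-1},s_{i+1}$ survive the concatenation, and verify that no free loop is created, so that $e_{i,i+1}s_{i-1}s_{i+1}e_{i,i+1}$ equals $e_{i,i+1}e_{i-1,i+2}$ on the nose (with coefficient $1$) and the scalar produced is precisely $[(i-2)!(s-i-2)!]\inv$.
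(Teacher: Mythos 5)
Your proposal is correct and follows exactly the route of the paper's (much terser) proof: part (i) via the double coset decomposition $\Sym_n=\Sym_{n-1}\amalg\Sym_{n-1}s_{n-1}\Sym_{n-1}$, and part (ii) by substituting the two resulting expansions of $a(1,i)$ and $a(i+1,s)$ into $F$ and reducing the four terms with the relations $e_i^2=\delta e_i$ and $e_is_{i\pm1}e_i=e_i$; your identification of the fourth term's core $e_{i,i+1}s_{i-1}s_{i+1}e_{i,i+1}=e_{i,i+1}e_{i-1,i+2}$ and the scalars $\delta,\,-(i-1),\,-(s-i-1)$ all check out. The paper leaves the "direct computation" unwritten, so your write-up is simply a complete version of the same argument.
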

\begin{proof}
The first statement is a simple consequence of the double coset decomposition
$\Sym_n=\Sym_{n-1}\amalg \Sym_{n-1}s_{n-1}\Sym_{n-1}.$

For the second, observe that from (i) we have $a(1,i)=a(1,i-1)-(i-2)!\inv a(1,i-1)s_{i-1} a(1,i-1)$,
and $a(i+1,s)=a(i+2,s)-(s-i-2)!\inv a(i+2,s)s_{i+1}a(i+2,s)$.
One now computes directly, using the relations in $B_r(\delta)$,
the key relation here being $e_is_{i\pm 1}e_i=e_i$. The third statement,
concerning the case $i=1$, follows from the above argument, but may also be
computed directly.
\end{proof}

The computation above may be usefully iterated as follows.

\begin{corollary}\label{cor:basic}
Assume that $0\leq i\leq s-i$, and that $0\leq j\leq i-1$.
For $k=0,1,\dots,i$, write $J_k=a(1,i-k)a(i+k+1,s)$, so that $J_0=F$
in Lemma \ref{lem:comp1},
$J_{i-1}=a(2i,s)$ and $J_i=a(2i+1,s)$, interpreted as $1$ if $i=2s$.
Write $e(j)=e_{i-j+1,i+j}$ for $j=0,1,\dots,i$; by convention
$e(j)=0$ for $j>i$, and we note that $e(j)J_k=J_ke(j)$ for $j\leq k$.
Then\newline
(i) We have, for all $i,s$ as above and for $j$ such that $0\leq j\leq i-1$,
\be\label{eq:eje}
\begin{aligned}
e(j+1)J_j&e(j+1)=(\delta-s+2j+2)J_{j+1}e(j+1)+\\
&((i-j-2)!(s-i-j-2)!)\inv e(j+1)J_{j+1}e(j+2)J_{j+1}.\\
\end{aligned}
\ee
(ii) The case $j=i-1$ of (i) is given by
\be\label{eq:ejei}
e(i)J_{i-1}e(i)=(\delta-s+2i)J_ie(i).
\ee
This is consistent with \eqref{eq:eje} if $e_{k,l}$ is interpreted as $0$ for $k<0$.
\newline
(iii) With the above notation, we have, for $k=0,1,\dots,i-1$,

\be\label{eq:ejebig}
\begin{aligned}
e(1)J_0e(1)e(2)\dots e(k)=&A_k J_1e(1)e(2)\dots e(k)\\
                        &+B_kJ_1 e(1)e(2)\dots e(k+1)J_{k},
\end{aligned}
\ee
where
\be\label{eq:akbk}
\begin{aligned}
A_k=&k(\delta-s+2)+k(k-1), \text{ and}\\
B_k=&\frac{1}{(i-k-1)!(s-i-k-1)!}\\
\end{aligned}
\ee
(iv) The statement (iii) remains true for $k=i$, given the conventions
for interpreting $a(p,q)(=1)$ when $p\geq q$ and $e_{p,q}(=0)$ when $p<0$.
That is,
\be\label{eq:ejebigi}
e(1)J_0e(1)e(2)\dots e(i)=A_iJ_1e(1)e(2)\dots e(i),
\ee
where $A_i=i(\delta-s+2)+i(i-1)$ is as given by the formula \eqref{eq:akbk}.
\end{corollary}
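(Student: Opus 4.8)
The plan is to obtain parts (i) and (ii) as relabelled instances of Lemma~\ref{lem:comp1}(ii)--(iii), and then to deduce (iii) and (iv) from these by induction on $k$.

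\emph{Parts (i) and (ii).} Fix $j$ with $0\le j\le i-1$ and put $m:=i-j$, so that $e(j+1)=e_{m,\,m+2j+1}$, $e(j+2)=e_{m-1,\,m+2j+2}$ and $J_j=a(1,m)a(m+2j+1,s)$. Every element occurring in \eqref{eq:eje} --- namely $J_j$, $J_{j+1}$, $e(j+1)$, $e(j+2)$ --- restricts to the identity on the nodes $m+1,\dots,m+2j$ (and on $s+1,\dots,r$), which carry only through strings. Hence there is an algebra embedding $B_{s-2j}(\delta)\hookrightarrow B_r(\delta)$ relabelling the node set $[1,s-2j]$ as $[1,m]\amalg[m+2j+1,s]$ and adjoining through strings on the remaining nodes; it carries $a(1,m)\mapsto a(1,m)$, $a(m+1,s-2j)\mapsto a(m+2j+1,s)$, $e_{m,m+1}\mapsto e(j+1)$ and $e_{m-1,m+2}\mapsto e(j+2)$. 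Applying it to the identity of Lemma~\ref{lem:comp1}(ii) with ``$i$''$\,=m$ and ``$s$''$\,=s-2j$ (and to Lemma~\ref{lem:comp1}(iii) when $m=1$, the extreme boundary values being handled directly via the conventions $a(p,q)=1$ for $p\ge q$) yields \eqref{eq:eje}, since $\delta-(s-2j)+2=\delta-s+2j+2$, $(m-2)!=(i-j-2)!$ and $(s-2j)-m-2=s-i-j-2$. Part (ii) is then the case $j=i-1$, i.e.\ $m=1$: here $e(j+2)=e(i+1)=0$ by convention, only the first summand of \eqref{eq:eje} survives, and we obtain \eqref{eq:ejei}.

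\emph{Parts (iii) and (iv).} Induct on $k$. For $k=1$, \eqref{eq:ejebig} is \eqref{eq:eje} at $j=0$ combined with $e(1)J_1=J_1e(1)$ (so $e(1)J_1e(2)J_1=J_1e(1)e(2)J_1$) and the trivial identities $A_1=\delta-s+2$, $B_1=[(i-2)!(s-i-2)!]\inv$. For the step $k\to k+1$ with $0\le k\le i-1$, multiply \eqref{eq:ejebig} on the right by $e(k+1)$. The first summand becomes $A_kJ_1e(1)\dots e(k+1)$; in the second summand $J_k$ now appears in the configuration $\dots e(k)\,e(k+1)J_ke(k+1)$, so \eqref{eq:eje} at $j=k$ rewrites the sub-word $e(k+1)J_ke(k+1)$. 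Since $e(l)$ commutes with $J_{k+1}$ for $l\le k+1$ (disjoint supports), and
\[
J_1J_{k+1}=(i-k-1)!\,(s-i-k-1)!\;J_1
\]
because $\Sym[1,i-k-1]\le\Sym[1,i-1]$, $\Sym[i+k+2,s]\le\Sym[i+2,s]$ and $a(H)a(H')=|H'|\,a(H)$ when $H'\le H$, one pushes $J_{k+1}$ to the left, absorbs it into $J_1$, and uses $B_k\cdot(i-k-1)!(s-i-k-1)!=1$. The right-hand side collapses to $A_{k+1}J_1e(1)\dots e(k+1)+B_{k+1}J_1e(1)\dots e(k+2)J_{k+1}$ with
\[
A_{k+1}=A_k+(\delta-s+2k+2)=(k+1)(\delta-s+2)+k(k+1),\qquad
B_{k+1}=\frac{B_k\,(i-k-1)!(s-i-k-1)!}{(i-k-2)!(s-i-k-2)!}=\frac{1}{(i-k-2)!(s-i-k-2)!},
\]
which are precisely the formulas \eqref{eq:akbk}. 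Finally, running this step once more from $k=i-1$ to $k=i$, one uses \eqref{eq:ejei} in place of \eqref{eq:eje} (it has no $B$-term), and the prospective $B_i$-summand of \eqref{eq:ejebig} vanishes because it contains $e(i+1)=0$; this gives \eqref{eq:ejebigi} with $A_i=i(\delta-s+2)+i(i-1)$.

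The one genuinely delicate point is the relabelling in (i): one must verify that the nodes $m+1,\dots,m+2j$ occur only as through strings in \emph{every} element appearing in Lemma~\ref{lem:comp1}(ii), so that adjoining them via an algebra homomorphism is legitimate --- equivalently, one simply repeats the short computation of Lemma~\ref{lem:comp1}(ii) verbatim in the presence of these extra strings, the relevant relation $e_{a,b}s_{c,d}e_{a,b}=e_{a,b}$ (for $\{c,d\}$ sharing exactly one endpoint with $\{a,b\}$) being unaffected. Everything else is bookkeeping: keeping track of the conventions $a(p,q)=1$ for $p\ge q$ and $e(j)=0$ for $j>i$, which conveniently absorb the formally ill-defined factorials $(-1)!$ that appear at the extreme values of $j$ and $k$, and checking the two scalar recursions for $A_k$ and $B_k$.
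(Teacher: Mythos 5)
Your argument is correct and follows the same route as the paper's own proof: parts (i)--(ii) are obtained by transporting Lemma \ref{lem:comp1}(ii)--(iii) along the evident relabelling embedding (the paper simply calls this ``a translation of Lemma \ref{lem:comp1} into the present context''), and parts (iii)--(iv) by induction, right-multiplying by $e(k+1)$ and applying \eqref{eq:eje} to the terminal sub-word $e(k+1)J_ke(k+1)$; your recursions $A_{k+1}=A_k+(\delta-s+2k+2)$ and $B_{k+1}=B_k(i-k-1)(s-i-k-1)$ coincide with \eqref{eq:recursion}. One small repair: you state the inductive step for $0\le k\le i-1$ but take $k=1$ as the base case, so the $k=0$ instance of \eqref{eq:ejebig} (which is part of the assertion, and is used with $k=0$ in the proof of Theorem \ref{thm:ann}) is never established; it reads $e(1)J_0=B_0\,J_1e(1)J_0$ with $A_0=0$ and $B_0=[(i-1)!(s-i-1)!]^{-1}$, and follows from $J_1e(1)J_0=e(1)J_1J_0=(i-1)!(s-i-1)!\,e(1)J_0$, i.e.\ the same absorption identity $a(H')a(H)=|H'|a(H)$ for $H'\le H$ that you already invoke in the inductive step.
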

\begin{proof}
For $j<i-1$, the statement (i) is simply a translation of Lemma \ref{lem:comp1} into the
present context. When $j=i-1$ or $i$, straightforward calculation shows that the
formula \eqref{eq:eje} remains true, given the specified conventions. This proves
(i) and (ii).

The assertion (iii) is proved by induction on $k$. First observe that
the case $k=0$ asserts that
$e(1)J_0=B_0J_1e(1)J_0$, where $B_0=\frac{1}{(i-1)!(s-i-1)!}$, which is easily
checked. Now suppose that for some fixed $k$ with $0\leq k\leq i-1$, we have
$$
e(1)J_0e(1)\dots e(k)=A_k J_1e(1)\dots e(k)+B_kJ_1 e(1)\dots e(k+1)J_{k}.
$$
Multiplying on the right by $e(k+1)$ and applying \eqref{eq:eje} to the second
term, which terminates with $e(k+1)J_ke(k+1)$, a short calculation
shows that we obtain
$$
e(1)J_0e(1)\dots e(k+1)=A_{k+1} J_1e(1)\dots e(k+1)+B_{k+1}J_1 e(1)\dots e(k+2)J_{k+1},
$$
where
\be\label{eq:recursion}
\begin{aligned}
A_{k+1}&=A_k+B_k(\delta-s+2k+2)(i-k-1)!(s-i-k-1)!\text{ and}\\
B_{k+1}&=B_k(i-k-1)(s-i-k-1).\\
\end{aligned}
\ee

The recursion \eqref{eq:recursion} has the unique solution given in \eqref{eq:akbk}.

Finally, (iv) follows from the case $k=i-1$ of (iii), noting that $J_i=a(2i+1,s)$,
and that the case $j=i-1$ of (i) yields that $e(i)J_{i-1}e(i)=(\delta-s+2i)e(i)J_i$.
The proof of this last formula involves separate consideration of the cases $s=2i$ and
$s>2i$.
\end{proof}

\subsection{A computation in the symmetric group algebra}

Fix an integer $s$ and for $i$ such that $0\leq i\leq s-i$, define
$F_i(s):=a(1,i)a(i+1,s)$, with the usual conventions.

\begin{lemma}\label{lem:lr}
Write $K\Sym_r=\oplus_{\lambda}I_\lambda$ for the usual canonical decomposition
of the group algebra of $\Sym_r$ into simple two-sided ideals.
Then
\begin{enumerate}
\item The ideal $\langle F_i(s)\rangle$ of $K\Sym_r$ generated by $F_i(s)$ is the
sum of those $I_\lambda$ such that $\lambda$ has at least $s-i$ boxes in its first
column and $s$ boxes in its first and second column.
\item We have, for $1\leq i\leq \frac{s}{2}$,
$\langle F_{i-1}(s)\rangle\subseteq\langle F_i(s)\rangle$.
\item There exist elements $\alpha_i,\beta_i\in B_r(\delta)$ such that for
$i$ as in (ii), $F_{i-1}=\alpha_i F_i\beta_i$
\end{enumerate}
\end{lemma}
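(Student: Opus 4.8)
The plan is to prove the three parts of Lemma~\ref{lem:lr} in sequence, using the representation theory of $\Sym_s$ embedded in $\Sym_r$ via $\Sym(\{1,\dots,s\})$, and then transferring to $B_r(\delta)$ only for part (iii). First, for part (i), I would recall that $a(1,i)$ and $a(i+1,s)$ are (scalar multiples of) the primitive idempotents projecting onto the sign representations of $\Sym\{1,\dots,i\}$ and $\Sym\{i+1,\dots,s\}$ respectively. Hence $F_i(s)=a(1,i)a(i+1,s)$ is, up to scalar, the image under the Young symmetrizer machinery of a column-tabloid for the two-column shape with column lengths $i$ and $s-i$; more precisely, $\langle F_i(s)\rangle$ in $K\Sym_s$ is the sum of the $I_\mu$ ($\mu\vdash s$) for which $\Hom_{\Sym_s}(\sgn_{\Sym\{1,\dots,i\}}\otimes\sgn_{\Sym\{i+1,\dots,s\}}\uparrow^{\Sym_s}, S^\mu)\neq 0$. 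By Pieri's rule (or the Littlewood--Richardson rule applied to $(1^i)*(1^{s-i})$), this induced module is $\bigoplus_\mu S^\mu$ where $\mu$ runs over partitions obtained by adding a vertical strip of size $s-i$ to the column $(1^i)$ — equivalently, $\mu'$ (the conjugate) has at most two parts with $\mu'_1+\mu'_2=s$ and $\mu'_2\le i$, i.e. $\mu$ has $\ge s-i$ boxes in its first column and exactly $s$ boxes in its first two columns. Inducing up from $\Sym_s$ to $\Sym_r$ only enlarges each such $\mu$ by a horizontal strip (again Pieri), which does not affect the first-two-columns condition, giving exactly the stated description.

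For part (ii), the cleanest route is to observe that the combinatorial condition in (i) is monotone in $i$: if $\mu$ has at least $s-(i-1)$ boxes in its first column and $s$ boxes in its first two columns, then a fortiori it has at least $s-i$ boxes in its first column (and still $s$ in the first two), so $\langle F_{i-1}(s)\rangle\subseteq\langle F_i(s)\rangle$ as a containment of sums of $I_\lambda$'s. One could also argue directly: increasing $i$ by one replaces a sign-symmetrization over a smaller-by-one subgroup in one factor with a larger one in the complementary factor, and the corresponding induced module becomes ``more dominant'' in the relevant ordering. Either way this is immediate from part (i).

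For part (iii), which is where the real work lies, the containment $\langle F_{i-1}\rangle\subseteq\langle F_i\rangle$ from (ii) guarantees, since $K\Sym_r$ is semisimple, that $F_{i-1}$ lies in the two-sided ideal generated by $F_i$, hence $F_{i-1}=\sum_k\alpha_k F_i\beta_k$ for some $\alpha_k,\beta_k\in K\Sym_r\subseteq B_r(\delta)$. To get a \emph{single} pair $(\alpha_i,\beta_i)$ one uses that $F_i$ (suitably normalized) is, up to a unit scalar, a sum of primitive idempotents, so that $F_i K\Sym_r F_i$ acts on each isotypic component as a full matrix algebra; concretely, for each $\lambda$ with $I_\lambda\subseteq\langle F_i\rangle$ one picks matrix units realizing $F_{i-1}|_{I_\lambda}$ through $F_i|_{I_\lambda}$, and assembles them — the point being that $F_{i-1}$ and $F_i$ are both idempotent-like elements supported on nested ideals, so one factorization $F_{i-1}=\alpha_i F_i\beta_i$ exists with $\alpha_i=F_{i-1}u$, $\beta_i=vF_{i-1}$ for appropriate $u,v$. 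Alternatively, and perhaps more in the spirit of the paper, one derives the explicit $\alpha_i,\beta_i$ directly from Corollary~\ref{cor:basic}: equation~\eqref{eq:ejebigi} with $\delta$ specialized and $s$, $i$ chosen appropriately expresses $e(1)J_0e(1)\cdots e(i)$ as a scalar multiple of $J_1e(1)\cdots e(i)$, and iterating the reduction in Lemma~\ref{lem:comp1}(i), $a(1,i)=a(1,i-1)-(i-2)!^{-1}a(1,i-1)s_{i-1}a(1,i-1)$, lets one write $F_{i-1}$ in terms of $F_i$ conjugated by symmetric-group elements.

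The main obstacle will be producing the \emph{single} pair $(\alpha_i,\beta_i)$ rather than a sum: semisimplicity hands us a sum immediately, but collapsing it to one term requires either the matrix-units bookkeeping across all the $I_\lambda$ in $\langle F_i\rangle$, or an explicit and slightly delicate manipulation of the alternating elements $a(1,i)$, $a(i+1,s)$ using the Lemma~\ref{lem:comp1}(i) recursion together with the identity $a(1,i-1)=a(1,i)+(\text{correction})$. I expect the explicit approach to work but to require care with normalizing scalars (the factorials $c_i(j)$ appearing in Definition~\ref{def:br-elts}) and with the edge cases $i=1$ and $s=2i$, exactly as flagged in the proof of Corollary~\ref{cor:basic}; the semisimple-algebra argument is cleaner but less constructive. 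I would present the semisimple argument for existence and remark that explicit $\alpha_i,\beta_i$ can be read off from Corollary~\ref{cor:basic}.
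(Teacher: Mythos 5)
Your overall route coincides with the paper's, which disposes of the whole lemma in two sentences by invoking the (dual) Pieri rule for (i) and declaring (ii) and (iii) easy consequences; your write-up is an expansion of exactly that argument, and parts (i) and (ii) reach the correct conclusions. However, two steps need repair. First, in part (i) the passage from $\Sym_s$ to $\Sym_r$ is misstated: since $H=\Sym\{1,\dots,i\}\times\Sym\{i+1,\dots,s\}$ contains nothing of $\Sym\{s+1,\dots,r\}$, the second stage of the induction is $\Ind_{\Sym_s}^{\Sym_r}$, i.e.\ one tensors with the \emph{regular} representation of $\Sym_{r-s}$ before inducing from $\Sym_s\times\Sym_{r-s}$. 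The constituents of $\Ind_{\Sym_s}^{\Sym_r}S^\mu$ are therefore all $S^\lambda$ with $\lambda\supseteq\mu$, not merely those obtained by adding a horizontal strip; taking only horizontal strips would, for instance, wrongly exclude $I_{(1^{s+2})}$ from $\langle F_0(s)\rangle$ when $r=s+2$, even though $a(1,s)$ acts on the sign representation as the nonzero scalar $s!$. Fortunately the union over $0\le k\le i$ of $\{\lambda:\lambda\supseteq(2^k,1^{s-2k})\}$ is precisely the set described in the lemma, so the conclusion survives, but the intermediate step must be corrected.

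Second, for part (iii) you rightly identify that the issue is producing a \emph{single} pair $(\alpha_i,\beta_i)$, but your semisimple-algebra argument is incomplete without a componentwise rank inequality: in a simple block $M_{n_\lambda}(K)$ one can write $x=\alpha y\beta$ if and only if $\rank(x)\le\rank(y)$, so what is actually needed is $\rank\bigl(f_{i-1}|_{I_\lambda}\bigr)\le\rank\bigl(f_i|_{I_\lambda}\bigr)$ for every $\lambda$, where $f_i$ is the normalized idempotent $F_i(s)/(i!(s-i)!)$; the containment of ideals from (ii) alone controls only which blocks are hit, not the ranks. The inequality does hold: $\rank(f_i|_{I_\lambda})$ is the multiplicity of $S^\lambda$ in $\Ind_H^{\Sym_r}(\ve)$, at the $\Sym_s$ level $\Ind^{\Sym_s}(\ve)$ for $i-1$ is a direct summand of that for $i$ (each constituent $(2^k,1^{s-2k})$ occurring once), and applying the additive functor $\Ind_{\Sym_s}^{\Sym_r}$ preserves this. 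With that in hand, block-diagonal choices of $\alpha_i,\beta_i$ finish the proof. I would not rely on your alternative explicit construction via Lemma \ref{lem:comp1}(i): that identity expresses $a(1,i)$ in terms of $a(1,i-1)$, i.e.\ it runs in the wrong direction, and it is not clear it yields a single-term factorization.
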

\begin{proof}
The first statement follows easily from the Littlewood-Richardson rule. In fact one only
requires the (dual of) the Pieri rule. The second and third statements follow
easily from (i).
\end{proof}

\subsection{The annihilation theorem}

We shall prove the following result.
\begin{theorem}\label{thm:ann}
Let $E_i\in B_r(n)$, $i=0,1,\dots,\half$ be the elements defined
in Definition \ref{def:br-elts}(iii), and assume that $r\geq n+1$. Then
for $j=1,2,\dots,n$, we have $e_jE_i=E_ie_j=0$.
\end{theorem}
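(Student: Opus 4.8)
The strategy is to fix $j\in\{1,\dots,n\}$ and analyse $e_jE_i$ (the argument for $E_ie_j$ being symmetric, since all the building blocks $F_i$ and $e_i(j)$ are invariant under the anti-automorphism of $B_r(n)$ that reverses diagrams). Recall $E_i=\sum_{j'=0}^i(-1)^{j'}c_i(j')F_ie_i(j')F_i$, where $F_i=a(1,i)a(i+1,n+1)$. The key point is that $e_j$ is $e_{j,j+1}$ with $1\le j\le n$, so the pair $\{j,j+1\}$ lies inside $[1,n+1]$, which is precisely the support of the alternating element $F_i=a(\Sym[1,i])a(\Sym[i+1,n+1])$. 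Two cases arise according to whether $s_{j,j+1}$ lies in the parabolic $H=\Sym[1,i]\times\Sym[i+1,n+1]$ or not, i.e.\ whether $j\ne i$ or $j=i$.

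First I would dispose of the case $j\ne i$: then $s_{j,j+1}\in H$, so by Lemma~\ref{lem:capzero} we have $e_{j,j+1}F_i=0$, hence $e_jE_i=0$ immediately, because each term of $E_i$ begins with $F_i$ (on the left). So the entire content of the theorem is the case $j=i$, i.e.\ showing $e_iE_i=0$. Here $e_i=e_{i,i+1}$ and $e_i(j')=e_{i,i+1}e_{i-1,i+2}\cdots e_{i-j'+1,i+j'}$, so $e_ie_i(j')=\delta\,e_i(j')$ with $\delta=n$ when $j'\ge1$, while $e_ie_i(0)=e_{i,i+1}$. The plan is to compute $e_iF_ie_i(j')F_i$ for each $j'$ and find the alternating sum vanishes. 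The crucial computational input is Corollary~\ref{cor:basic}, applied with $s=n+1$ and $\delta=n$: the parameter $\delta-s+2j+2=2j+1$ there, and more importantly formula~\eqref{eq:ejebigi} with $s=n+1$, $\delta=n$ gives $A_i=i(\delta-s+2)+i(i-1)=i+i(i-1)=i^2$, etc. I would use~\eqref{eq:ejebig}/\eqref{eq:ejebigi} to rewrite each $e_iF_ie_i(j')$ — noting $e_i=e(1)$ in that notation and $F_i=J_0$ — as a combination of terms $J_1e(1)\cdots e(k)$ and $J_1e(1)\cdots e(k+1)J_k$, then reassemble with the trailing $F_i$.

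The main obstacle will be the bookkeeping in this reassembly: after applying Corollary~\ref{cor:basic} one must track how the residual alternating factors $J_k=a(1,i-k)a(i+k+1,n+1)$ interact with the trailing $F_i=a(1,i)a(i+1,n+1)$ and with the arcs of $e_i(j')$, and then verify that the coefficients $c_i(j')=\bigl((i-j')!(n+1-i-j')!(j'!)^2\bigr)^{-1}$ together with the combinatorial factors $A_k,B_k$ from~\eqref{eq:akbk} produce a telescoping alternating sum that collapses to zero. A cleaner route, which I would try first, is to exploit Lemma~\ref{lem:lr}(iii): since $F_{i-1}=\alpha_iF_i\beta_i$ for suitable $\alpha_i,\beta_i$, one can hope to relate $e_iE_i$ to $E_{i-1}$ or to an expression visibly annihilated by the earlier reductions, and set up an induction on $i$ with base case $i=0$ (where $E_0=F_0=a(1,n+1)$ and $e_iE_0=e_{0,1}\cdots$ — here $j$ ranges over $1,\dots,n$, all of which lie in the support, so Lemma~\ref{lem:capzero} applies directly). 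Either way, the heart of the matter is the identity $e_iE_i=0$, and I expect that verifying the coefficient cancellation — reconciling $c_i(j')$ with the $A_k,B_k$ recursion — is the step requiring genuine care rather than routine manipulation.
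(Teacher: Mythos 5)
Your plan is essentially the paper's own proof: reduce to the single case $e_iE_i=0$ via Lemma~\ref{lem:capzero} and the $*$-involution, then apply Corollary~\ref{cor:basic} with $\delta=n$, $s=n+1$ (so $A_k=k^2$) to expand each $e_iF_ie_i(k)F_i$ as a combination of $F_i(1)e_i(k)F_i$ and $F_i(1)e_i(k+1)F_i$, and check that the coefficients $c_i(k)$ make consecutive terms cancel — which is exactly the computation the paper carries out. The alternative inductive route via Lemma~\ref{lem:lr}(iii) that you float is not needed and is not what the paper does; the direct coefficient cancellation you correctly flag as the crux does go through.
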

\begin{proof}
It is clear from Lemma \ref{lem:capzero} that $e_jF_i=F_ie_j=0$ for
$j\in\{1,2,\dots,n+1\}$, $j\neq i$. Hence to prove the theorem
it suffices to prove that
\be\label{eq:eezero}
e_iE_i=E_ie_i=0.
\ee
Moreover, since we have $E_I^*=E_i$ and $e_i^*=e_i$ where $^*$ is the
cellular involution of $B_r(n)$ (reflection in a horizontal),
to prove \eqref{eq:eezero}, it suffices to prove that
$e_iE_i=0$, since this implies that $(e_iE_i)^*=E_i^*e_i^*=E_ie_i=0$.
Thus we are reduced to proving that $e_iE_i=0$.

Maintaining the notation of Definition \ref{def:br-elts}, define elements
$F_i(k)=a(1,i-k)a(i+k+1,n+1)$ for $k=0,1,\dots,i$, with the usual conventions
applying. Thus $F_i(0)=F_i$. The $F_i(k)$ are analogues of the elements $J_k$
of Corollary \ref{cor:basic}, and translating (iii) of that Corollary
into the notation of the elements in Definition \ref{def:br-elts}
we obtain, bearing in mind that here $\delta=n$ and $s=n+1$,
$$
e_iF_ie_i(k)=k^2F_i(1)e_i(k)+\frac{1}{(i-k-1)!(n-i-k)!}F_i(1)e_i(k+1)F_i(k),
$$
for $k=0,1,2,\dots,i$. Note that when $k=0$ the first term vanishes,
and when $k=i$, the second term vanishes, given our conventions.

It follows that for $k=0,1,2,\dots,i$, with the usual notaional conventions,
\be\label{eq:eitermk}
e_iF_ie_i(k)F_i=k^2F_i(1)e_i(k)F_i+(i-k)(n+1-i-k)F_i(1)e(k+1)F_i.
\ee

It follows from \eqref{eq:eitermk} that $e_iE_i$ is a linear combination of
$F_i(1)e_i(j)F_i$, for $j=1,2,\dots,i$. Moreover, also by
\eqref{eq:eitermk}, the coefficient of $F_i(1)e_i(k)F_i$ in $e_iE_i$
is, in the notation of Definition \ref{def:br-elts}(iii),
$$
(-1)^k\left(k^2c_i(k)-(i-k+1)(n+2-i-k)c_i(k-1)\right),
$$
and using the
explicit values of the $c_i(j)$, this is equal to
$$
\begin{aligned}
(-1)^k\{\frac{1}{(i-k)!(n+1-i-k)!(k-1)!^2}&\\
-(i-k+1)(n+2-i-k)&\frac{1}{(i-k+1)!(n+2-i-k)!(k-1)!^2}\},\\
\end{aligned}
$$
which is equal to zero.

This shows that $e_iE_i=0$, and hence completes the proof of the theorem.
\end{proof}

\begin{corollary}\label{cor:ann}
Let $D$ be any diagram in $B_{n+1}(n)\subseteq B_r(n)$ which has fewer
than $n+1$ through strings (i.e., which has a horizontal arc). Then
for $i=0,1,2,\dots,\half$, $DE_i=E_iD=0$.
\end{corollary}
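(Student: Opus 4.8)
The plan is to reduce an arbitrary diagram $D\in B_{n+1}(n)$ with a horizontal arc to the generator $e_j$ for some $j\in\{1,\dots,n\}$, modulo left and right multiplication by elements of $B_{n+1}(n)$, and then invoke Theorem \ref{thm:ann}. First I would recall the standard normal form for Brauer diagrams already used in the proof of Proposition \ref{prop:commute}: any diagram $D$ with at least one horizontal arc can be written as $D=\sigma_1\, D_0\,\sigma_2\inv$ where $\sigma_1,\sigma_2\in\Sym_{n+1}$ (viewed inside $B_{n+1}(n)\subseteq B_r(n)$) and $D_0$ is one of the ``model'' diagrams $l(s)$ of Figure \ref{fig6} with $s\geq 1$; since $D$ has a horizontal arc, $s\geq 1$, so $D_0$ has a top horizontal arc joining two adjacent nodes, i.e.\ $D_0=e_1\, D_0'$ for a diagram $D_0'$ built from the remaining arcs. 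Hence $D=\sigma_1 e_1 D_0'\sigma_2\inv$. Here I am using that $e_1=e_{1,2}$ and that the first horizontal arc of $l(s)$ joins nodes $1$ and $2$ on the top row; if the model diagram one prefers has its horizontal arc elsewhere, conjugate by an appropriate permutation in $\Sym_{n+1}$ to move it into position $\{1,2\}$.

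Next I would observe that conjugating $e_1$ by permutations in $\Sym_{n+1}$ only produces elements $e_{k,l}$ with $1\leq k<l\leq n+1$, and that each such $e_{k,l}$ is in turn of the form $\sigma e_j\sigma\inv$ for some $j\in\{1,\dots,n\}$ and $\sigma\in\Sym_{n+1}$ (move the pair $\{k,l\}$ to $\{j,j+1\}$). Combining, we may write $D=x\, e_j\, y$ with $x,y\in B_{n+1}(n)$ and $j\in\{1,\dots,n\}$: indeed, absorb $\sigma_1$ and the conjugating permutation into $x$, and $D_0'\sigma_2\inv$ together with the conjugating permutation into $y$. Then
\[
D E_i = x\, e_j\, y\, E_i,\qquad E_i D = E_i\, x\, e_j\, y.
\]
For the first, I want $e_j\,y\,E_i=0$; for the second, $E_i\,x\,e_j=0$. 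Neither follows immediately from Theorem \ref{thm:ann}, which only gives $e_jE_i=E_ie_j=0$, because $y$ and $x$ sit in the wrong place. The clean fix is to put $D$ itself, not $e_j$, in contact with $E_i$: write $D=x'\,e_{k,l}\,y'$ where now $x'\in B_{n+1}(n)$ is a product of \emph{permutations only} (so that $x' e_{k,l}{x'}\inv=e_j$ up to a further permutation conjugacy) --- more precisely, from $D=\sigma_1 e_1 D_0'\sigma_2\inv$ I factor as $D=\sigma_1\,e_1\,(D_0'\sigma_2\inv)$ and handle $DE_i$ and $E_iD$ separately: for $E_iD=E_i\sigma_1 e_1(\cdots)$, note $E_i\sigma_1=\pm E_i$ is false in general, so instead I use that $\sigma_1 e_1\sigma_1\inv=e_{k,l}$ for the pair $\{k,l\}=\{\sigma_1(1),\sigma_1(2)\}\subseteq[1,n+1]$, whence $E_iD=(E_ie_{k,l})\,\sigma_1 D_0'\sigma_2\inv$, and $E_ie_{k,l}=0$ by Theorem \ref{thm:ann} since $e_{k,l}=\sigma e_j\sigma\inv$ gives $E_ie_{k,l}=E_i\sigma e_j\sigma\inv$ --- wait, that again has $\sigma$ in the way.

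So the genuinely correct route, and the one I would commit to, is: \textbf{reduce $D$ so that an $e_{k,l}$ with $1\leq k<l\leq n+1$ is \emph{adjacent} to $E_i$}, and strengthen Theorem \ref{thm:ann} in passing. From Lemma \ref{lem:capzero} we already know $e_{k,l}F_i=F_ie_{k,l}=0$ for all pairs $\{k,l\}\subseteq[1,n+1]$ \emph{except} possibly those pairs both of whose elements lie in one of the two alternating blocks $[1,i]$ or $[i+1,n+1]$; and for those exceptional pairs one checks, exactly as in the proof of Theorem \ref{thm:ann} via the identity \eqref{eq:eitermk} and the computation of $c_i(j)$, that $e_{k,l}E_i=E_ie_{k,l}=0$ as well (the argument there is symmetric in the two blocks and in the choice of adjacent transposition, since conjugating $e_i$ by $\Sym\{1,\dots,i\}\times\Sym\{i+1,\dots,n+1\}$ permutes the summands $F_ie_i(j)F_i$ of $E_i$ among themselves). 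Granting this, write $D=\sigma_1 e_1 D'\sigma_2\inv$ as above with $\sigma_1,\sigma_2\in\Sym_{n+1}$ and $D'\in B_{n+1}(n)$; then $E_iD=E_i\sigma_1 e_1 D'\sigma_2\inv$, and since $\sigma_1\inv$ maps $e_1$ to $e_{k,l}$ with $\{k,l\}=\{\sigma_1\inv(1),\sigma_1\inv(2)\}\subseteq[1,n+1]$, we get $E_i\sigma_1 e_1=E_i e_{k,l}\sigma_1=0$; similarly $e_1 D'\sigma_2\inv$ ends (after pushing $\sigma_2\inv$ and reorganizing $D'$) in an $e_{k',l'}$ with $\{k',l'\}\subseteq[1,n+1]$ abutting $E_i$ from the right, giving $DE_i=0$. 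The main obstacle --- and the step I would write most carefully --- is precisely this bookkeeping: verifying that in the normal form $D=\sigma_1 l(s)\sigma_2\inv$ the permutations can be taken in $\Sym_{n+1}$ (so that conjugation keeps the horizontal arcs among nodes $[1,n+1]$), and that consequently every relevant $e$-generator sandwiched against $E_i$ has both endpoints in $[1,n+1]$, so that either Lemma \ref{lem:capzero} or the block-symmetric extension of the Theorem \ref{thm:ann} computation applies. Once that is in place the Corollary is immediate.
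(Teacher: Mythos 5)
Your committed argument is in substance the same as the paper's: exploit the fact that $E_i$ absorbs permutations from $\Sym\{1,\dots,i\}\times\Sym\{i{+}1,\dots,n{+}1\}$ up to sign so as to bring a horizontal arc of $D$ into contact with $E_i$ in the form of some $e_{k,l}$ with $\{k,l\}\subseteq[1,n+1]$, and then kill it using Lemma \ref{lem:capzero} together with Theorem \ref{thm:ann}. The paper does this more directly: it observes that $D\sigma=D'e_j$ for some $\sigma\in\Sym_i\times\Sym_{n+1-i}$ and some $j\in\{1,\dots,n\}$, so $DE_i=\pm D\sigma E_i=\pm D'e_jE_i=0$, with no need for the $\sigma_1 l(s)\sigma_2\inv$ normal form or the intermediate claim that $e_{k,l}E_i=E_ie_{k,l}=0$ for \emph{all} pairs $\{k,l\}\subseteq[1,n+1]$ (though that claim is true and is a perfectly good stepping stone).

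One statement in your proof is inverted and must be corrected before the argument reads correctly. Lemma \ref{lem:capzero} gives $e_{k,l}F_i=F_ie_{k,l}=0$ \emph{precisely when} $s_{kl}$ lies in $\Sym\{1,\dots,i\}\times\Sym\{i{+}1,\dots,n{+}1\}$, i.e.\ when $k$ and $l$ lie in the \emph{same} alternating block; you assert the opposite, namely that these same-block pairs are the exceptional ones. The genuinely exceptional pairs are the straddling ones ($k\leq i<l$), for which $e_{k,l}F_i\neq 0$ in general --- this is exactly why Theorem \ref{thm:ann} requires a nontrivial computation for $e_i$. Fortunately your parenthetical justification (conjugating $e_i$ by the Young subgroup) is precisely the right tool for the straddling pairs, since they are exactly the conjugates $\sigma e_i\sigma\inv$ with $\sigma\in\Sym_i\times\Sym_{n+1-i}$, whence $E_ie_{k,l}=\pm E_ie_i\sigma\inv=0$. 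So the two tools you invoke do between them cover all pairs; only the labels on the two cases need to be swapped.
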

\begin{proof}
Fix $i$ as above. If $\sigma\in\Sym_i\times\Sym_{n+1-i}\subset B_{n+1}(n)$,
then $\sigma E_i=\pm E_i$. Now it is clear that for any diagram $D$ as above,
there is an element $\sigma\in\Sym_i\times\Sym_{n+1-i}$ such that for some
$j\in\{1,2,\dots,n\}$, $D\sigma=D'e_j$ for some diagram $D'\in B_{n+1}(n)$.
Hence $DE_i=\pm D\sigma E_i=\pm D'e_jE_i$, which is zero by Theorem \ref{thm:ann}.
The proof that $E_iD=0$ is similar.
\end{proof}

\begin{corollary}\label{cor:idemp}
The elements $E_i$ are quasi-idempotent. Specifically, we have, for
$i=0,1,\dots,\half$,
$$
E_i^2=i!(n+1-i)!E_i.
$$
\end{corollary}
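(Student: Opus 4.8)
The plan is to reduce the computation of $E_i^2$ to two ingredients already established: the quasi-idempotency of $F_i$ itself, and the one-sided annihilation statement of Corollary~\ref{cor:ann}. First I would record that $F_i^2=i!\,(n+1-i)!\,F_i$. This is immediate from the definition $F_i=a(1,i)\,a(i+1,n+1)$, since the two alternating elements $a(1,i)$ and $a(i+1,n+1)$ have disjoint supports, hence commute, and $a(H)^2=|H|\,a(H)$ for any finite subgroup $H\le\Sym_r$. Right-multiplying the defining formula $E_i=\sum_{j=0}^i(-1)^jc_i(j)\,F_ie_i(j)F_i$ by $F_i$ and using $F_i^2=i!\,(n+1-i)!\,F_i$ then gives at once
$$
E_iF_i=i!\,(n+1-i)!\,E_i,
$$
and symmetrically $F_iE_i=i!\,(n+1-i)!\,E_i$.

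Next I would check that Corollary~\ref{cor:ann} applies to each diagram $e_i(j)$ with $1\le j\le i$. Indeed $e_i(j)=e_{i,i+1}e_{i-1,i+2}\cdots e_{i-j+1,i+j}$ involves only vertices with labels between $i-j+1\ge 1$ and $i+j$; since $i\le\half$ we have $i+j\le 2i\le n+1$, so $e_i(j)$ is a diagram in $B_{n+1}(n)\subseteq B_r(n)$, and for $j\ge 1$ it carries (in fact $j$) horizontal arcs. Hence $E_i\,e_i(j)=0$ for all $j\ge 1$.

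Finally I would expand
$$
E_i^2=\sum_{j=0}^i(-1)^jc_i(j)\,(E_iF_i)\,e_i(j)\,F_i=i!\,(n+1-i)!\sum_{j=0}^i(-1)^jc_i(j)\,E_i\,e_i(j)\,F_i.
$$
All terms with $j\ge 1$ vanish by the previous step, leaving only the $j=0$ term, which equals $i!\,(n+1-i)!\,c_i(0)\,E_iF_i=\bigl(i!\,(n+1-i)!\bigr)^2c_i(0)\,E_i=i!\,(n+1-i)!\,E_i$, since $c_i(0)=\bigl(i!\,(n+1-i)!\bigr)^{-1}$. This is the asserted identity. I expect no real obstacle at this stage: all the combinatorial and geometric difficulty has already been absorbed into Theorem~\ref{thm:ann} and Corollary~\ref{cor:ann}; the only things left to verify here are the elementary identity $F_i^2=i!\,(n+1-i)!\,F_i$ and the containment $e_i(j)\in B_{n+1}(n)$ for $j\le i\le\half$.
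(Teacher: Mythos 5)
Your proposal is correct and follows essentially the same route as the paper: both reduce $E_i^2$ to the $j=0$ term by killing the $j\ge 1$ terms via Corollary~\ref{cor:ann} (the paper applies it to the diagrams of $F_ie_i(j)F_i$, you to $e_i(j)$ after first extracting $E_iF_i=i!\,(n+1-i)!\,E_i$), and then both use the elementary identity $F_i^2=i!\,(n+1-i)!\,F_i$ to finish.
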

\begin{proof}
Recall that $E_i=\sum_{j=0}^i(-1)^jc_i(j)F_ie_i(j)F_i$.
Now it follows from Corollary \ref{cor:ann} that for $j>0$,
$E_iF_ie_i(j)F_i=0$, since the second factor is a sum of diagrams
with at least one horizontal arc. Hence
$E_i^2=E_i\sum_{j=0}^i(-1)^jc_i(j)F_ie_i(j)F_i=E_iF_i=i!(n+1-i)!E_i.$
\end{proof}

\section{Generators of the kernel}

In this section we shall prove

\begin{theorem}\label{thm:kergens}
The ideal $\ker(\nu)$ is generated by $E_0,E_1,\dots,E_\half$.
\end{theorem}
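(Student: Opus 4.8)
The plan is to show the two inclusions separately. The inclusion $\langle E_0,\dots,E_{\half}\rangle\subseteq\ker(\nu)$ is already settled: by Proposition \ref{prop:einker}, each $E_i\in\ker(\nu)$, and $\ker(\nu)$ is a two-sided ideal, so the ideal they generate is contained in it. Thus the work is in the reverse inclusion. By Lemma \ref{lem:ker1}, $\ker(\nu)$ is spanned by the elements $b(S,S',\beta)$ over all admissible triples, so it suffices to prove that every $b(S,S',\beta)$ lies in the ideal $\CI:=\langle E_0,\dots,E_{\half}\rangle$.

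First I would reduce to a normal form. Given any triple $(S,S',\beta)$ with $|S|=|S'|=n+1$, $S\cap S'=\emptyset$, and $\beta$ a pairing of the remaining $2r-2(n+1)$ vertices, the subgroup $\Sym_r\times\Sym_r$ acting by $(\sigma_1,\sigma_2)\cdot D=\sigma_1 D\sigma_2\inv$ permutes the family of all such triples; concretely, left/right multiplication by permutations relabels the top and bottom rows and hence moves $S$, $S'$, $\beta$ around. Using this action one can conjugate $b(S,S',\beta)$ to $\pm\sigma_1 b(S_i,S_i',\beta_i)\sigma_2\inv$ for a suitable $i$, where $(S_i,S_i',\beta_i)$ is the standard triple appearing in the proof of Proposition \ref{prop:einker}; the relevant discrete invariant distinguishing the orbits is essentially the number of pairs of $S$ lying in the same row versus opposite rows (together with how $\beta$ interacts), which matches the parameter $i$ counting horizontal arcs. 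Since $\CI$ is a two-sided ideal, $b(S,S',\beta)\in\CI$ as soon as $b(S_i,S_i',\beta_i)\in\CI$, and by \eqref{pf:3} we have $b(S_i,S_i',\beta_i)=E_i$. Therefore $b(S,S',\beta)\in\CI$ for every triple, giving $\ker(\nu)\subseteq\CI$.

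The bookkeeping obstacle — and the step I expect to be the main one — is verifying carefully that the $\Sym_r\times\Sym_r$-orbits on the set of admissible triples $(S,S',\beta)$ are exhausted by the standard ones $(S_i,S_i',\beta_i)$ for $0\le i\le\half$, and that conjugating $b$ is compatible with this, including the sign. One must check that for a general triple the $n+1$ "vertical" edges $\{(i_k,j_{\pi(k)})\}$ of the diagrams $D_\pi(S,S',\beta)$ can, after relabelling rows by a pair of permutations, be brought into exactly the configuration of the $e_i(j)$'s — i.e. that the only invariant is the number of $S$–$S'$ pairs that end up as horizontal arcs, which is bounded by $\min(|S\cap\text{top}|,|S'\cap\text{top}|)\le\half$ — and that the alternating sum over $\Sym_{n+1}$ transforms correctly, picking up only an overall sign $\ve(\text{relabelling})$. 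The cleftover pairing $\beta$ can always be absorbed into the "identity" part of the diagram because it involves vertices disjoint from $S\cup S'$, and permutations act transitively enough on pairings of a fixed even-size set of through-strings and arcs; this needs a short but careful argument. Once these combinatorial facts are in hand, the theorem follows immediately from Lemma \ref{lem:ker1}.
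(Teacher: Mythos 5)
There is a genuine gap, and it sits exactly where you flagged the ``bookkeeping obstacle'': the claim that every admissible triple $(S,S',\beta)$ can be moved by $\Sym_r\times\Sym_r$ (plus swapping $S\leftrightarrow S'$ and top$\leftrightarrow$bottom) into one of the standard triples $(S_i,S_i',\beta_i)$ is false. Left and right multiplication by permutations only relabel the top row $\{1,\dots,r\}$ and the bottom row $\{r+1,\dots,2r\}$ separately, so the pair $\bigl(|S\cap\{1,\dots,r\}|,\,|S'\cap\{1,\dots,r\}|\bigr)=(i,j)$ is an orbit invariant (up to the two swaps). The standard triples realize only the values with $i+j=n+1$, whereas for $r$ large enough one can take, say, both $S$ and $S'$ inside the top row, giving $i+j=2(n+1)$. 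Such triples have positive \emph{deficiency} $d_{ij}=i+j-(n+1)>0$: every diagram $D_\pi(S,S',\beta)$ then has at least $d_{ij}$ horizontal arcs among its $S$--$S'$ edges, and $b(S,S',\beta)$ is not a $\Sym_r\times\Sym_r$-translate of any $E_i$. So your reduction only covers the deficiency-zero orbits, and the containment $\ker(\nu)\subseteq\langle E_0,\dots,E_\half\rangle$ is not established for the remaining spanning elements.

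This is precisely why the paper introduces the auxiliary elements $E_{ij}$ (Definition \ref{def:eijk}): Proposition \ref{prop:genseij} shows that the $b(S,S',\beta)$ of deficiency $d_{ij}$ reduce, under $\Sym_r\times\Sym_r$, to $E_{ij}D_\alpha$ or $E_{ij}^*D_\alpha$, so that $\ker(\nu)=\langle E_{ij},E_{ij}^*\rangle$. The genuinely nontrivial step --- which your proposal has no analogue of --- is then Corollary \ref{cor:eijinei}: one must show each $E_{ij}$ lies in $\langle E_k\rangle$ for a suitable $k$. This uses the annihilation property $e_\ell E_{ij}=0$ (Theorem \ref{thm:eloopzero}, proved by the arc computations of Corollary \ref{cor:basic}) to reduce products $xE_kyE_{ij}$ to $xF_kyE_{ij}$, and then a symmetric-group argument via the Pieri rule (Lemma \ref{lem:lr}) to produce $x,y\in K\Sym_{i+j}$ with $xF_ky=F_{ij}$. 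Your first paragraph and the handling of the deficiency-zero case are fine, but the theorem cannot be obtained from Lemma \ref{lem:ker1} and the $E_i$ alone by orbit bookkeeping; you need to confront the positive-deficiency elements and show they are absorbed into the ideal $\langle E_0,\dots,E_\half\rangle$.
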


%\section{The case of positive deficiency}
%In this section we shall show that the elements $E_i$ generate $\ker( \nu)$, and
%hence that $E=E_\half$ generates $\ker(\nu)$. This will be done by showing that
%each of the elements $b(S,S',\beta)$ lies in the ideal of $B_r(n)$ generated by
%a certain explicit element $E_{i,j}$, and that each element $E_{i,j}$ lies in the ideal
%genrated by $E_k$ for some $k$.
%\subsection*{Generators for $\ker(\nu)$}

\subsection{The cellular anti-involution on $B_r(\delta)$}

We shall make use of the cellular anti-involution \cite{GL96} on $B_r(\delta)$. This
is the unique algebra anti-involution $^*:B_r(\delta)\to B_r(\delta)$
satisfying $s_i^*=s_i$ and $e_i^*=e_i$ for each $i$. Then for $\sigma\in\Sym_r$,
we have $\sigma^*=\sigma\inv$. Geometrically, $^*$ may be thought of as reflecting
diagrams in a horizontal line.

The proof of Theorem \ref{thm:kergens} will proceed by
showing that each of the elements $b(S,S',\beta)$ lies in the
ideal of $B_r(n)$ generated by
a certain explicit element $E_{ij}$ or $E_{ij}^*$,
and that each of the elements $E_{ij}$ and $E_{ij}^*$ lies in the ideal
generated by $E_k$ for some $k$.

\subsection{Generators and deficiency}

Let $S,S'$ be disjoint subsets of $\{1,\dots,2r\}$ such that $|S|=|S'|=n+1$.
If $|S\cap \{1,\dots,r\}|=i$
and $|S'\cap \{1,\dots,r\}|=j$, then after pre and post multiplying
$b(S,S',\beta)$ by elements of $\Sym_r\subset B_r(n)$
and possibly interchanging $S$ with $S'$ and $\{1,\dots,r\}$ with $\{r+1,\dots,2r\}$,
we may assume that: $i\leq j$ and $i+j\geq n+1$;
the case $i+j=n+1$ leads to the elements $E_i$.
We write $d_{ij}=i+j-(n+1)$ and refer to this as the {\it deficiency} of the pair $S,S'$.
Fix $i,j$ as above.

For $k=0,1,\dots,n+1-j$, let $D_{ij}(k)$ be the diagram depicted in Figure \ref{fig10},
in which the points of $S$ are denoted
by $\circ$, those of $S'$ by $*$ and those of $\{1,\dots,2r\}\setminus(S\amalg S')$
by $\bullet$.

\begin{figure}
\begin{center}
\begin{tikzpicture}[scale=0.78]

%% draw filled dots at required points at bottom
\foreach \x in {1,3,4,6}
\draw node at (\x,0){*};
\foreach \x in {7,9,10,12}
\draw(\x,0) circle (0.1cm);
\foreach \x in {13,15,16,18}
\filldraw(\x,0) circle (0.1cm);
%% draw filled dots at required points on top
\foreach \x in {1,3,4,7,9}
\draw(\x,3) circle (0.1cm);
\foreach \x in {10,12,15,16,18}
\draw node at (\x,3){*};
%\foreach \x in {11,14}
%\filldraw(\x,2) circle (0.1cm);

%% draw straight lines where required
\foreach \x in {1,3}
\draw (\x,3)--(\x,0);
\draw (16,3)--(10,0);
\draw (18,3)--(12,0);

%% label the points in top row
\draw node[above] at (1,3){1};\draw node[above] at (3,3){\tiny{$i-d_{ij}-k$}};
\draw node[above] at (7,3){\tiny{$n+2-j$}};
\draw node[above] at (9,3){\tiny{$i$}};\draw node[above] at (10,3){\tiny{$i+1$}};
\draw node[above] at (12,3){\tiny{$i+d_{ij}$}};
\draw node[above] at (15,3){\tiny{$i+d_{ij}+k$}};
\draw node[above] at (18,3){\tiny{$i+j$}};

%% label the points in bottom row
\draw node[below] at (1,0){\tiny{$r+1$}};
\draw node[below] at (3,0){\tiny{$\overset{r+i}{-d_{ij}-k}$}};
\draw node[below] at (7,0){\tiny{$\overset{r+n+2}{-j}$}};
\draw node[below] at (9,0){\tiny{$\overset{r+n}{+1-j+k}$}};
\draw node[below] at (12,0){\tiny{$\overset{r+n+1}{-d_{ij}}$}};
%\draw node[below] at (13,0){\tiny{$\overset{r+n+2}{-d_{ij}}$}};
\draw node[below] at (15,0){\tiny{$r+n+1$}};\draw node[below] at (18,0){\tiny{$r+i+j$}};

%% draw curved lines where required, top
\draw(4,3) .. controls (7.5,1.5) and (11.5,1.5) .. (15,3);
\draw(7,3) .. controls (8.5,2.1) and (10.5,2.1) .. (12,3);
\draw(9,3) .. controls (9.2,2.5) and (9.8,2.5) .. (10,3);

%% draw curved lines where required, bottom
\draw(4,0) .. controls (5,1.4) and (8,1.4) .. (9,0);
\draw(6,0) .. controls (6.2,0.5) and (6.8,0.5) .. (7,0);
%\draw(6,0) .. controls (7,1.5) and (10,1.5) .. (11,0);
\draw(13,0) .. controls (14,1.4) and (17,1.4) .. (18,0);
\draw(15,0) .. controls (15.2,0.5) and (15.8,0.5) .. (16,0);

%% draw ... between straight lines etc
\draw(2,1.5) node {\large{$\cdots$}};\draw(5,0) node {{$\cdots$}};
\draw(2,0) node {{$\cdots$}};\draw(2,3) node {{$\cdots$}};
\draw(8,0) node {{$\cdots$}};\draw(17,3) node {{$\cdots$}};\draw(11,0) node {{$\cdots$}};
\draw(5.5,3) node {{$\cdots$}};\draw(8,3) node {{$\cdots$}};\draw(11,3) node {{$\cdots$}};
\draw(14,1.5) node {\large{$\cdots$}};\draw(14,0) node {{$\cdots$}};\draw(17,0) node {{$\cdots$}};
\draw(13.5,3) node {{$\cdots$}};
%\draw(12.5,1.5) node {\large{$\cdots$}};
\end{tikzpicture}
%\centerline{Figure 6}
\end{center}
\caption{ }
\label{fig10}
\end{figure}

The diagram $D_{ij}(k)$ is regarded as an element of $B_r(n)$ through
the natural inclusion $B_l(n)\hookrightarrow B_r(n)$ for any $l\leq r$.
Note that in the deficiency zero case, where $i+j=n+1$, the diagram $D_{ij}(k)$
coincides with the diagram $e_i(k)$ of definition \ref{def:br-elts}(ii).

\begin{definition}\label{def:eijk}
For $i,j$ such that $0\leq i\leq j\leq n+1$ and $i+j\geq n+1$, define
$E_{ij}\in B_r(n)$ by
\be\label{eq:eij}
E_{ij}=\sum_{k=0}^{n+1-j}(-1)^jc_{ij}(k)a(1,i)a(i+1,i+j)D_{ij}(k)
a(1,n+1-j)a(n+2-j,n+1-d_{ij}),
\ee
where $c_{ij}(k)=\left( (n+1-j-k)!(n+1-i-k)!k!(d_{ij}+k)!\right)\inv$.
\end{definition}

\begin{proposition}\label{prop:genseij}
(i) The elements $E_{ij}$ and $E_{ij}^*$ are in $\ker(\nu)$.

(ii) The kernel $\ker(\nu)$  is generated as an ideal of $B_r(n)$ by the elements $E_{ij}$
and $E_{ij}^*$, where $0\leq i\leq j\leq n+1$, $i+j\geq n+1$.
Here $^*$ denotes the cellular involution of $B_r(n)$, discussed above.
\end{proposition}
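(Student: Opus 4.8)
The plan is to mirror the two-part structure that has already worked for the deficiency-zero elements $E_i$, extending each step to arbitrary deficiency $d_{ij}$. For part (i), I would show that, just as $E_i = b(S_i,S_i',\beta_i)$ was established in Proposition \ref{prop:einker}, the element $E_{ij}$ equals $b(S,S',\beta)$ for the specific configuration $(S,S',\beta)$ read off from Figure \ref{fig10}: namely $S\cap[1,r]$ is the set of $\circ$-points in the top row, $S'\cap[1,r]$ the $*$-points, with the remaining vertices paired off vertically by $\beta$. One takes $|S\cap[1,r]|=i$ and $|S'\cap[1,r]|=j$, so that $|S\cap[r+1,2r]|=n+1-i$ and $|S'\cap[r+1,2r]|=n+1-j$; a count confirms $|S|=|S'|=n+1$. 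Then $b(S,S',\beta)$ is the alternating sum over $\Sym_{n+1}$ of the diagrams joining each point of $S$ to a point of $S'$, and one groups this sum by the orbits of $H\times H$ where $H=\Sym\{1,\dots,i\}\times\Sym\{i+1,\dots,i+j\}$ acting on the top row (and correspondingly on the bottom). The $D_{ij}(k)$, $k=0,\dots,n+1-j$, are orbit representatives — $k$ records the number of ``short'' top arcs among the $S$–$S'$ edges — and the stabilizer of $D_{ij}(k)$ has order $(n+1-j-k)!(n+1-i-k)!k!(d_{ij}+k)!$, which is exactly $c_{ij}(k)\inv$; the sign alternates with $k$ because $D_{ij}(k+1)$ differs from $D_{ij}(k)$ by a single transposition inside the alternating subgroup. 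This yields $E_{ij}=b(S,S',\beta)\in\ker(\tau)=\ker(\nu)$ by Lemma \ref{lem:gammainker} and Corollary \ref{cor:fftbrauer}, and applying $^*$ gives $E_{ij}^*\in\ker(\nu)$ since $^*$ preserves $\ker(\nu)$ (it corresponds under $\nu$ to the transpose map on $\End(V^{\ot r})$, which preserves the $\Or(V)$-equivariant endomorphisms).

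For part (ii), by Lemma \ref{lem:ker1} it suffices to show that every $b(S,S',\beta)$ lies in the ideal generated by the $E_{ij}$ and $E_{ij}^*$. Given an arbitrary triple $(S,S',\beta)$, set $i=|S\cap[1,r]|$, $j=|S'\cap[1,r]|$; after pre- and post-multiplying by suitable permutations in $\Sym_r\subset B_r(n)$ (which only permutes the vertices within each row, hence keeps us inside the ideal and does not change the combinatorial type of the pairing up to relabelling), and after possibly swapping $S\leftrightarrow S'$ or the two rows — the latter swap being implemented by $^*$ — we may assume $i\le j$ and $i+j\ge n+1$, as in the discussion preceding Definition \ref{def:eijk}. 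Now $b(S,S',\beta)$ and $E_{ij}$ are built from the same alternating sum over $\Sym_{n+1}$ of the same family of ``local'' arc patterns $D_{ij}(k)$; they differ only in the permutations (from $\Sym_r\times\Sym_r$) applied on the two sides and in the passive pairing $\beta$ versus $\beta_{ij}$. Since $\beta$ and $\beta_{ij}$ are both perfect matchings of the complementary vertex set with the same number of through-strings, one is carried to the other by an element of $\Sym_r\times\Sym_r$; it follows that $b(S,S',\beta)=\sigma_1 E_{ij}\sigma_2\inv$ for some $\sigma_1,\sigma_2\in\Sym_r$, so $b(S,S',\beta)\in\langle E_{ij}\rangle$. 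Combined with part (i) and Lemma \ref{lem:ker1}, this proves that the $E_{ij}$ and $E_{ij}^*$ generate $\ker(\nu)$.

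I expect the main obstacle to be the bookkeeping in the orbit/stabilizer count in part (i) — specifically, verifying that the stabilizer of $D_{ij}(k)$ under $H\times H$ has exactly the claimed order $(n+1-j-k)!(n+1-i-k)!k!(d_{ij}+k)!$, and that the parameter $k$ (the number of short top arcs) genuinely indexes the orbits. One must check that, once $k$ is fixed, every $S$–$S'$ matching in Figure \ref{fig10} with $k$ short top arcs can be moved to $D_{ij}(k)$ by $H\times H$: the $d_{ij}+k$ through-strings from $S$-points in the top to $S'$-points in the bottom, the $k$ short arcs, the $n+1-i-k$ arcs of the other type on the bottom, and the $n+1-j-k$ remaining top arcs partition the $S$–$S'$ edges, and each block is acted on simply transitively (up to the internal symmetry counted by the factorial) by the appropriate factors of $H$ on the two sides. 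Once this combinatorial identification is pinned down, the sign computation and the matching of $c_{ij}(k)$ with the reciprocal stabilizer order are routine, and part (ii) is then essentially formal given the $\Sym_r\times\Sym_r$-action already exploited in Proposition \ref{prop:commute}.
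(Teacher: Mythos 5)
Part (i) of your proposal is essentially the paper's argument (and the same orbit--stabilizer computation used for Proposition \ref{prop:einker}): one checks that the constants $c_{ij}(k)$ are exactly the reciprocal stabilizer orders so that $E_{ij}$ is literally $b(S_{ij},S'_{ij},\beta_{ij})$, hence lies in $\ker(\tau)=\ker(\nu)$, and $^*$ preserves the kernel. That part is fine.

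Part (ii) contains a genuine gap. You claim that, once the sets $S,S'$ are put in standard position, ``$\beta$ and $\beta_{ij}$ are both perfect matchings of the complementary vertex set with the same number of through-strings, so one is carried to the other by an element of $\Sym_r\times\Sym_r$,'' and conclude $b(S,S',\beta)=\sigma_1E_{ij}\sigma_2\inv$. This is false in general: $\beta$ is an \emph{arbitrary} pairing of the $2r-2(n+1)$ complementary vertices, and its only constraint is that the number of bottom-row arcs exceeds the number of top-row arcs by exactly $d_{ij}$ (since the bottom complement has $2d_{ij}$ more vertices than the top complement). The standard pairing $\beta_{ij}$ realizes the minimal configuration ($0$ top arcs, $d_{ij}$ bottom arcs, all else vertical), but for $r$ large enough $\beta$ may have $t>0$ top-row arcs and $t+d_{ij}$ bottom-row arcs; since $\Sym_r\times\Sym_r$ preserves the number of through-strings (indeed the number of horizontal arcs in each row separately), no such $\beta$ lies in the $\Sym_r\times\Sym_r$-orbit of $\beta_{ij}$. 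Note this already bites in the deficiency-zero case, where $\beta$ need not be all-vertical. Consequently $b(S,S',\beta)$ need not equal $\sigma_1E_{ij}\sigma_2\inv$; it only lies in the two-sided \emph{ideal} generated by $E_{ij}$. The paper's proof repairs exactly this point: after normalizing by $\pi,\pi'\in\Sym_r$ so that the active part occupies the leftmost $i+j$ strands and the (fixed) passive part the remaining strands, one writes $\pi b(S,S',\beta)\pi'=E_{ij}D_\alpha$ (or $E_{ij}^*D_\alpha$) for a suitable Brauer \emph{diagram} $D_\alpha\in B_r(n)$, whose horizontal arcs manufacture the extra top- and bottom-row arcs of $\beta$ upon concatenation with the identity strands of $E_{ij}$. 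With that correction the rest of your argument goes through; since the conclusion you need is only ideal membership, the weaker factorization suffices.
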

\begin{proof} The kernel $\ker(\nu)$ is spanned by the elements $b(S,S',\beta)$,
each of which is an alternating sum over $\Sym_{n+1}$. To see that $E_{ij}\in\ker(\nu)$,
we shall show that $E_{ij}$ is precisely one of the elements $b(S,S',\beta)$,
where $S=S_{ij}:=\{1,\dots,i,r+i+1-d_{ij},\dots,r+n+1-d_{ij}\}$,
$S'=S'_{ij}:=\{i+1,\dots,i+j,r+1,\dots,r+i-d_{ij}\}$,
and $\beta$ is the pairing of $\{1,\dots,2r\}\setminus(S\amalg S')$ depicted in
the diagram $D_{ij}(k)$ for any $k$.

Observe that from the formula \eqref{eq:eij}, $E_{ij}$ is alternating with respect to
both $\Sym(S)$ and $\Sym(S')$, for if $t$ is any transposition in $\Sym(S)$,
then $t\cdot E_{ij}=-E_{ij}$, and similarly for $S'$. In fact, the constants
$c_{ij}(k)$ are chosen so that $E_{ij}$ is precisely the alternating sum of
$(n+1)!$ diagrams, obtained from $D_{ij}(0)$ by permuting the elements of $S$.
This shows that $E_{ij}\in\ker(\nu)$. Since $\ker(\nu)$ is evidently invariant
under $^*$, this proves (i).

It is straightforward to see that using the action of $\Sym_r$ on the right and left, any
pair $S,S'$ of subsets as above may be transformed into a pair
$S_{ij},S_{ij}'$ or $S_{ij}',S_{ij}$, where these sets are as above, with $i\leq j$
and $i+j\geq n+1$.

It follows, since any summand of an element $b(S,S',\beta)$, where the
pair $S,S'$ has deficiency $d$ has at least $d$ horizontal edges, that any
element $b(S,S',\beta)$ may be transformed by $\Sym_r\times\Sym_r$
into an element of $B_r(n)$ each of whose diagram summands
satisfies the condition that its leftmost $i+j$ part coincides with that
of $E_{ij}$ or $E_{ij}^*$ for some $i,j$, and whose
rightmost $r-(i+j)$ part is constant for each such summand.
Hence $\pi b(S,S',\beta)\pi'=E_{ij}D_\alpha$
or $E_{ij}^*D_\alpha$ for some $\pi,\pi'\in\Sym_r$ and $D_\alpha\in B_r(n)$. This proves (ii).
\end{proof}

\begin{lemma}\label{lem:pi}
We have, for each $i,j$ and $k$ as above,
$$
D_{ij}(k)=e_i(k+d_{ij})\pi_{ij}=e_{i,i+1}e_{i-1,i+2}\dots e_{i-k-d_{ij}+1,i+k+d_{ij}}\pi_{ij},
$$
and
$$
E_{ij}=\sum_{k=0}^{n+1-j}(-1)^jc_{ij}(k)a(1,i)a(i+1,i+j)e_i(k+d_{ij})a(1,n+1-j)a(2i+j-n,i+j)\pi_{ij},
$$
where $\pi_{ij}\in\Sym_r\subset B_r(n)$ is the permutation defined by
$$
\pi_{ij}(l)=
\begin{cases}
l\text{ if $1\leq l\leq n+1-j$ or $l>i+j$}\\
l+n+1-i\text{ if $i-d_{ij}+1\leq l\leq i+d_{ij}$}\\
l-2d_{ij} \text{ if $i+d_{ij}+1\leq l\leq i+j$}.\\
\end{cases}
$$
The permutation $\pi_{ij}$ is independent of $k$.
\end{lemma}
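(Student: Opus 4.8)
The plan is to read off the factorisation $D_{ij}(k)=e_i(k+d_{ij})\pi_{ij}$ directly from the picture in Figure \ref{fig10}, and then deduce the displayed formula for $E_{ij}$ by substituting this into \eqref{eq:eij} and simplifying the right-hand $a$-factors. First I would verify the permutation formula. Looking at $D_{ij}(k)$, the top row splits into three blocks: the leftmost $n+1-j$ nodes together with the nodes past position $i+j$ are joined straight down (this is the $l\mapsto l$ clause); the middle block of nodes $i-d_{ij}+1,\dots,i+d_{ij}$ are the through strings attached to the $*$-nodes $r+n+2-i,\dots$ on the bottom right, which is exactly a shift by $n+1-i$; and the block $i+d_{ij}+1,\dots,i+j$ consists of the remaining $j-2d_{ij}=n+1-i-d_{ij}$ through strings, which land on the bottom nodes shifted by $-2d_{ij}$. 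One checks $\pi_{ij}$ is a genuine permutation of $[1,r]$ by confirming the three target blocks are disjoint and exhaust $[1,r]$: the first block contributes $[1,n+1-j]\cup[i+j+1,r]$, the second contributes $[n+2-i,n+1]$, the third contributes $[n+2-j,\,n+1-i]$ — here I would use $i\le j$, $i+j\ge n+1$ and $0\le d_{ij}\le i$ to see these intervals tile $[1,r]$ correctly. Since $\pi_{ij}$ only depends on $i,j$ (not $k$), the last sentence is immediate.

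The factorisation $D_{ij}(k)=e_i(k+d_{ij})\pi_{ij}$ then amounts to the observation that right-multiplying $e_i(k+d_{ij})$ (a diagram with $k+d_{ij}$ top arcs nested around positions $i,i+1$, $k+d_{ij}$ symmetric bottom arcs, and all other nodes straight through) by the permutation diagram $\pi_{ij}$ reshuffles the bottom endpoints of those through strings precisely into the configuration drawn in Figure \ref{fig10}; the horizontal arcs, being among the top nodes, are untouched, and no free loops are created, so there is no power of $\delta$. Concretely, $e_i(k+d_{ij})$ has top arcs on $\{i-k-d_{ij}+1,\dots,i+k+d_{ij}\}$ and through strings on the complementary top nodes, and $\pi_{ij}$ carries each such through-string bottom endpoint to where $D_{ij}(k)$ puts it. This is a routine diagram chase; I would present it by matching the two diagrams node by node on each of the three blocks.

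For the formula for $E_{ij}$, I would substitute $D_{ij}(k)=e_i(k+d_{ij})\pi_{ij}$ into \eqref{eq:eij} and pull $\pi_{ij}$ out to the right past the final factor $a(1,n+1-j)a(n+2-j,n+1-d_{ij})$. Here the key point is that this pair of $a$-factors is supported on the bottom nodes $r+1,\dots,r+n+1-d_{ij}$, but after the substitution these $a$-factors act on the \emph{bottom} of $D_{ij}(k)$; rewriting them as factors acting to the right of $e_i(k+d_{ij})\pi_{ij}$ is what produces the factors $a(1,n+1-j)$ and $a(2i+j-n,i+j)$ in the stated formula, since $\pi_{ij}$ conjugates $\Sym[n+2-j,n+1-d_{ij}]$ (bottom indexing) to $\Sym[2i+j-n,i+j]$ (note $2i+j-n = i+d_{ij}+1$ and $i+j$ via $d_{ij}=i+j-n-1$). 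I would check the index arithmetic explicitly: the block $[n+2-j,\,n+1-d_{ij}]$ has the same length $j-d_{ij}+1=n+2-i$ as $[2i+j-n,\,i+j]$, consistent with $\pi_{ij}$ matching them up.

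The main obstacle I anticipate is purely bookkeeping: keeping the three parallel index conventions straight — top-node labels, bottom-node labels, and the shifted labels in $D_{ij}(k)$ as drawn in Figure \ref{fig10}, where several node labels are themselves displayed as two-line fractions like $\overset{r+i}{-d_{ij}-k}$. Verifying that $\pi_{ij}$ as defined by the three-case formula genuinely realises the permutation of through-string endpoints visible in the figure, and that the final $a$-factors transform as claimed under right-conjugation by $\pi_{ij}$, requires care but no new ideas; everything reduces to the identities $d_{ij}=i+j-n-1$, $2i+j-n=i+d_{ij}+1$, and the length count $n+2-i$ for the conjugated symmetric-group block.
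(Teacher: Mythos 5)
Your strategy coincides with the paper's (very terse) proof: verify the factorisation $D_{ij}(k)=e_i(k+d_{ij})\pi_{ij}$ by inspecting Figure \ref{fig10}, then obtain the second display by conjugating the right-hand alternating factors through $\pi_{ij}$. However, the index bookkeeping that you rightly identify as the main hazard has already gone wrong in your sketch, in ways that make the verification fail as written. The image of the middle block $[i-d_{ij}+1,\,i+d_{ij}]$ under $l\mapsto l+n+1-i$ is $[n+2-d_{ij},\,n+1+d_{ij}]=[n+2-d_{ij},\,i+j]$, not $[n+2-i,\,n+1]$; and the image of $[i+d_{ij}+1,\,i+j]$ under $l\mapsto l-2d_{ij}$ is $[i-d_{ij}+1,\,i+j-2d_{ij}]=[n+2-j,\,n+1-d_{ij}]$, not $[n+2-j,\,n+1-i]$. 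With your values the union of the three target sets is $[1,n+1]\cup[i+j+1,r]$, which omits $[n+2,\,i+j]$ whenever $d_{ij}>0$, so the claimed tiling of $[1,r]$ does not hold; with the corrected images it does, because $n+1-j=i-d_{ij}$ and $n+1+d_{ij}=i+j$. Likewise the cardinality of $[n+2-j,\,n+1-d_{ij}]$ is $j-d_{ij}=n+1-i$, not $n+2-i$ (though the two blocks being compared do have equal length, so this slip is harmless).

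Two further points need care before the argument closes. First, what one actually reads off from the definition of $\pi_{ij}$ is that its third clause carries $[2i+j-n,\,i+j]=[i+d_{ij}+1,\,i+j]$ onto $[n+2-j,\,n+1-d_{ij}]$, so the conjugation identity available to you is $\pi_{ij}\,a(2i+j-n,i+j)\,\pi_{ij}^{-1}=a(n+2-j,\,n+1-d_{ij})$ --- precisely the computation the paper's proof prescribes. You must check that the side on which you slide $\pi_{ij}$ past the $a$-factors is the one this identity supports and not its reverse; your erroneous interval computations mean this direction has not actually been verified. Second, right-multiplication by $\pi_{ij}$ reroutes the \emph{bottom} horizontal arcs of $e_i(k+d_{ij})$ as well as the through strings, so the node-by-node comparison with Figure \ref{fig10} must also confirm that the bottom arcs land where the figure puts them; they are not ``untouched''. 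None of this calls for a new idea, but the index computations need to be redone.
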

\begin{proof}
The first statement may be directly verified, and the second follows easily, by
computing $\pi_{ij}a(1,n+1-j)a(2i+j-n,i+j)\pi_{ij}\inv$.
\end{proof}

The next result is required for the proof of Theorem \ref{thm:kergens}.

\begin{theorem}\label{thm:eloopzero}
We have, in the above notation, $e_\ell E_{ij}=0$ for
all $\ell$ such that $1\leq\ell\leq i+j-1$.
\end{theorem}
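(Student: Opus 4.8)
The plan is to reduce the statement to the already-established annihilation Theorem~\ref{thm:ann} by exploiting the factorization of $E_{ij}$ given in Lemma~\ref{lem:pi}. There are two regimes for the index $\ell$, and I would treat them separately. First, for those $\ell$ with $1\leq\ell\leq i+j-1$ for which the corresponding transposition $s_\ell$ lies in $\Sym(S_{ij})$ or $\Sym(S'_{ij})$ --- equivalently, in $\Sym\{1,\dots,i\}$, $\Sym\{i+1,\dots,i+j\}$, or (after accounting for $\pi_{ij}$) in the relevant alternating factors of $E_{ij}$ --- one uses Lemma~\ref{lem:capzero}: since $E_{ij}$ is alternating with respect to these transpositions (as noted in the proof of Proposition~\ref{prop:genseij}), $e_\ell$ kills it directly, because $e_\ell a(H)=0$ whenever $s_\ell\in H$.

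The remaining, genuinely new cases are those $\ell$ corresponding to the ``middle'' positions, where $e_\ell$ does not obviously meet an alternating factor. Here I would use the explicit formula for $E_{ij}$ from Lemma~\ref{lem:pi}, namely
\[
E_{ij}=\sum_{k=0}^{n+1-j}(-1)^j c_{ij}(k)\,a(1,i)a(i+1,i+j)\,e_i(k+d_{ij})\,a(1,n+1-j)a(2i+j-n,i+j)\,\pi_{ij},
\]
and compare its left-hand part with $E_i\in B_{i+j}(n)$ --- up to the substitution $s=i+j$ in place of $s=n+1$, the elements $a(1,i)a(i+1,i+j)e_i(k+d_{ij})a(1,\dots)a(\dots,i+j)$ are exactly the building blocks of an element of the same shape as $E_{(i+j-n-1+\text{something})}$ living in the smaller Brauer algebra $B_{i+j}(n)$. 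More precisely, I expect that the leftmost $i+j$ strands of $E_{ij}$ assemble, after the relabelling dictated by $\pi_{ij}$, into (a scalar multiple of) an element $\w E$ of $B_{i+j}(n)$ to which Theorem~\ref{thm:ann} applies: the hypothesis ``$r\geq n+1$'' there becomes ``$i+j\geq n+1$'' here, which is exactly our standing assumption on the deficiency. Since $\pi_{ij}$ is a permutation that fixes the initial segment $[1,n+1-j]$ and the final segment $[i+j+1,r]$ pointwise, conjugating $e_\ell$ by $\pi_{ij}\inv$ returns some $e_{\ell'}$ with $1\leq\ell'\leq i+j-1$, so that $e_\ell E_{ij}=e_\ell (\w E\,\pi_{ij})=(e_\ell\w E)\pi_{ij}$, and $e_\ell\w E=0$ by the annihilation theorem.

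The step I expect to be the main obstacle is making the identification ``$E_{ij}$ restricted to its first $i+j$ strands $=$ a known quasi-idempotent of $B_{i+j}(n)$'' completely precise, including tracking the normalizing constants. The constants $c_{ij}(k)=\big((n+1-j-k)!(n+1-i-k)!k!(d_{ij}+k)!\big)\inv$ must be shown to coincide, up to an overall scalar, with the constants $c_m(\ell)$ of Definition~\ref{def:br-elts}(iii) under the dictionary $m=i$ (or the appropriate index), $\ell=k+d_{ij}$, $s=i+j$ --- this is a factorial bookkeeping exercise analogous to the one carried out at the end of the proof of Theorem~\ref{thm:ann}, and is where sign errors and off-by-one shifts in the alternating-group orbit counts are most likely to creep in. Once that combinatorial identity is verified, the annihilation $e_\ell\w E=0$ is a verbatim application of Theorem~\ref{thm:ann} (or of its proof, run with the parameter $s=i+j$ in Corollary~\ref{cor:basic} rather than $s=n+1$), and the conjugation argument with $\pi_{ij}$ completes the proof.
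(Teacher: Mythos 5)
Your treatment of $\ell\neq i$ is exactly the paper's: for such $\ell$ the transposition $s_\ell$ lies in $\Sym\{1,\dots,i\}$ or $\Sym\{i+1,\dots,i+j\}$, so Lemma~\ref{lem:capzero} applied to the left-hand factor $a(1,i)a(i+1,i+j)$ of $E_{ij}$ gives $e_\ell E_{ij}=0$ at once. (Incidentally, since $\pi_{ij}$ sits on the \emph{right} of $E_{ij}$ and $e_\ell$ multiplies on the left, no conjugation by $\pi_{ij}$ is needed anywhere.) The gap is in the one remaining case $\ell=i$. Your primary route --- identify the leftmost $i+j$ strands of $E_{ij}$ with a quasi-idempotent of $B_{i+j}(n)$ ``of the same shape as'' the $E_m$, then apply Theorem~\ref{thm:ann} verbatim --- fails whenever the deficiency $d_{ij}=i+j-(n+1)$ is positive, for two reasons. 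First, the element is not of the form covered by Theorem~\ref{thm:ann}: in the rewriting $E_{ij}=\sum_{k}(-1)^jc_{ij}(k)F_{ij}(0)e_i(k+d_{ij})F_{ij}(d_{ij})\pi_{ij}$ of \eqref{eq:neweij}, the arc count runs from $d_{ij}$ to $i$ (the arc-free term is missing), and the two alternating factors are different ($F_{ij}(0)$ on the left, $F_{ij}(d_{ij})$ on the right), so the element is not even $*$-symmetric as the $E_m$ are. Second, the cancellation mechanism is genuinely different: Theorem~\ref{thm:ann} lives at the parameter value $s=n+1=\delta+1$, where Corollary~\ref{cor:basic} yields $A_k=k^2$, whereas here $s=i+j$, so $\delta-s+2=1-d_{ij}$ and
\begin{equation*}
A_{k+d_{ij}}=(k+d_{ij})(n-(i+j)+2)+(k+d_{ij})(k+d_{ij}-1)=k(k+d_{ij}).
\end{equation*}
The identity that must actually be verified is
\begin{equation*}
A_{k+1+d_{ij}}\,c_{ij}(k+1)=(i-k-d_{ij})!\,(j-k-d_{ij})!\,B_{k+d_{ij}}\,c_{ij}(k),
\end{equation*}
and this is a new computation with the constants $c_{ij}(k)$, not a specialisation of the one done for the $E_m$.

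Your parenthetical fallback --- rerun the argument via Corollary~\ref{cor:basic} with $\delta=n$ and $s=i+j$ --- is precisely what the paper does, and you correctly flagged the factorial bookkeeping as the crux. But as written the proposal asserts rather than establishes that cancellation, leaves the target quasi-idempotent unspecified (``$E_{(i+j-n-1+\text{something})}$''), and the claim that Theorem~\ref{thm:ann} applies ``verbatim'' is false. To close the gap you must carry out the displayed coefficient identity directly, as in the paper's proof.
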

\begin{proof}
It is clear by Lemma \ref{lem:capzero} that the Theorem holds for $\ell\neq i$.
It therefore suffices to prove that
\be\label{eq:eieijzero}
e_iE_{ij}=0.
\ee
To apply the computations of \S \ref{ss:basic},
it is convenient to rewrite the $E_{ij}$ as follows.
For $d$ in the range $0\leq d\leq i$,
write $F_{ij}(d)=a(1,i-d)a(i+1+d,i+j)$.
Noting that $n+1-j=i+d_{ij}$, etc., we may rewrite the expression for $E_{ij}$
in Lemma \ref{lem:pi} as
\be\label{eq:neweij}
E_{ij}=\sum_{k=0}^{n+1-j}(-1)^jc_{ij}(k)F_{ij}(0)e_i(k+d_{ij})F_{ij}(d_{ij})\pi_{ij}.
\ee

Note that the elements $F_{ij}(d)$ are special cases of the elements $J_d$
of Corollary \ref{cor:basic}, which may now be applied directly, replacing
$\delta, s$ and $k$ respectively by $n, i+j$ and $k+d_{ij}=i+j+k-(n+1)$.

We obtain

$$
\begin{aligned}
e_iF_{ij}(0)&e_i(k+d_{ij})F_{ij}(d_{ij})=A_{k+d_{ij}}F_{ij}(1)e_i(k+d_{ij})F_{ij}(d_{ij})\\
+&B_{k+d_{ij}}F_{ij}(1)e_i(k+1+d_{ij})F_{ij}(k+d_{ij})F_{ij}(d_{ij})\\
=&A_{k+d_{ij}}F_{ij}(1)e_i(k+d_{ij})F_{ij}(d_{ij})\\
+&(i-k-d_{ij})!(j-k-d_{ij})!B_{k+d_{ij}}F_{ij}(1)e_i(k+1+d_{ij})F_{ij}(d_{ij}).\\
\end{aligned}
$$

It follows that in the expression for $e_iE_{ij}$ as a sum of the elements
$G_k:=F_{ij}(1)e_i(k)F_{ij}(d_{ij})\pi_{ij}$, the coefficient of $G_{k+1}$
is

$$
(-1)^{k+1}\left(A_{k+1+d_{ij}}c_{ij}(k+1)-(i-k-d_{ij})!(j-k-d_{ij})!
B_{k+d_{ij}}c_{ij}(k)\right).
$$
To evaluate this we substitute the actual values of $A_\ell$ and $B_\ell$.
We have
$$
\begin{aligned}
A_{k+d_{ij}}=&(k+d_{ij})(n-(i+j)+2)+(k+d_{ij})(k+d_{ij}-1)\\
=&k(k+d_{ij}),\\
\end{aligned}
$$
while
$$
B_{k+d_{ij}}=\left[(i-1-(k+d_{ij}))!(j-1-(k+d_{ij}))!\right]\inv.
$$

Moreover
$$
c_{ij}(k)=\left[(d_{ij}+k)!k!(n+1-j-k)!n+1-i-k)!\right]\inv.
$$

Substituting these values into the expression above, we obtain
$$
\begin{aligned}
A_{k+1+d_{ij}}c_{ij}(k+1)=&\frac{1}{(d_{ij}+k)!k!(n-j-k)!(n-i-k)!}\\
=&(i-k-d_{ij})!(j-k-d_{ij})!B_{k+d_{ij}}c_{ij}(k).\\
\end{aligned}
$$
It follows that the coefficient of
$G_k$ in $e_iE_{ij}$ is zero, for $k=0,1,\dots,i-d_{ij}$.
Hence $e_iE_{ij}=0$, and the Theorem is proved.
\end{proof}

\begin{corollary}\label{cor:deijzero}
If $D$ is any diagram in $B_{i+j}(n)$ with at least one horizontal edge,
then $DE_{ij}=0$.
\end{corollary}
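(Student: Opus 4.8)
The plan is to mimic the proof of Corollary \ref{cor:ann}. First I would set $A_1=\{1,\dots,i\}$, $A_2=\{i+1,\dots,i+j\}$ and $H=\Sym(A_1)\times\Sym(A_2)\subseteq\Sym_{i+j}\subseteq B_r(n)$, and record two things. On one hand, reading off the defining formula \eqref{eq:eij} we see that $E_{ij}=a(1,i)\,a(i+1,i+j)\cdot X$ for a suitable $X\in B_r(n)$; since $\rho\,a(1,i)\,a(i+1,i+j)=\ve(\rho)\,a(1,i)\,a(i+1,i+j)$ for every $\rho\in H$, this gives $\rho E_{ij}=\ve(\rho)E_{ij}$, and hence $DE_{ij}=\ve(\rho)(D\rho)E_{ij}$, for all $\rho\in H$. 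On the other hand, the whole computation of $DE_{ij}$ takes place inside $B_{i+j}(n)$, because $D\in B_{i+j}(n)$ and every factor occurring in $E_{ij}$, including the permutation $\pi_{ij}$ of Lemma \ref{lem:pi}, moves only the vertices $1,\dots,i+j$.

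Next, given a diagram $D\in B_{i+j}(n)$ with at least one horizontal edge, I would use the elementary fact that a Brauer diagram has as many horizontal arcs in its top row as in its bottom row to produce a bottom arc of $D$, joining two bottom vertices $p<q$. I would then choose $\rho\in H$ so that $D\rho$ has a bottom arc joining two \emph{adjacent} vertices $\ell,\ell+1$: the $H$-orbit of the pair $\{p,q\}$ contains $\{1,2\}$ if $p,q\in A_1$ (so $i\ge 2$), contains $\{i+1,i+2\}$ if $p,q\in A_2$ (so $j\ge 2$), and contains $\{i,i+1\}$ if the pair straddles the two blocks; in every case the adjacent pair obtained has $1\le\ell\le i+j-1$. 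Such a $D\rho$ is of the form $D'e_\ell$ for some diagram $D'\in B_{i+j}(n)$ and this $\ell$, so that
\[
DE_{ij}=\ve(\rho)\,(D\rho)\,E_{ij}=\ve(\rho)\,D'\,e_\ell E_{ij}=0
\]
by Theorem \ref{thm:eloopzero}. This would complete the proof.

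I do not expect any genuine obstacle here. The inputs are the two elementary diagram facts used above — that a diagram with a horizontal edge has one in its bottom row, and that a diagram whose bottom arc lies on two adjacent vertices $\ell,\ell+1$ can be written as $D'e_\ell$ — together with the trivial description of the $H$-orbits on unordered pairs. The only point to watch is the small values of $i$ or $j$ (for instance $i\le 1$, when a bottom arc cannot lie inside $A_1$, or $i=0$, which forces $j=n+1$ and $d_{ij}=0$): in each such case the block in question is simply too small to contain both endpoints of the arc, so the corresponding orbit never arises and the argument goes through verbatim.
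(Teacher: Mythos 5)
Your proposal is correct and follows essentially the same route as the paper: the paper likewise observes that left multiplication by $\Sym\{1,\dots,i\}\times\Sym\{i+1,\dots,i+j\}$ changes $E_{ij}$ only by a sign, writes $D\sigma=D'e_\ell$ for a suitable element of that group and $1\le\ell\le i+j-1$, and concludes by Theorem \ref{thm:eloopzero}. Your write-up merely makes explicit the diagrammatic details (equal numbers of top and bottom arcs, the orbit analysis on pairs) that the paper leaves as "it is clear".
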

\begin{proof}
For any such diagram $D$, there is a permutation $\sigma\in\Sym_i\times\Sym_j$
such that $D\sigma=D'e_\ell$ for some diagram $D'\in B_{i+j}(n)$ and $\ell$
satisfying $1\leq \ell\leq i+j-1$. The result now follows from Theorem
\ref{thm:eloopzero}.
\end{proof}

\begin{corollary}\label{cor:eijinei}
We have, for each pair $i,j$ with $i\leq j$ and $i+j\geq n+1$
$E_{ij}\in\langle E_k\rangle$ for some $k$ with $0\leq k\leq\half$.
We also have $E_{ij}^*\in\langle E_k\rangle$ for the same $k$.
\end{corollary}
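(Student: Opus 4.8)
The plan is to prove the sharper assertion that $E_{ij}\in\langle E_k\rangle$ for $k:=n+1-j$; the statement about $E_{ij}^*$ is then automatic, since $E_k^*=E_k$ makes $\langle E_k\rangle$ stable under the cellular involution, whence $E_{ij}^*\in\langle E_k\rangle^*=\langle E_k\rangle$. First note that the hypotheses $i\le j$ and $i+j\ge n+1$ give $2j\ge i+j\ge n+1$, so $j\ge\lceil(n+1)/2\rceil=n+1-\half$ and hence $0\le k\le\half$, i.e. $E_k$ really is one of the elements of Definition \ref{def:br-elts}. Since $k=i-d_{ij}$, when $d_{ij}=0$ one has $i=k$ and $E_{ij}=E_k$ (the deficiency-zero case recorded after Definition \ref{def:eijk}), which settles that case. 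Also, by Lemma \ref{lem:pi} the permutation $\pi_{ij}$ is a unit of $B_r(n)$, so throughout we may work with $E_{ij}\pi_{ij}\inv$.

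The general case should follow by induction on the deficiency $d:=d_{ij}$, the base case $d=0$ being the one just treated. For $d\ge1$ put $(i',j'):=(i-1,j)$; since $d\ge1$ we have $i'\ge0$, $i'\le j'$ and $i'+j'=i+j-1\ge n+1$, so $E_{i'j'}$ of Definition \ref{def:eijk} is defined, has deficiency $d-1$, and has the same second index $j$; hence by induction $E_{i'j'}\in\langle E_{n+1-j}\rangle=\langle E_k\rangle$, and it suffices to show $E_{ij}\in\langle E_{i-1,j}\rangle$. Using \eqref{eq:neweij} I would pull the factors not involving the summation variable out of the sum, so that $E_{ij}=F_{ij}(0)\bigl(\sum_{m}(-1)^jc_{ij}(m)\,e_i(m+d)\bigr)F_{ij}(d)\,\pi_{ij}$, and likewise $E_{i-1,j}=F_{i-1,j}(0)\bigl(\sum_{m}(-1)^jc_{i-1,j}(m)\,e_{i-1}(m+d-1)\bigr)F_{i-1,j}(d-1)\,\pi_{i-1,j}$. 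The passage between $a(1,i-1)$ and $a(1,i)$, hence between the left $F$-factors, is supplied by Lemma \ref{lem:comp1}(i), equivalently by the factorisation $F_{i-1}=\alpha_iF_i\beta_i$ of Lemma \ref{lem:lr}(iii); the remaining ingredients are an explicit permutation carrying the nested-arc pattern $e_{i-1}(m+d-1)$ to $e_i(m+d)$, together with one further generator $e_{a,b}$ to account for the extra forced arc that appears when the deficiency increases by one. Just as in the proof of Corollary \ref{cor:idemp}, the point that makes everything collapse is Corollary \ref{cor:deijzero}: any diagram in $B_{(i-1)+j}(n)$ with a horizontal edge annihilates $E_{i-1,j}$, so the numerous unwanted cross-terms produced by these manipulations vanish, and a comparison of the constants $c_{ij}(m)$ with $c_{i-1,j}(m)$ shows that what survives is a nonzero scalar multiple of $E_{ij}$. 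This gives $E_{ij}=X\,E_{i-1,j}\,Y$ for explicit $X,Y\in B_r(n)$ and closes the induction.

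The hard part is this last computation. The permutations $\pi_{ij}$ and $\pi_{i-1,j}$ are genuinely different, and the arc elements $e_i(m+d)$ and $e_{i-1}(m+d-1)$ occupy shifted windows of strands, so writing down clean $X,Y$ and checking that the surviving coefficient of each summand of $E_{ij}$ is correct will require the same sort of careful diagram bookkeeping already carried out in \S\ref{ss:basic} and in the proof of Theorem \ref{thm:eloopzero}. A route avoiding the iteration would be to realise $E_{ij}$ in one step as $U\,E_k\,U'$: interpose the full nested-arc diagram $e_i(d)$, carrying $d$ arcs, between a suitable $\Sym_r$-conjugate of $E_k$ acting on strands $1,\dots,n+1$ and a permutation, then use the annihilation property of $E_k$ (Corollary \ref{cor:ann}) to discard the spurious terms; this is cleaner to state but the identification of $U$, $U'$ and the scalar is the same combinatorial core problem.
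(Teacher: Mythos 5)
Your preliminary reductions are fine: $k=n+1-j$ does satisfy $0\leq k\leq\half$ under the hypotheses $i\leq j$, $i+j\geq n+1$, the deficiency-zero case does give $E_{ij}=E_i$, and deducing the $E_{ij}^*$ statement from $E_k^*=E_k$ (so that $\langle E_k\rangle$ is $*$-stable) is correct and in fact slightly cleaner than the paper, which reruns the argument with the sides swapped. But the heart of the corollary --- actually placing $E_{ij}$ in $\langle E_k\rangle$ --- is not proved. Your induction on the deficiency rests entirely on the claim $E_{ij}=X\,E_{i-1,j}\,Y$ for explicit $X,Y$ with a nonzero surviving scalar, and you yourself defer this as ``the hard part''; as written it is a plan, not an argument. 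Moreover the mechanism you invoke to discard the cross-terms does not cover them: Corollary \ref{cor:deijzero} gives only \emph{left} annihilation $DE_{i-1,j}=0$, and since $E_{ij}$ (unlike the $E_i$) is not $*$-invariant, the paper has no right-annihilation statement $E_{i-1,j}D=0$. Hence the terms created on the \emph{right} of $E_{i-1,j}$ by your shifting permutation and the extra arc $e_{a,b}$ have no reason to collapse, and the promised comparison of $c_{ij}(m)$ with $c_{i-1,j}(m)$ is never carried out. The same objection applies to your alternative one-step sandwich $U\,E_k\,U'$, where Corollary \ref{cor:ann} only kills diagrams multiplying $E_k$, not the debris to the right of it.

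The paper's proof avoids all diagram surgery by a short mechanism your sketch does not use: sandwich from the \emph{left} of $E_{ij}$. For $x,y\in K\Sym_{i+j}$ one has $xE_kyE_{ij}=xF_kyE_{ij}$, because each non-leading summand $F_ke_k(m)F_k$ ($m>0$) of $E_k$, after multiplication by $x$ and $y$, is a combination of diagrams of $B_{i+j}(n)$ with a horizontal edge, and these annihilate $E_{ij}$ on the left by Corollary \ref{cor:deijzero}. Since $F_{ij}E_{ij}=i!\,j!\,E_{ij}\neq 0$, everything reduces to showing that $F_{ij}=a(1,i)a(i+1,i+j)$ lies in the two-sided ideal of $K\Sym_{i+j}$ generated by $F_k$; by Lemma \ref{lem:lr}(i) (the Pieri-rule description of the ideals $\langle F_i(s)\rangle$) this holds exactly when $k\geq n+1-j$, so $k=n+1-j$ works. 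You cite Lemma \ref{lem:lr} only to relate $a(1,i-1)$ to $a(1,i)$, not to make this decisive reduction to symmetric-group ideal membership; without that step, or a completed version of your coefficient computation together with a justification for the right-hand cancellations, your proposal has a genuine gap.
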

\begin{proof}
It follows from Corollary \ref{cor:deijzero} that for any elements
$x,y\in B_{i+j}(n)$ and any $k$ such that $0\leq k\leq\half$,
we have $xE_ky E_{ij}=xF_ky E_{ij}$, since the other summands of
the product vanish. Write $F_{ij}=F_{ij}(0)$ in the notation of the
proof of Theorem \ref{thm:eloopzero}. By the above observation,
since $F_{ij}E_{ij}$ is a non-zero multiple of $E_{ij}$,
in order to show that $E_{ij}\in\langle E_k\rangle$, it will suffice
to show that there are elements $x,y\in B_{i+j}(n)$ such that
$xF_ky=F_{ij}$ for some $k$. We shall in fact show that there are
elements $x,y$ of $K\Sym_{i+j}\subset B_{i+j}(n)\subseteq B_r(n)$ which have the desired
property.

Now we have seen that in $K\Sym_{i+j}$, the ideal $\langle F_{ij}\rangle
=\oplus_\lambda I_\lambda$, where $\lambda$ runs over partitions of
$i+j$ whose first column has at least $j$ elements, and whose first
two columns have at least $i+j$ elements. But for $k=0,1,\dots,\half$,
$\langle F_k\rangle=\oplus_\mu I_mu$, where $\mu$ runs over partitions
whose first column contains at least $n+1-k$ elements, and whose first
two columns contain at least $n+1$ elements. Since $i+j\geq n+1$,
it follows that for $k\geq n+1-j$ (note that
$i+j\geq n+1,i\leq j\implies n+1-j\leq\half$), $E_{ij}\in\langle F_k\rangle$,
whence there are elements $x,y\in K\Sym_{i+j}$ such that $E_{ij}=xF_ky$, whence
$E_{ij}\in\langle E_k\rangle$.

To show that $E_{ij}^*\in\langle E_k\rangle$, observe that by taking the $^*$ of
Corollary \ref{cor:deijzero}, we have $E_{ij}^*D=0$ of any diagram in $B_{i+j}(n)$
with at least one horizontal edge. Hence as above, we see that for any elements
$x,y\in B_{i+j}(n)$, $E_{ij}^*xE_ky=E_{ij}^*xF_ky$, and the argument
proceeds as above.
This completes the proof of the Corollary.
\end{proof}

We may now complete the
\begin{proof}[Proof of Theorem  \ref{thm:kergens}]
It follows from Proposition \ref{prop:genseij} that $\ker(\nu)$ is generated by
the $E_{ij}$ and $E_{ij}^*$. But by Corollary \ref{cor:eijinei} each
of the elements $E_{ij}$ and $E_{ij}^*$
is in the ideal generated by $E_0,E_1,\dots,E_\half$.
Theorem \ref{thm:kergens} follows.
\end{proof}

\section{Proof of the main theorem}

In this section we complete the proof of Theorem \ref{thm:main}. The arguments
are similar to the ones employed in the last section.

\begin{proof}[Proof of Theorem \ref{thm:main}]
The first assertion of the Theorem is a special case of Corollary \ref{cor:idemp}.
To prove that $E=E_\half$ generates $\ker(\nu)$ we proceed as follows. By Proposition
\ref{prop:einker}, the $E_i$ are in $\ker(\nu)$, and by Theorem \ref{thm:kergens}
$ker(\nu)$ is generated by the $E_i$.
The result will therefore follow if we show that
\be\label{eq:contains}
\text{For $i=1,\dots,\half$, $E_{i-1}$ is in the ideal generated by $E_{i}$.}
\ee
For if \eqref{eq:contains} holds, then writing $\langle y\rangle$ for the ideal
of $B_r(n)$ generated by any element $y\in B_r(n)$ we would have
$\langle E\rangle\supseteq\langle E_{\half-1}\rangle\supseteq\dots\supseteq
\langle E_1\rangle\supseteq\langle E_0\rangle$.

To prove \eqref{eq:contains}, let $\alpha_i,\beta_i$ be elements of
$B_r(n)$ as in Lemma \ref{lem:lr}. Then for any
$i$ such that $i\leq \frac{n+1}{2}$, $F_{i-1}=\alpha_i F_i\beta_i$.
Consider the element $x:=E_{i-1}\alpha_iE_i\beta_i\in\langle E_i\rangle$.
Now $\alpha_iE_i\beta_i=\sum_{j=0}^i\alpha_iE_i(j)\beta_i$, where
$E_i(j)=(-1)^jc_i(j)F_ie_i(j)F_i$ is a sum of diagrams in $B_{n+1}(n)$
with at least $j$ horizontal arcs. Hence by Corollary \ref{cor:ann},
$E_{i-1}\alpha_iE_i(j)\beta_i=0$ if $j>0$.

It follows that $x=E_{i-1}\alpha_iF_i\beta_i$, since $E_i(0)=F_i$.
Hence $x=E_{i-1}F_{i-1}=(i-1)!(n-i)!E_{i-1}\in\langle E_i\rangle$.
This proves \eqref{eq:contains}, and completes the proof of Theorem
\ref{thm:main}.
\end{proof}

\section{Cellular structure}

It is well known that $B_r(n)$ has a cellular structure \cite[\S 4]{GL96} in which
the cells are indexed by the set $\Lambda$ of partitions
$\lambda=(\lambda_1\geq\dots\geq\lambda_p)$, with $|\lambda|=\sum_{i=1}^p\lambda_i\in\CT$,
where $\CT=\{t\in\Z\mid 0\leq t\leq r;\;\;t\equiv r\;(\mod 2)\}$. The partial order
on $\Lambda$ is given by the rule that $\lambda<\mu$ if $|\lambda|<|\mu|$, or
$|\lambda|=|\mu|$ and $\lambda<\mu$ in the dominance order.

We therefore have cell modules $W(\lambda)$ ($\lambda\in\Lambda$)
for $B_r(n)$. These are endowed with
a canonical invariant form, whose radical
$\Rad(\lambda)$ has irreducible quotient which
we write here as $I_\lambda$. It is part of the general theory of cellular algebras
that the non-zero $I_\lambda$ form a complete
set of representatives of the isomorphism classes
of $B_r(n)$. Now $B_r(\delta)$ is quasi-hereditary \cite{X}
whenever $\delta\neq 0$, from which it follows that
the $I_\lambda$ are each non-zero, and therefore that
the irreducible $B_r(n)$-modules are indexed by $\Lambda$.

Now we have assumed that the characteristic of $K$ is zero. In consequence,
$V^{\otimes r}$ is semisimple, both as $\Or(V)$-module, and as $B_r(n)$-module.

\begin{lemma}\label{lem:doubcent}
There is a subset $\Lambda^0$ of $\Lambda$ such that as $\Or(V)\times B_r(n)$-module
$$
V^{\otimes r}\simeq\oplus_{\lambda\in\Lambda^0}L_\lambda\otimes I_\lambda,
$$
where $\Lambda^0$ is the subset of $\Lambda$ consisting of partitions
whose first and second
columns have fewer than $n+1$ elements in total. Here $L_\lambda$ and $I_\lambda$
are respectively the simple $\Or(V)$-module and the simple $B_r(V)$-module
corresponding to $\lambda$.
\end{lemma}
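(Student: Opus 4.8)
The plan is to use the double centralizer theorem together with the cellular structure, leveraging the fact that in characteristic zero $V^{\otimes r}$ is semisimple as a $B_r(n)$-module. Since $\ker(\nu)$ is generated by the idempotent $E$ (up to scalar), the image $\nu(B_r(n)) = B_r(n)/\ker(\nu) = \End_{\Or(V)}(V^{\otimes r})$ is itself a semisimple algebra, and its simple modules are precisely those $I_\lambda$ on which $E$ acts as zero. So the first step is to identify, via the double centralizer theorem applied to the semisimple pair $(\Or(V), \End_{\Or(V)}(V^{\otimes r}))$ acting on $V^{\otimes r}$, that $V^{\otimes r} \simeq \bigoplus_\lambda L_\lambda \otimes I_\lambda$ where $\lambda$ runs over the indexing set for the simple modules of $\nu(B_r(n))$, with $L_\lambda = \Hom_{B_r(n)}(I_\lambda, V^{\otimes r})$ the corresponding simple $\Or(V)$-module. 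The content of the lemma is then the identification of this indexing set as $\Lambda^0$.

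Next I would pin down $\Lambda^0 = \{\lambda \in \Lambda : \lambda E = 0 \text{ on } W(\lambda)\}$, equivalently the set of $\lambda$ for which $I_\lambda \neq 0$ as a module over $B_r(n)/\langle E \rangle$. The key computation is to determine on which cell modules $W(\lambda)$ the element $E = E_{\half}$ acts nontrivially. Recall from Lemma~\ref{lem:lr} that $\langle F_k \rangle$ in $K\Sym_r$ is the sum of $I_\mu$ with $\mu$ having at least $n+1-k$ boxes in its first column and at least $n+1$ boxes in its first two columns; the leading term of $E = E_{\half}$ is $F_{\half}$, and the lower-order terms all involve $e_i(j)$ with $j > 0$, hence contain horizontal arcs. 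A Brauer-algebra cell module $W(\lambda)$ with $|\lambda| = t$ is built from $\lambda$-tableaux together with a choice of $(r-t)/2$ horizontal arcs; the symmetric group $\Sym_t$ acts on the "top" via $K\Sym_t$ through the Specht module $S^\lambda$, while the $e_i$'s interact with the arc structure. The element $E$ acts as zero on $W(\lambda)$ precisely when its leading symmetric-group part $F_{\half}$ already annihilates $S^\lambda$ (after accounting for how the arcs absorb the $e_i(j)$ terms) — and by the Littlewood-Richardson/Pieri analysis this happens exactly when $\lambda$'s first two columns have at most $n$ boxes in total, i.e. $\lambda \in \Lambda^0$.

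The main obstacle, and the step requiring the most care, is the precise interaction between the idempotent $E$ and the arc structure of a general Brauer cell module $W(\lambda)$: one must show that the lower-order terms $F_i e_i(j) F_i$ ($j > 0$) of $E$ contribute nothing new to the question of whether $E$ annihilates $W(\lambda)$ beyond what the leading term $F_{\half}$ already decides. Concretely, if $\lambda$ has $\geq n+1$ boxes in its first two columns then some diagram summand of $E$ acts nontrivially on $W(\lambda)$ (so $I_\lambda$ survives in the quotient), while if $\lambda$ has $\leq n$ boxes in its first two columns then $F_{\half}$ kills $S^\lambda$ and, using the annihilation relations $e_i E = E e_i = 0$ from Theorem~\ref{thm:ann} to handle arcs, one checks $E$ acts as $0$ on all of $W(\lambda)$. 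I would organize this by filtering $W(\lambda)$ by number of through-strings and reducing to the symmetric-group statement of Lemma~\ref{lem:lr}(i).

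Finally, I would assemble the pieces: the simple $B_r(n)/\langle E\rangle = \nu(B_r(n))$-modules are the $I_\lambda$ for $\lambda \in \Lambda^0$; semisimplicity of $V^{\otimes r}$ over both $\Or(V)$ and $\End_{\Or(V)}(V^{\otimes r})$ together with the double centralizer theorem then yields $V^{\otimes r} \simeq \bigoplus_{\lambda \in \Lambda^0} L_\lambda \otimes I_\lambda$ with the $L_\lambda$ distinct simple $\Or(V)$-modules, which is the assertion. The combinatorial identification $\Lambda^0 = \{\lambda : \text{first two columns of } \lambda \text{ total} \leq n\}$ follows from the description of $\langle F_k\rangle$ in Lemma~\ref{lem:lr} combined with the fact that $E = E_{\half}$ and $\half = [\frac{n+1}{2}]$ is the maximal value of $i$.
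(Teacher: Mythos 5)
Your overall skeleton --- semisimplicity of $V^{\ot r}$ in characteristic zero, the double centralizer theorem giving $V^{\ot r}\simeq\oplus_\lambda L_\lambda\ot I_\lambda$ over the simple modules of $\nu(B_r(n))=B_r(n)/\langle E\rangle$, and then an identification of that index set --- is the same as the paper's, which however disposes of the second step by declaring the identification of $\Lambda^0$ ``well known'' (it is Weyl's classical description of the $\Or(V)$-content of tensor space) rather than deriving it from the Brauer algebra side. The genuine problem is in your derivation. The correct criterion for $\lambda$ to lie in the index set is that $E$ annihilate the simple head $I_\lambda=W(\lambda)/\Rad(\lambda)$, i.e.\ that $EW(\lambda)\subseteq\Rad(\lambda)$; you replace this by the condition that $E$ annihilate $W(\lambda)$ itself, asserting that for $\lambda\in\Lambda^0$ one ``checks $E$ acts as $0$ on all of $W(\lambda)$''. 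That assertion is false whenever $\Rad(\lambda)\neq 0$, which is the generic situation for $r\geq n+2$: indeed Proposition \ref{prop:cells} of the paper states precisely that $\Rad(\lambda)=B_r(n)EW(\lambda)$ for $\lambda\in\Lambda^0$, so $E$ acts nontrivially on $W(\lambda)$ exactly when $\Rad(\lambda)\neq 0$. A concrete counterexample: $n=1$, $r=3$, $\lambda=(1)\in\Lambda^0$; here $\dim W(\lambda)=3$ but $\dim I_\lambda=\dim\End_{\Or(V)}(V^{\ot 3})=1$, so $\Rad(\lambda)$ is $2$-dimensional and $EW(\lambda)\neq 0$.

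The same confusion affects your converse direction: from ``some diagram summand of $E$ acts nontrivially on $W(\lambda)$'' you conclude that ``$I_\lambda$ survives in the quotient'', but (a) nonvanishing of one summand does not preclude cancellation in $E$ itself, and (b) nonvanishing of $E$ on $W(\lambda)$ says nothing about its action on the quotient $I_\lambda$ --- and in any case survival of $I_\lambda$ as a $B_r(n)/\langle E\rangle$-module is the condition for $\lambda\in\Lambda^0$, the opposite of what you want for $\lambda$ whose first two columns total at least $n+1$ boxes. To repair the argument you would need to work with the cellular bilinear form on $W(\lambda)$ (e.g.\ show that $\phi_\lambda(Ex,y)=0$ for all $x,y$ precisely when $\lambda\in\Lambda^0$), or simply invoke the classical decomposition of $V^{\ot r}$ under $\Or(V)$ as the paper does. (A small additional point: the image $\nu(B_r(n))$ is semisimple because it is $\End_{\Or(V)}$ of a semisimple module, not because its kernel is generated by an idempotent.)
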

\begin{proof}
The statement that there is a decomposition of the type shown, for some
subset of $\Lambda$, follows from generalities about double centraliser theory.
The identification of $\Lambda^0$ in our case follows easily from
our Theorem \ref{thm:main}, but is in any case well known.
\end{proof}

One consequence of this lemma is the the multiplicity of $L_\lambda$ in
$V^{\otimes r}$ is the dimension of $I_\lambda$. However in consequence of
the non-semisimple nature of $B_r(n)$ for $r\geq n+2$, these dimensions are
not given by purely combinatorial (cellular) data. Nonetheless our main theorem
is relevant to this decomposition through the following result.

\begin{proposition}\label{prop:cells}
Let $E=E_\half$ be the element of $B_r(n)$ defined in Definition \ref{def:br-elts}(iii).
Then for $\lambda\in\Lambda^0$, the submodule $\Rad(\lambda)$ of $W(\lambda)$
is given by $\Rad(\lambda)=B_r(n)EW(\lambda)$. That is, $\Rad(\lambda)$ is
generated by $EW(\lambda)$.
\end{proposition}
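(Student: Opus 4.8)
The plan is to show the two inclusions $\Rad(\lambda)\subseteq B_r(n)EW(\lambda)$ and $B_r(n)EW(\lambda)\subseteq\Rad(\lambda)$ separately. For the easy inclusion, recall that $E$ lies in $\ker(\nu)$, so $E$ acts as zero on $V^{\ot r}$. Since $\lambda\in\Lambda^0$, the cell module $W(\lambda)$ has simple head $I_\lambda$ which (by Lemma~\ref{lem:doubcent}) occurs in $V^{\ot r}$; in the semisimple situation $V^{\ot r}$ is a sum of copies of the $I_\lambda$ with $\lambda\in\Lambda^0$. The quotient $W(\lambda)/\Rad(\lambda)\cong I_\lambda$ embeds in $V^{\ot r}$, so $E$ kills $W(\lambda)/\Rad(\lambda)$, which says exactly $EW(\lambda)\subseteq\Rad(\lambda)$, hence $B_r(n)EW(\lambda)\subseteq\Rad(\lambda)$ since $\Rad(\lambda)$ is a submodule.

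For the reverse inclusion, the point is that $B_r(n)EW(\lambda)$ is a submodule of $W(\lambda)$, and $W(\lambda)/B_r(n)EW(\lambda)$ is a module on which $E$, and hence every element of the ideal $\langle E\rangle=\ker(\nu)$, acts as zero; thus it is a module for $B_r(n)/\ker(\nu)\cong\End_{\Or(V)}(V^{\ot r})$ by Theorem~\ref{thm:main}. Since that algebra is semisimple (characteristic zero), $W(\lambda)/B_r(n)EW(\lambda)$ is semisimple, so its radical is zero, i.e. the radical of the induced form on $W(\lambda)$ is contained in $B_r(n)EW(\lambda)$. The one thing needing care here is that the canonical bilinear form on $W(\lambda)$ passes to a well-defined nondegenerate form on $W(\lambda)/B_r(n)EW(\lambda)$: one must check that $B_r(n)EW(\lambda)$ is contained in the radical of the form, equivalently that $\langle Ex, y\rangle_\lambda=0$ for all $x,y\in W(\lambda)$. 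This follows because $\langle bx,y\rangle_\lambda=\langle x,b^*y\rangle_\lambda$ for $b\in B_r(n)$ (cellularity), together with $E^*=E$ and the fact that $E$ annihilates the simple quotient $I_\lambda$, so $Ey\in\Rad(\lambda)$ and the form pairs $\Rad(\lambda)$ trivially with all of $W(\lambda)$. Combining, $\Rad(\lambda)\subseteq B_r(n)EW(\lambda)$, and with the first paragraph we get equality.

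The main obstacle is the bookkeeping in the second inclusion: one needs $\Rad(\lambda)$ to be exactly the radical of the form (true by definition of the cellular radical), $B_r(n)EW(\lambda)$ to be an honest submodule contained in $\Rad(\lambda)$ (from the first inclusion), and then the quotient $W(\lambda)/B_r(n)EW(\lambda)$ to be semisimple as a module over the semisimple algebra $B_r(n)/\ker(\nu)$. Once semisimplicity is in hand, a semisimple module whose quotient by a submodule $N$ with $N\subseteq\Rad$ has nondegenerate induced form forces $N=\Rad$; so really the crux is verifying the induced form descends, which is the displayed identity $\langle Ex,y\rangle_\lambda=\langle x,Ey\rangle_\lambda$ and $Ey\in\Rad(\lambda)$. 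Everything else is formal cellular-algebra and double-centralizer machinery, already assembled in the earlier sections.
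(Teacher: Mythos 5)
Your argument is correct, and for the hard inclusion it takes a genuinely different route from the paper. Both proofs establish $B_r(n)EW(\lambda)\subseteq\Rad(\lambda)$ the same way (for $\lambda\in\Lambda^0$ the simple head $I_\lambda$ occurs in $V^{\ot r}$, which $\ker(\nu)$ annihilates). For the reverse inclusion the paper observes that $\langle E\rangle$ is a $*$-stable ideal containing the radical of the algebra $B_r(n)$ and then invokes the local criterion of \cite[Theorem 5.4(3)]{LZ2}, which yields $\langle E\rangle W(\lambda)\supseteq\Rad(\lambda)$ for \emph{all} $\lambda\in\Lambda$; you instead note that $W(\lambda)/\langle E\rangle W(\lambda)$ is a module over the semisimple algebra $B_r(n)/\ker(\nu)\cong\End_{\Or(V)}(V^{\ot r})$, hence semisimple, so the Jacobson radical of $W(\lambda)$ lies in $\langle E\rangle W(\lambda)$. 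That is a clean, more elementary argument, but it silently uses the identification of $\Rad(\lambda)$ (the radical of the cellular form) with the Jacobson radical of the module $W(\lambda)$; this is \cite[Prop.\ 3.2]{GL96} and requires the form on $W(\lambda)$ to be nonzero, which holds here by quasi-heredity (or, for $\lambda\in\Lambda^0$, because $I_\lambda\neq 0$). You should cite that fact explicitly. Also, your closing paragraph is off target: well-definedness of the induced form on $W(\lambda)/N$ is exactly the already-proved inclusion $N\subseteq\Rad(\lambda)$, while \emph{nondegeneracy} of the induced form is equivalent to the conclusion $N=\Rad(\lambda)$ and so cannot be an input — the Jacobson-radical argument of your second paragraph already closes the proof and the form-theoretic detour should be dropped. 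Note finally that your route is specific to characteristic zero, whereas the paper's criterion is framed so as to transfer to the more general settings discussed in its last section.
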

\begin{proof}
First, note that by Lemma \ref{lem:doubcent},
$I_\lambda$ is a summand of $V^{\otimes r}$ if and only if
$\lambda\in\Lambda^0$. Hence $\Lambda^0$ consists of those $\lambda$
such that $I_\lambda$ is annihilated by $\ker(\nu)$, and hence by
$E$ since by Theorem \ref{thm:main} $E$ generates $\ker(\nu)$.

It follows that $EW(\lambda)\subseteq \Rad(\lambda)$. But the ideal
$\langle E\rangle$ contains the radical of the algebra $B_r(n)$.
Hence by the local criterion proved in \cite[Theorem 5.4(3)]{LZ2}
for a self dual ideal to contain the radical, it follows that for all
$\lambda\in \Lambda$, $\langle E\rangle W(\lambda)\supseteq \Rad(\lambda)$.
It follows that for $\lambda\in\Lambda^0$,
$\langle E\rangle W(\lambda)=\Rad(\lambda)$, and the Proposition follows.
\end{proof}

We conclude this section with the remark that by the above Proposition,
we have, for $\lambda\in\Lambda^0$,
$$
I_\lambda\simeq \frac{W(\lambda)}{\Rad(\lambda)}\simeq\frac{W(\lambda)}{
\langle E\rangle W(\lambda)},
$$
and this makes it possible in principle to compute the dimension
of $I_\lambda$ by identifying the subspace $EW(\lambda)$ of $W(\lambda)$.

\section{Change of base field, and remarks about the quantum case}

In this section we discuss the situation when the field $K$ has positive characteristic,
as well as the quantum analogue of our result, which applies to the Birman-Wenzl-Murakami
(BMW) algebra.

\subsection{The case of positive characteristic}

When $K$ is an arbitrary field, our basic setup remains the same. We still have
the map $\nu:B_r(n)\to \End_{\Or(V)}(V^{\otimes r})$, and it is still surjective.
Proposition \ref{prop:commute} also remains true.

It is important to note that although
the elements $E_i$ and $E_{ij}$ have denominators in their definitions, they are actually
linear combinations of diagrams with coefficients $\pm 1$. Therefore they are elements of the
Brauer algebra over $\Z$, and may be thought of independently of the ground field.
Many of the results above remain true for arbitrary $K$.
For any commutative ring $R$, with $\delta\in R$, write $B_r^R(\delta)$ for the
Brauer algebra over $R$, which may be defined by its presentation as given
in Lemma \ref{lem:brprops}(iii); $B_r^R(\delta)$ is free over $R$, with basis the set of
Brauer diagrams. As usual, $B_r(n)=B_r^K(n)$.

\begin{lemma}\label{lem:spangenk}
Let $K$ be a field of characteristic other than two. The elements $b(S,S',\beta)\in B_r^K(n)$
of Lemma \ref{lem:ker1} span $\ker(\nu)$.
\end{lemma}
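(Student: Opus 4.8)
The plan is to reduce the positive-characteristic statement to the characteristic-zero case via a specialization argument, using the fact that the elements $b(S,S',\beta)$ are defined over $\Z$. First I would observe that $\ker(\nu)$ is the kernel of the specialization at $\delta=n$ of the $R$-linear map $\nu^R:B_r^R(n)\to\End(V_R^{\otimes r})$, where $V_R=R^n$; this map, like $\tau$, is defined over $\Z$ in the sense that it comes from base change of a map $\nu^\Z:B_r^\Z(n)\to\End_{\Z}((\Z^n)^{\otimes r})$. By Corollary \ref{cor:fftbrauer} we have $\ker(\nu)=\ker(\tau)$, and the analogous identity holds over any base, so it suffices to work with $\tau$. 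The key point is that the entries of the matrix of $\tau^\Z$ (with respect to the diagram basis of $B_r^\Z(n)$ and the standard basis of $(\Z^n)^{\otimes 2r}$) are all $0$ or $1$: $\tau(D)=t_D$ is a genuine sum of basis vectors $b_{i_1}\otimes\cdots\otimes b_{i_{2r}}$ with coefficient $1$.

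Next I would set $M=\ker(\tau^\Z)\subseteq B_r^\Z(n)$, a $\Z$-submodule that is pure (i.e. $B_r^\Z(n)/M$ is torsion-free, being a submodule of the free module $(\Z^n)^{\otimes 2r}$), hence $M$ is a direct summand and $M\otimes_\Z K=\ker(\tau^K)=\ker(\nu)$ for \emph{every} field $K$ — here one uses that $\tau^K=\tau^\Z\otimes_\Z K$ and that purity of $M$ makes $-\otimes_\Z K$ exact on the relevant short exact sequence. In particular $M\otimes_\Z\Q=\ker(\tau^\Q)$, which by Theorem \ref{thm:orthsft} (via Lemma \ref{lem:ker1}) is spanned over $\Q$ by the elements $b(S,S',\beta)$. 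Since these elements lie in $B_r^\Z(n)$ and in fact in $M$, I would then argue: the $\Z$-submodule $N\subseteq M$ generated by all $b(S,S',\beta)$ satisfies $N\otimes_\Z\Q=M\otimes_\Z\Q$, so $M/N$ is torsion. To conclude that $N\otimes_\Z K=M\otimes_\Z K$ for $K$ of characteristic $p\neq 2$, the cleanest route is to show $M/N$ has no $p$-torsion for odd $p$; for this I would produce, over $\Z[\tfrac12]$, an explicit spanning set of $M\otimes_\Z\Z[\tfrac12]$ consisting of $\Z[\tfrac12]$-combinations of the $b(S,S',\beta)$ — for instance by running the arguments of Sections 4–6 (which involve only the anti-involution $*$, the alternating idempotents, and denominators built from factorials $(n+1-i)!$ etc.) over $\Z[\tfrac12]$ wherever a factor of $2$ is actually needed, noting from the proof of Lemma \ref{lem:capzero} that the only genuinely required inversion is that of $2$.

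A more economical alternative, which I expect to be the actual argument here and which avoids re-deriving Sections 4–6: by Corollary \ref{cor:fftbrauer} and its characteristic-free analogue, $\ker(\nu)=\ker(\tau^K)$, and $\tau^K$ factors as $B_r^K(n)\to (V^{\otimes 2r})^*{}^{\Or(V)}$, the image being the span of the $\gamma_D$; so $\ker(\nu)$ is exactly the space of linear relations among the $\gamma_D$. The second fundamental theorem over $K$ of characteristic $\neq 2$ — available from \cite{Ri} (Procesi's formulation holds over any field in which $2$ is invertible) — states precisely that every such relation is a consequence of the relations $\gamma(S,S',\beta)=0$ of Lemma \ref{lem:gammainker}, whose proof (singularity of an $(n+1)\times(n+1)$ Gram matrix) is characteristic-free. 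Translating back through $\tau^K$ as in Lemma \ref{lem:ker1} gives that the $b(S,S',\beta)$ span $\ker(\nu)$.

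\textbf{Main obstacle.} The delicate point is the role of characteristic $2$: the hypothesis $\mathrm{char}\,K\neq 2$ is needed because Lemma \ref{lem:capzero}, and more importantly the standard ``relation $\Rightarrow$ consequence of Gram-determinant relations'' form of the orthogonal SFT, rely on being able to symmetrize/antisymmetrize, i.e. on $2$ being invertible; in characteristic $2$ symmetric and alternating forms behave differently and the statement genuinely changes. So the crux is to cite (or isolate) a version of the second fundamental theorem for $\Or(V)$ valid over any field with $2$ invertible, and to check that the reduction steps ($\ker\nu=\ker\tau$, the identification of $\mathrm{im}(\tau)$ with the invariants, the passage from relations among $\gamma_D$ to elements of $B_r^K(n)$) introduce no further division. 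All of these are visibly characteristic-free except for the appeal to the SFT itself, so the weight of the proof rests on that citation.
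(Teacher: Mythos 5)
Your ``more economical alternative'' is exactly the paper's argument: the paper proves the lemma in two lines by citing that \cite[Prop.~21]{Ri} holds over any field of characteristic other than two and then invoking the commutativity of the diagram \eqref{diag:triangle}, precisely the reduction you describe, and you correctly identify the weight of the proof as resting on that citation. The preliminary specialization-over-$\Z$ route is unnecessary (and its final step, controlling $p$-torsion of $M/N$ by re-running Sections 4--6 over $\Z[\tfrac12]$, is the sketchiest part), but since you flag it as secondary and land on the paper's actual proof, the proposal is correct and essentially the same as the paper's.
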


This is because the version of the second fundamental theorem in \cite[Prop. 21]{Ri}
is valid in this generality, and our statement follows from the commutativity
of the diagram \eqref{diag:triangle}.

Next we have

\begin{proposition}\label{thm:eijgenk} Let $K$ be a field of characteristic other than two.
Let $E_{ij}\in B_r(n)$ be the elements defined in Definition \ref{def:eijk}.
Then $\ker(\nu)$ is generated as ideal of $B_r(n)$ by the $E_{ij},E_{ij}^*$.
\end{proposition}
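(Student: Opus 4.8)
The plan is to carry over the characteristic zero argument of Theorem~\ref{thm:kergens} essentially verbatim, checking at each step that the only place characteristic enters is through division by~$2$. First I would invoke Lemma~\ref{lem:spangenk}, which is valid whenever $\operatorname{char} K\neq 2$, to conclude that $\ker(\nu)$ is spanned by the elements $b(S,S',\beta)$. The point is that the whole reduction from these spanning elements to the $E_{ij}$ and $E_{ij}^*$ is \emph{combinatorial}: it only uses the action of $\Sym_r\times\Sym_r$ on Brauer diagrams by pre- and post-multiplication, the observation that a summand of $b(S,S',\beta)$ of deficiency $d$ has at least $d$ horizontal edges, and the fact (Lemma~\ref{lem:capzero}) that $e_{ij}a(H)=0$ whenever $H$ contains the transposition $s_{ij}$ --- and the proof of Lemma~\ref{lem:capzero} is the \emph{only} step that divides by~$2$ (it writes $a(H)=\tfrac12(1-s_{ij})a(H)$). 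Since we have excluded characteristic~$2$, that lemma, and hence Proposition~\ref{prop:genseij}(ii), goes through unchanged, giving that $\ker(\nu)$ is generated by the $E_{ij}$ and $E_{ij}^*$.

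The subtlety that needs care is that the elements $E_{ij}$ themselves are \emph{defined} via the constants $c_{ij}(k)$, which have factorials in their denominators; so a priori $E_{ij}$ need not make sense over a field of small positive characteristic. The resolution, as the paragraph preceding the statement already emphasises, is that $E_{ij}$ is in fact (by the orbit-counting argument in the proof of Proposition~\ref{prop:genseij}(i), identifying $E_{ij}$ with $b(S_{ij},S'_{ij},\beta)$) equal to an alternating sum of $(n+1)!$ Brauer diagrams with \emph{all coefficients $\pm1$}. Thus $E_{ij}$ is a well-defined element of the Brauer algebra over $\Z$, and its image in $B_r^K(n)=B_r^K(n)$ is defined for every $K$; moreover the identification $E_{ij}=b(S_{ij},S'_{ij},\beta_{ij})$ is an identity of integral linear combinations of diagrams, hence holds over any $K$. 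In particular $E_{ij}\in\ker(\nu)$ for any $K$ (characteristic $\neq 2$ being needed only to know, via Lemma~\ref{lem:spangenk}, that the $b(S,S',\beta)$ span the \emph{whole} kernel, not merely lie inside it), which is part~(i) of Proposition~\ref{prop:genseij} in this generality.

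So the proof I would write is short: state that by Lemma~\ref{lem:spangenk} the $b(S,S',\beta)$ span $\ker(\nu)$; note that $E_{ij}$, being an alternating sum of Brauer diagrams with $\pm1$ coefficients, is defined over $\Z$ and hence over $K$, and that the integral identity $E_{ij}=b(S_{ij},S'_{ij},\beta_{ij})$ of Proposition~\ref{prop:genseij}(i) persists over $K$, so $E_{ij},E_{ij}^*\in\ker(\nu)$; then observe that the reduction in the proof of Proposition~\ref{prop:genseij}(ii) of an arbitrary $b(S,S',\beta)$ to the form $\pi E_{ij}\pi'D_\alpha$ or $\pi E_{ij}^*\pi'D_\alpha$ uses only diagram combinatorics and Lemma~\ref{lem:capzero}, both of which are valid for $\operatorname{char}K\neq 2$. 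I expect the main (and only real) obstacle to be the bookkeeping needed to confirm that no further division --- beyond the single use of $\tfrac12$ in Lemma~\ref{lem:capzero} --- is hidden in the proof of Proposition~\ref{prop:genseij}; in particular one should double-check that the normalising constants $c_{ij}(k)$ never have to be \emph{inverted} in that argument (they do not: they appear only as the prescribed integer multiplicities of the $H\times H$-orbits, and the claim $E_{ij}=b(S_{ij},S'_{ij},\beta_{ij})$ is an assertion that these multiplicities come out to $1$ after summing, an integral statement).
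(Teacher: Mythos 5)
Your proposal is correct and follows essentially the same route as the paper: the paper's proof likewise rests on the observation that $E_{ij}$, being one of the $b(S,S',\beta)$, is an alternating sum of diagrams with coefficients $\pm1$ and hence defined over $\Z$, together with the remark that the purely combinatorial reduction in the proof of Proposition~\ref{prop:genseij}(ii) (and the spanning statement of Lemma~\ref{lem:spangenk}) carries over verbatim when $\operatorname{char}K\neq 2$. Your extra bookkeeping about where division by $2$ could hide is a sensible elaboration of the same argument, not a different one.
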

\begin{proof}
Note that although the definition of $E_{ij}$ as given involves denominators,
since $E_{ij}$ is actually one of the elements $b(S,S',\beta)$, it is a $\Z$-linear
combination of diagrams, and hence may be interpreted as an element of $B_r^\Z(n)$,
and hence of $B_r^R(n)$ for any ring $R$.

The proof of the current proposition involves merely the observation
that the proof of Proposition \ref{prop:genseij}(ii) remains valid in this more general
setting.
\end{proof}

\begin{theorem}
Let $K$ be any ring, and let $E_{ij}\in B_r^K(n)$ be as in the previous proposition.
Then for $\ell$ with $1\leq \ell\leq i+j-1$, we have $e_\ell E_{ij}=0$.
\end{theorem}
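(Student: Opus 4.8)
The plan is to reduce the positive-characteristic statement to the characteristic-zero computation already carried out in the proof of Theorem~\ref{thm:eloopzero}. The essential point is that the identity $e_\ell E_{ij}=0$ is an identity among $\Z$-linear combinations of Brauer diagrams: by Proposition~\ref{thm:eijgenk} (or rather the observation in its proof), $E_{ij}$ coincides with one of the elements $b(S,S',\beta)$, hence lies in $B_r^\Z(n)$ as a $\pm1$-combination of diagrams, and $e_\ell$ is itself a diagram. Therefore the product $e_\ell E_{ij}$ is a well-defined element of $B_r^\Z(n)$, whose image in $B_r^K(n)$ is obtained by the canonical ring homomorphism $\Z[\delta]\to K$ sending $\delta\mapsto n$. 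So it suffices to show $e_\ell E_{ij}=0$ already in $B_r^\Z(n)$, or even in $B_r^{\Q}(n)=B_r^{\Z}(n)\otimes_\Z\Q$, since $B_r^\Z(n)\hookrightarrow B_r^\Q(n)$ (the Brauer algebra over $\Z$ is free with the diagram basis, so the natural map to the $\Q$-algebra is injective).

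Next I would invoke the characteristic-zero result. Over $\Q$, Theorem~\ref{thm:eloopzero} gives exactly $e_\ell E_{ij}=0$ for $1\leq\ell\leq i+j-1$. Tracing back through the reduction: the case $\ell\neq i$ is immediate from Lemma~\ref{lem:capzero} (which holds over any ring, since its proof only uses $\tfrac12\in K$ — and here we may work over $\Q$ before specialising), and the case $\ell=i$ is the content of the computation in the proof of Theorem~\ref{thm:eloopzero}, whose final cancellation is the identity $A_{k+1+d_{ij}}c_{ij}(k+1)=(i-k-d_{ij})!(j-k-d_{ij})!\,B_{k+d_{ij}}c_{ij}(k)$. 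That identity is an identity of rational numbers, established once and for all in characteristic zero. Applying the specialisation homomorphism $B_r^\Q(n)\to B_r^K(n)$ — which makes sense here because $e_\ell E_{ij}$, being zero, in particular lies in the image of $B_r^\Z(n)$ — yields $e_\ell E_{ij}=0$ in $B_r^K(n)$ for any ring $K$.

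I would write this up in two short steps: first, the remark that $e_\ell E_{ij}\in B_r^\Z(n)$ is a $\Z$-combination of diagrams (using Proposition~\ref{thm:eijgenk} and the fact that $E_{ij}=b(S,S',\beta)$); second, the observation that this $\Z$-combination is zero because it is zero after base change to $\Q$, where Theorem~\ref{thm:eloopzero} applies, together with the injectivity of $B_r^\Z(n)\to B_r^\Q(n)$. The mild subtlety — and the only place one must be slightly careful — is that the intermediate expressions $F_{ij}(d)$, $e_i(k)$ appearing in the proof of Theorem~\ref{thm:eloopzero} involve denominators through the $c_{ij}(k)$, so the \emph{individual terms} of $e_\ell E_{ij}$ are not integral; what is integral is only the full sum $E_{ij}$ (and hence $e_\ell E_{ij}$). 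Thus the argument genuinely must be: compute over $\Q$, get $0$, and then note $0$ is integral. I expect this bookkeeping — keeping straight which quantities are integral and invoking the freeness of $B_r^\Z(n)$ at the right moment — to be the only real obstacle; the algebra itself has already been done.
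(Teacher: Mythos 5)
Your proposal is correct and follows essentially the same route as the paper: the paper likewise observes that, despite the denominators in the proof of Theorem~\ref{thm:eloopzero}, the identity $e_\ell E_{ij}=0$ lives in $B_r^\Z(n)$ (since $E_{ij}=b(S,S',\beta)$ is a $\pm1$-combination of diagrams) and then applies the specialisation functor $K\otimes_\Z-$. Your explicit use of the injection $B_r^\Z(n)\hookrightarrow B_r^\Q(n)$ simply spells out the step the paper summarises as ``restated (and proved in the same way) as a result in $B_r^\Z(n)$.''
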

\begin{proof}
Although the proof of Theorem \ref{thm:eloopzero} involves denominators, it is clear that
it may be restated (and proved in the same way) as a result in $B_r^\Z(n)$.
Applying the specialisation functor $K\ot_\Z$, we obtain the present statement.
\end{proof}

\begin{theorem}\label{thm:poschar} Let $K$ be a field of characteristic
$p>2(n+1)$. Then $\ker(\nu)=\langle E\rangle$, where $E$ is the element
$E_\half$ of Theorem \ref{thm:main}.
\end{theorem}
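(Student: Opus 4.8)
The plan is to reduce the positive-characteristic statement, as much as possible, to the characteristic-zero arguments of the preceding sections, using the fact that all the key elements $E_i$, $E_{ij}$ live in $B_r^\Z(n)$. First I would recall, via Lemma \ref{lem:spangenk} and Proposition \ref{thm:eijgenk}, that over a field $K$ of characteristic $\neq 2$ the kernel $\ker(\nu)$ is already generated as an ideal by the elements $E_{ij}$ and $E_{ij}^*$. So it suffices to show that, under the hypothesis $p>2(n+1)$, each $E_{ij}$ and $E_{ij}^*$ lies in the ideal $\langle E\rangle=\langle E_\half\rangle$. The chain of inclusions that achieves this in characteristic zero is: $\langle E_{ij}\rangle\subseteq\langle E_k\rangle$ for a suitable $k\leq\half$ (Corollary \ref{cor:eijinei}), and then $\langle E_0\rangle\subseteq\langle E_1\rangle\subseteq\dots\subseteq\langle E_\half\rangle=\langle E\rangle$ (the statement \eqref{eq:contains} in the proof of Theorem \ref{thm:main}). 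I would check that every step of both of these arguments survives base change to $K$ once $p$ is large enough.

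The second step is to audit exactly which denominators appear. The annihilation results (Theorem \ref{thm:ann}, Theorem \ref{thm:eloopzero}, Corollaries \ref{cor:ann}, \ref{cor:deijzero}) are identities in $B_r^\Z(n)$ after clearing denominators — more precisely, the computations show that certain $\Z$-linear combinations of diagrams vanish — so they specialise verbatim to any ring, and in particular give $e_\ell E_{ij}=0$ and hence $DE_{ij}=0$ for any diagram $D\in B_{i+j}(n)$ with a horizontal edge. The place where denominators genuinely matter is twofold: (a) the quasi-idempotent relations $E_i^2=i!(n+1-i)!E_i$ and $F_i^2=i!(n+1-i)!F_i$, and the identity $x=E_{i-1}F_{i-1}=(i-1)!(n-i)!E_{i-1}$ in the proof of \eqref{eq:contains}; (b) the representation-theoretic input of Lemma \ref{lem:lr}, i.e. that $\langle F_{ij}\rangle$ and $\langle F_k\rangle$ in $K\Sym_{i+j}$ are sums of matrix blocks $I_\lambda$ governed by the (dual) Pieri rule, together with the existence of $\alpha_i,\beta_i$ with $F_{i-1}=\alpha_iF_i\beta_i$. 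Both (a) and (b) are fine provided $K\Sym_m$ is semisimple for all $m\leq n+1$, equivalently $p>n+1$; and the factorials $i!(n+1-i)!$, $(i-1)!(n-i)!$ that one must invert to recover $E_{i-1}$ from a multiple of it all divide $(n+1)!$, so they are units as soon as $p>n+1$. The constants $c_i(j)$, $c_{ij}(k)$ themselves are products of factorials of integers $\leq n+1$, hence units under the same hypothesis, so the elements $E_i$, $E_{ij}$ are well-defined as stated and agree with the $\pm1$-combinations of diagrams coming from $b(S,S',\beta)$.

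The reason the paper asks for $p>2(n+1)$ rather than merely $p>n+1$ is the only remaining subtlety: in the proof of Corollary \ref{cor:eijinei} one passes through $F_{ij}E_{ij}$ being a \emph{nonzero} scalar multiple of $E_{ij}$, and that scalar (coming from the order of the relevant point stabiliser inside $\Sym(S)\times\Sym(S')\cong\Sym_{n+1}\times\Sym_{n+1}$, or equivalently from the square relations for $E_{ij}$) can be as large as a product of two factorials summing to a number of size up to $2(n+1)$; requiring $p>2(n+1)$ guarantees this scalar is invertible in $K$, so that $E_{ij}\in\langle F_k\rangle$-type manipulations can be inverted to conclude $E_{ij}\in\langle E_k\rangle$. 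Thus the plan is: (1) invoke Proposition \ref{thm:eijgenk} to reduce to showing $E_{ij},E_{ij}^*\in\langle E\rangle$; (2) reprove Corollary \ref{cor:eijinei} over $K$, checking that the only inverted quantities are products of factorials of integers $\le n+1$ together with one scalar of size $\le 2(n+1)$, all units since $p>2(n+1)$; (3) reprove \eqref{eq:contains} over $K$, observing that $\alpha_i,\beta_i$ from Lemma \ref{lem:lr} exist because $K\Sym_m$ is semisimple for $m\le n+1$, and that the recovered scalar $(i-1)!(n-i)!$ is a unit; conclude $\langle E\rangle=\langle E_0,\dots,E_\half\rangle=\ker(\nu)$. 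The main obstacle, and the step I would treat most carefully, is step (2): making fully explicit the largest integer one is forced to invert along the whole chain from $b(S,S',\beta)$ down to $E_\half$, and confirming that the bound is exactly $2(n+1)$ and not something larger (which would force a stronger hypothesis on $p$).
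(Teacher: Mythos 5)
Your approach is the same as the paper's: reduce via Lemma \ref{lem:spangenk} and Proposition \ref{thm:eijgenk} to showing $E_{ij},E_{ij}^*\in\langle E\rangle$, and then verify that the proofs of Corollary \ref{cor:eijinei} and of \eqref{eq:contains} survive base change to $K$. One correction to your accounting of where the hypothesis $p>2(n+1)$ enters, which is exactly the point you flag as needing care in your step (2): it is not about inverting a single scalar. The ideal-theoretic argument of Lemma \ref{lem:lr} used in Corollary \ref{cor:eijinei} (the Pieri-rule comparison of $\langle F_{ij}\rangle$ with $\langle F_k\rangle$, producing $x,y$ with $xF_ky=F_{ij}$) takes place in the group algebra $K\Sym_{i+j}$, and since $i\leq j\leq n+1$ with $i+j\geq n+1$, the index $i+j$ ranges up to $2(n+1)$; so the semisimplicity requirement is $p>i+j$, worst case $p>2(n+1)$. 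Your item (b), asserting that semisimplicity of $K\Sym_m$ for $m\leq n+1$ suffices, is therefore too weak, and your subsequent attribution of the $2(n+1)$ bound to the scalar relating $F_{ij}E_{ij}$ to $E_{ij}$ is off target: that scalar is $i!\,j!$ with $i,j\leq n+1$, already a unit once $p>n+1$. With that correction, your plan goes through exactly as in the paper (the chain \eqref{eq:contains} itself only involves $K\Sym_{n+1}$, so it imposes no further constraint).
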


\begin{proof}
The proofs of Corollary \ref{cor:eijinei} and of \eqref{eq:contains}
involve computations in the group algebra $K\Sym_{i+j}$. But if the
characteristic of $K$ is greater than $i+j$, this algebra is semisimple,
and hence the arguments in those proofs apply without change.
The result follows.
\end{proof}

\begin{remark}
It is likely that the conclusion of the above theorem is valid for any characteristic
other than two.
\end{remark}

\subsection{The quantum case}

Let $U_q:=U_q(o_n)$ be the smash product of the quantised enveloping algebra corresponding to
the complex Lie algebra $so_n(\C)$ with the group algebra of $\Z_2$ (see \cite[\S 8]{LZ1}).
Let $\CC_q$ be the category of finite
dimensional type $(1,1,\dots,1)$ representations of  $U_q$.
Using Lusztig's integral form \cite{Lu}
of $U_q$ and lattices in the simple $U_q$-modules, we have a specialisation
functor $S:M_q\mapsto M$ taking modules $M_q$ in $\CC_q$ to their `classical limit'.

Let $V_q$ be the `natural' representation
of the quantum group $U_q(o_n)$, that is, the
representation which corresponds to the natural representation $V$
of $O_n$ under the specialisation above.
It is well known (cf., e.g. \cite{LZ1}) that there is a surjective
homomorphism $\psi:\C\CB_r\lr\End_{U_q(o_n)}(V_q^{\ot r})$,
where $\CB_r$ is the $r$-string braid group, acting through the
generalised $R$-matrices.

Moreover this action factors through $BMW_r(q)=BMW_r(q^{2(1-n)}, q^2-q^{-2})$,
the Birman-Murukami-Wenzl
algebra over $\C(q)$ with the indicated parameters.
The specialisation
at $q=1$ (see \cite[Lemma 4.2]{LZ2}) of $BMW_r(q)$ is $B_r(n)$.
It follows from the results of \cite{LZ1,LZ2,LZ3}, that we have a
commutative diagram of specialisations as depicted below,
which compare the classical case, treated above, with the quantum case.

\be\label{eq:commdiag}
\begin{CD}
0 @>>> \ker(\psi) @>>>BMW_r(q) @>\psi>>\End_{U_q}(V_q^{\ot r}) @>>>0\\
@. @VVSV @VVSV @VVSV\\
0 @>>> \ker(\nu) @>>>B_r(n) @>\nu>>\End_{U_q}(V_q^{\ot r}) @>>>0\\
\end{CD}
\ee

This diagram naturally leads to the

\noindent{\bf Conjecture.} In the above notation, there is an element
$\Phi_q\in BMW_r(q)$ such that $\ker(\psi)=\langle \Phi_q\rangle$. The
specialisation at $q=1$ of $\Phi$ is $E$.

This was proved for the case $n=3$ in \cite[Theorem 2.6]{LZ3},
where an explicit formula was given for $\Phi_q$.

There is another way to generalise the result \cite[Theorem 2.7]{LZ3},
which is the case $n=3$ of our current work. It was shown in \cite{LZ1}
that the map $\C\CB_r\lr\End_{U_q(so_2)}(V_{d,q}^{\ot r})$ is surjective, where
$V_{d,q}$ is the $q$-analogue of the $d$-dimensional representation of
$U_q(so_2)$. It is natural to ask for presentations of finite dimensional
algebras through which this map factors (cf. \cite{Lu}). This is not likely to be
straightforward, because in this case, the generators satisfy a polynomial
equation of degree $d$.

\end{document}